\DeclarePairedDelimiter\abs{\lvert}{\rvert}
\definecolor{mycolour}{rgb}{0,0.6,0.6}
\DeclareMathOperator{\Id}{Id}
\DeclareMathOperator{\particular}{{part}}
\DeclareMathOperator{\homogeneous}{{hom}}
\DeclareMathOperator{\Span}{{span}}
\DeclareMathOperator{\diag}{{diag}}
\newtheorem{theorem}{Theorem}
\newtheorem{lemma}[theorem]{Lemma}
\newtheorem{cor}[theorem]{Corollary}
\newtheorem{definition}[theorem]{Definition}
\newtheorem{remark}[theorem]{Remark}
\numberwithin{equation}{section}
\numberwithin{theorem}{section}
\renewcommand*{\Re}{\operatorname{Re}}
\renewcommand*{\Im}{\operatorname{Im}}
\newcommand{\R}{\mathbb{R}}
\renewcommand{\i}{\mathrm{i}\mkern1mu}
\newcommand{\N}{\mathbb{N}}
\newcommand{\Z}{\mathbb{Z}}
\newcommand{\p}{\partial}
\renewcommand{\epsilon}{\varepsilon}
\newcommand{\nm}{\noalign{\smallskip}}
\newcommand{\ds}{\displaystyle}
\title{Asymptotic Floquet theory for first order ODEs with finite Fourier series perturbation  and its applications to Floquet metamaterials\thanks{\footnotesize
This work was supported in part by the Swiss National Science Foundation grant number
200021--200307.}}
\author{Habib Ammari\thanks{\footnotesize Department of Mathematics, ETH Z\"urich, R\"amistrasse 101, CH-8092 Z\"urich, Switzerland (habib.ammari@math.ethz.ch, thea.kosche@sam.math.ethz.ch).} \and Erik Orvehed Hiltunen \thanks{\footnotesize Department of Mathematics, Yale University, 51 Prospect Street, New Haven CT 06511, USA  (erik.hiltunen@yale.edu).}  \and Thea Kosche\footnotemark[2]}
\date{}
\begin{document}
	\maketitle
	
\begin{abstract}
Our aim in this paper is twofold. Firstly, we develop a new asymptotic theory for Floquet exponents. We consider a linear system of differential equations with a time-periodic coefficient matrix. Assuming that the coefficient matrix depends analytically on a small parameter, we derive a full asymptotic expansion of its Floquet exponents. Based on this, we prove that only the constant order Floquet exponents of multiplicity higher than one will be perturbed linearly. The required multiplicity can be achieved via folding of the system through certain choices of the periodicity of the coefficient matrix. Secondly, we apply such an asymptotic  theory for  the analysis of Floquet metamaterials. We provide a characterization of asymptotic exceptional points for a pair of subwavelength resonators with time-dependent material parameters. We prove that asymptotic exceptional points are obtained if the frequency components of the perturbations fulfill a certain ratio, which is determined by the geometry of the dimer of subwavelength resonators.
 \end{abstract}

\noindent{\textbf{Mathematics Subject Classification (MSC2000):} 35J05, 35C20, 35P20, 74J20
		
\vspace{0.2cm}
		
\noindent{\textbf{Keywords:}} asymptotic Floquet theory, subwavelength quasifrequency, time-modulation, metamaterial, exceptional point
	\vspace{0.5cm}

\tableofcontents

\section{Introduction}
In the past two decades, metamaterials have revolutionized our approaches to the control and manipulation of wave-matter interactions at deep subwavelength scales 
\cite{ammari2021functional,kadic20193d,lemoult2016soda,yves2017crytalline,phononic1,phononic2}. 
Metamaterials are micro-structured materials with subwavelength resonators as building blocks that exhibit a variety of exotic and useful properties.  
Subwavelength resonators are small objects that possess  \emph{subwavelength resonances} and strongly scatter waves with comparatively large wavelengths.
Due to the subwavelength nature of the resonances, systems of subwavelength resonators enable wave control on very small length scales compared to the operating wavelengths. Most notably, subwavelength wave manipulations  such as subwavelength guiding, trapping, and focusing of waves, super-resolution imaging, and cloaking can be achieved by means of metamaterials \cite{review,review2,ammari2018subwavelength,ammari2015superresolution,lemoult2016soda,yves2017crytalline,milton2006cloaking,pendry2000negative,smith2004metamaterials}.
A fundamental property of subwavelength resonators is that their material parameters differ greatly from the background medium. In order to investigate their subwavelength resonances, a capacitance matrix formalism describing the properties of  systems of subwavelength resonators has been introduced and thoroughly discussed in  \cite{ammari2020biomimetic,ammari2019cochlea,ammari2021functional,ammari2019double}. The capacitance matrix formulation is a {\em discrete approximation} of the continuous model. It yields a mathematical foundation of metamaterials \cite{ammari2020exceptional,ammari2020high,ammari2019double,ammari2017effective,ammari2019bloch}.

Recently, the field of metamaterials has experienced tremendous advances by exploring the novel and promising area of time-modulations. Research on {\em Floquet metamaterials} (or time-modulated metamaterials) aims to explore new phenomena arising from the temporal modulation of the material parameters of the subwavelength resonators.  It has enabled to open new paradigms for the manipulation of wave-matter interactions in both spatial and temporal domains  \cite{ammari2020time,fleury2016floquet,rechtsman2013photonic,
koutserimpas2020electromagnetic,koutserimpas2018zero,koutserimpas2018zero,
wilson2019temporally,wilson2018temporal}. Moreover, recent advances in metamaterials have demonstrated the existence of {\em exceptional points} in systems of subwavelength resonators \cite{ammari2020high,ammari2020exceptional,heiss2012physics}. Exceptional points are parameter values at which the system's eigenvalues and their associated eigenvectors simultaneously coincide \cite{heiss2012physics}. Such points have a variety of applications, most notably to enhanced sensing \cite{ammari2020exceptional,ammari2020high,hodaei2017enhanced}.

Due to this variety of applications of exceptional points, the main motivation for this work is to provide a tool to detect exceptional points, in particular, to detect them for systems of time-modulated subwavelength resonators. To this end, an asymptotic Floquet theory will be developed. 
Indeed, Floquet's theory states that the fundamental solution of an ordinary differential equation (ODE)
$$\frac{dx}{dt} = A(t)x,$$
with $T$-periodic continuous coefficient matrix $A(t)$, can be decomposed into a $T$-periodic part $P(t)$ and a part $\exp(Ft)$, where $F$ is a complex matrix and is called \emph{Floquet's exponent matrix}. That is, the fundamental solution $X$ can be written as
$$ X(t) = P(t)\exp(Ft).$$
This allows to prove that a parameterized system ${dx}/{dt} = A(c,t)x$ is posed at an exceptional point if and only if its associated Floquet's exponent matrix is non-diagonalizable. In order to analyze further the Floquet's exponent matrix, we will consider a parameterized system ${dx}/{dt} = A_\epsilon(c,t)x$, where $c$ is a parameter and $A_\epsilon$ is an analytic function of $\epsilon$ in a neighborhood of $0$ and has the asymptotic expansion
$$A_\epsilon(c,t) =  A_0(c) + \epsilon A_1(c,t) + \epsilon^2A_2(c,t) + \ldots$$
as $\epsilon \rightarrow 0$ with $A_0(c)$ being diagonal and constant in time and $A_n(c,t)$ having a finite Fourier series with respect to time for all $n \geq 1$. We are interested in detecting those parameters $c$, where ${dx}/{dt} = A_\epsilon(c,t)x$ is posed at an exceptional point for small $\epsilon>0$. To this end an asymptotic analysis of the Floquet's exponent matrix or at least of its eigenvalues, the so-called \emph{Floquet exponents}, is needed.
Usual approximations of Floquet exponents are obtained using the techniques of Magnus expansion or Dyson series, or by simply integrating the system across one period; see, for instance, \cite{magnus1,magnus2,dyson,magnus3}. Unfortunately, these methods do not allow for detecting possible exceptional points in a parametrized system of ODEs. Nevertheless, the theory developed in this paper allows for investigating the system for existence of exceptional points and for detecting those parameter values $c_0$ at which the system is posed at an exceptional point. Indeed, this theory allows to classify asymptotic first order exceptional points in the setting of the classical harmonic oscillator and in the setting of a dimer of time-modulated subwavelength resonators. Furthermore, it gives some insights into the dependence of Floquet exponents on the parameters of the corresponding system. 

In Section \ref{Chapter:Floquet_theory}, Floquet's theory and its main results is  introduced. A general scheme for obtaining an asymptotic expansion of Floquet's exponent matrices for general systems ${dx}/{dt} = A_\epsilon(t)x$ is presented. This scheme is then applied in Section \ref{Chapter:Asymptotic_Floquet_theory} to the system presented above. It provides formulas for Floquet's exponent matrix. Those are then used for the case of the classical harmonic oscillator presented in Section \ref{Harmonic_oscillator}. In Section \ref{asymptotic_analysis_of_floquet_exponents}, asymptotic analysis of Floquet exponents is established using the asymptotic formulas for the Floquet's exponent matrix from Section \ref{Chapter:Asymptotic_Floquet_theory}. This asymptotic analysis allows for a full classification of the first order asymptotic exceptional points in the classical harmonic oscillator setting; see Section \ref{sec:exceptional_points}. In Section \ref{ch:metamaterials}, the developed theory is then applied to the original motivation: time-modulated subwavelength resonators. It is used to analyze the Floquet's exponent matrices and Floquet exponents of a dimer of time-modulated resonators and leads to a classification of its associated first order asymptotic exceptional points. Other applications of the theory developed in this paper include the analysis of unidirectional guiding of waves and the valley Hall effect in periodic time-modulated subwavelength structures \cite{jinghao}. The paper ends with some concluding remarks in Section \ref{concluding_remarks}. 

\section{Floquet's and Lyapunov's reduction theorem} \label{Chapter:Floquet_theory}

In this section, we first recall the Floquet theorem and the Lyapunov reduction theorem. We refer the reader to \cite{Teschl} for their proofs. 

\subsection{Statements of the theorems}
Let $A: \mathbb{R} \rightarrow \mathbb{C}^{N \times N}$ be a $T$-periodic matrix valued function and $x_0 \in \mathbb{C}^N$ a $N$-dimensional vector. Throughout this paper we assume that $A$ is of class $C^0$. 
Considering the linear system of ordinary differential equations (ODEs) of the form
 \begin{equation} \label{basic_equation1}
 \left\{
    \begin{aligned}
        \frac{dx}{dt} &= A(t)x,\\
        x(0) &= x_0,
    \end{aligned}
    \right.
\end{equation}
we are interested in the structure of the fundamental solution $X$, which is uniquely characterized by
\begin{equation}   \label{fundamental_solution_equation}
   \left\{ \begin{aligned}
        \frac{dX}{dt} &= A(t)X, \\
        X(0) &= \Id_{N\times N},
    \end{aligned}
    \right.
    \end{equation}
since then the solution to \eqref{basic_equation1} will be given by
    \begin{align*}
        x(t) = X(t)x_0.
    \end{align*}

Floquet's theory elucidates the nature and structure of the fundamental solution of such a periodic system and it states the following theorem.

\begin{theorem}[Floquet's theorem]\label{Floquet-Bloch_theorem}
    The fundamental solution of the  $T$-periodic linear system of ODEs \eqref{basic_equation1} is given by
        \begin{align}
            X(t) = P(t)\exp(Ft),
        \end{align}
    for some $T$-periodic matrix-valued function $P: \mathbb{R} \rightarrow \mathbb{C}^{N \times N}$ and for a constant matrix $F \in \mathbb{C}^{N \times N}$.
\end{theorem}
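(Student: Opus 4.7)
The plan is to exploit the periodicity of $A$ to relate $X(t+T)$ to $X(t)$ via a constant matrix, and then absorb that constant matrix into an exponential to exhibit the claimed product structure. Three ingredients will be needed: invertibility of $X(t)$, uniqueness for matrix ODEs, and the existence of a matrix logarithm for any invertible matrix.

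First I would establish that $X(t)$ is invertible for every $t \in \R$. By Liouville's formula, $\det X(t) = \exp\bigl(\int_0^t \mathrm{tr}\, A(s)\,ds\bigr)$, which is nonzero for all $t$; so in particular the monodromy matrix $X(T)$ is invertible. Next I would introduce the time-shifted matrix $Y(t) := X(t+T)$ and observe that, thanks to the $T$-periodicity of $A$,
\begin{equation*}
    \frac{dY}{dt}(t) = \frac{dX}{dt}(t+T) = A(t+T)X(t+T) = A(t)Y(t),
\end{equation*}
so $Y$ solves the same matrix ODE as $X$. The matrix $t \mapsto X(t)X(T)$ also solves that ODE (right multiplication by a constant matrix preserves solutions), and both $Y$ and $X(\cdot)X(T)$ take the value $X(T)$ at $t=0$. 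By uniqueness for linear matrix ODEs, I obtain the key identity
\begin{equation*}
    X(t+T) = X(t)\, X(T) \qquad \text{for all } t \in \R.
\end{equation*}

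Since $X(T) \in \CC^{N \times N}$ is invertible, a standard result from linear algebra (proved via the Jordan normal form, or equivalently by the holomorphic functional calculus applied to a branch of the logarithm defined on a simply connected neighborhood of the spectrum of $X(T)$) guarantees the existence of a matrix $F \in \CC^{N\times N}$ such that $\exp(FT) = X(T)$. I would then define
\begin{equation*}
    P(t) := X(t)\exp(-Ft)
\end{equation*}
and verify the two required properties. First, continuity and differentiability of $P$ follow from those of $X$ and $\exp(-F\,\cdot\,)$. Second,
\begin{equation*}
    P(t+T) = X(t+T)\exp\bigl(-F(t+T)\bigr) = X(t)X(T)\exp(-FT)\exp(-Ft) = X(t)\exp(-Ft) = P(t),
\end{equation*}
using that $X(T) = \exp(FT)$ commutes with $\exp(-FT)$. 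Rearranging the definition of $P$ gives $X(t) = P(t)\exp(Ft)$, which is the claimed decomposition.

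The main obstacle is the existence of the logarithm $F$ of $X(T)$; this is not difficult but is the one non-trivial input, and in particular $F$ is not unique (different branches yield different $F$'s, all differing by integer multiples of $2\pi\i/T$ on the eigenvalue level), which will matter later when these eigenvalues (the Floquet exponents) are interpreted. The remaining steps are routine consequences of uniqueness for linear ODEs and the periodicity hypothesis on $A$.
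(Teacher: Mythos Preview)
Your argument is correct and is the standard proof of Floquet's theorem: establish the monodromy relation $X(t+T)=X(t)X(T)$ via uniqueness, take a matrix logarithm of the invertible $X(T)$ to define $F$, and verify that $P(t):=X(t)\exp(-Ft)$ is $T$-periodic. The paper itself does not give a proof of this statement; it simply refers the reader to Teschl, where exactly this argument appears, so your approach is essentially the one the cited reference would supply.
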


The structure $X(t) = P(t)\exp(Ft)$ of the fundamental solution can be interpreted as separating the \emph{local and $T$-periodic part} of the fundamental solution $P(t)$ from its \emph{global, non-periodic behavior} $\exp(Ft)$. Here, some care needs to be taken, since $P(t)$ and $F$ are not unique and any matrix $R$ which satisfies $\exp(RT) = \Id_{N \times N}$ and $[F,R]=0$, i.e., $FR = RF$, gives rise to another decomposition of the fundamental solution $X$. In fact,
    \begin{align*}
        X(t) = (P(t)\exp(Rt))\exp((F-R)t)
    \end{align*}
is also a decomposition of the fundamental solution $X$, where $t \mapsto P(t)\exp(Rt)$ is the $T$-periodic part and $F-R \in \mathbb{C}^{N \times N}$ the associated constant matrix. Depending on the context of the system of equations \eqref{basic_equation1}, different choices of $F$ and $P(t)$ are more suitable than others.

Another way to understand Theorem \ref{Floquet-Bloch_theorem} is via a reformulation given by the following theorem due to Lyapunov. There, the $T$-periodic matrix-valued function $P$ is interpreted as a transformation of \eqref{basic_equation1} which reduces it to a constant system with constant coefficient matrix $F$.

\begin{theorem}[Lyapunov's reduction theorem]
    Given the system of linear $T$-periodic ODEs \eqref{basic_equation1}, there exists an invertible matrix-valued function $P: \mathbb{R} \rightarrow \mathbb{C}^{N \times N}$ and a constant matrix $F \in \mathbb{C}^{N \times N}$ such that the substitution
    \begin{align}
        x(t) = P(t)y(t),
    \end{align}
    reduces the time-dependent system of ODEs \eqref{basic_equation1} to the constant system of ODEs
    \begin{equation}
    \left\{
    \begin{aligned}
        \frac{dy}{dt} = Fy,\\
        y(0) = x_0.
    \end{aligned}
    \right.
    \end{equation}
\end{theorem}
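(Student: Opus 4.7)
The plan is to deduce Lyapunov's reduction theorem directly from Floquet's theorem (Theorem \ref{Floquet-Bloch_theorem}), which has already been stated. Essentially, the same matrix $P(t)$ that appears in the decomposition $X(t)=P(t)\exp(Ft)$ of the fundamental solution will serve as the change of variables, and $F$ will be the same constant matrix.

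First I would set $P(t) := X(t)\exp(-Ft)$ using the Floquet factorization of the fundamental solution. I would observe that $P$ is $T$-periodic by Theorem \ref{Floquet-Bloch_theorem}, and that $P(0) = X(0) = \Id_{N\times N}$. Because $X(t)$ is a fundamental solution (so invertible for all $t$) and $\exp(-Ft)$ is invertible, $P(t)$ is invertible for every $t \in \mathbb{R}$, which is needed for the substitution to make sense globally.

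Next I would verify that the substitution $x(t)=P(t)y(t)$ turns \eqref{basic_equation1} into $dy/dt = Fy$. Differentiating the identity $P(t)\exp(Ft) = X(t)$ yields
\begin{align*}
\frac{dP}{dt}(t) = A(t)P(t) - P(t)F.
\end{align*}
Plugging $x=Py$ into $dx/dt=A(t)x$ then gives $P(t)\,dy/dt = P(t)Fy(t)$, and invertibility of $P(t)$ lets us cancel it to obtain $dy/dt = Fy$. The initial condition $y(0)=x_0$ follows from $P(0)=\Id_{N\times N}$.

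There is not really a hard step here, since Theorem \ref{Floquet-Bloch_theorem} does all the work; the only mild care needed is to confirm that $P(t)$ remains invertible for every $t$ (not just almost everywhere) so that the reduced ODE is genuinely equivalent to the original one. Alternatively, I could present the result in the opposite direction: assume the reduction exists and check that setting $X(t)=P(t)\exp(Ft)$ recovers Floquet's factorization, making the two theorems equivalent restatements of the same fact. I would choose the first presentation, since Theorem \ref{Floquet-Bloch_theorem} is already available in the text.
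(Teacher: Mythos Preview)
Your proposal is correct and matches the paper's perspective: the paper does not give an explicit proof of this statement but refers the reader to \cite{Teschl}, while remarking that Lyapunov's theorem is simply a reformulation of Floquet's theorem in which $P$ is reinterpreted as a change of variables. Your derivation from Theorem~\ref{Floquet-Bloch_theorem} via $P(t)=X(t)\exp(-Ft)$ is exactly this reformulation made explicit, and the care you take with invertibility of $P(t)$ and the initial condition is appropriate.
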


In the following, we seek to develop methods for obtaining such a decomposition and in particular a Floquet exponent matrix $F$ corresponding to some decomposition of the fundamental solution $X(t) = P(t)\exp(Ft)$. The eigenvalues of such a Floquet exponent matrix $F$ will give insights on possible global behaviors of solutions 
of \eqref{basic_equation1}. They are the so-called \emph{Floquet exponents} of
 the system of  ODEs \eqref{basic_equation1} and they are uniquely defined modulo $2 \pi i/T$.

The usual and simplest procedure to obtain the Floquet exponent matrix or the Floquet exponents is to numerically solve the initial value problem \eqref{fundamental_solution_equation}  for the fundamental solution $X$ and then to evaluate $X$ after one period $X(T) = P(T)\exp(FT)$, which is equal to $\exp(FT)$, due to the $T$-periodicity of $P$. The eigenvalues of $\exp(FT)$ will then allow to deduce the Floquet exponents. However, this approach is only applicable to a concrete system with fixed parameters and inapplicable when one is interested in a more general setting or a class of parametrized ODEs. In this paper, we are interested in the dependence of a Floquet decomposition of the fundamental solution of the system ${dx}/{dt} = A_{\epsilon}(t)x$, where $A_\epsilon$ depends analytically on $\epsilon$ in a neighborhood of $0$. To this end, a differential equation for $P(t)$ and $F$ in terms of $A_{\epsilon}(t)$ will be useful.

\subsection{Differential equation for Floquet exponent matrix and Lyapunov transformation}\label{Diff_equation_for_Floquet_exp_mat_and_Lyapunov_trans}

Given \eqref{fundamental_solution_equation} 
for some $T$-periodic matrix-valued function $A: \mathbb{R} \rightarrow \mathbb{C}^{N \times N}$, Floquet's Theorem \ref{Floquet-Bloch_theorem} states that there exists a decomposition $X(t) = P(t)\exp(Ft)$ with some $T$-periodic matrix-valued function $P: \mathbb{R} \rightarrow \mathbb{C}^{N \times N}$ and a constant matrix $F \in \mathbb{C}^{N \times N}$. Fixing such a function $P$ and the corresponding matrix $F$, one can derive the following equation in $P$ and $F$:
    \begin{align*}
        \frac{dP}{dt}(t) \exp(Ft) + P(t)F\exp(Ft) = A(t)P(t)\exp(Ft),
    \end{align*}
which leads to the equation
    \begin{align*}
        \frac{dP}{dt}(t) = A(t)P(t) - P(t)F.
    \end{align*}
In order to find a decomposition of the fundamental solution $X(t) = P(t)\exp(Ft)$ of the system of ODEs \eqref{basic_equation1}, it thus suffices to solve the following system of equations:
\begin{equation} \label{Floquet_decomposition_system_equation_1}
\left\{
    \begin{aligned}
        \frac{dP}{dt}(t) &= A(t)P(t) - P(t)F,\\
        &P: \mathbb{R} \rightarrow \mathbb{C}^{N \times N} \text{ a $T$-periodic function},\\
        &P(0) = \Id_{N \times N},\\
        &F \in \mathbb{C}^{N \times N}.
    \end{aligned}
    \right.
    \end{equation}

 Unfortunately, the system \eqref{Floquet_decomposition_system_equation_1} is too general to allow for an explicit procedure for finding a Floquet decomposition. This is also due to the lack of uniqueness of a solution. It would be of use to us to have an explicit procedure for \emph{finding} a decomposition and also for \emph{choosing} a suitable decomposition.

To this end, suppose that $A_\epsilon(t) = A_0(t) + \epsilon A_1(t) + \epsilon^2 A_2(t) + \ldots$ is an analytic function in $\epsilon$ for $\epsilon$ in a neighborhood of $0$, where $A_n(t)$ are $T$-periodic matrix-valued functions. We consider the following system parametrized by $\epsilon$:
\begin{equation}
\left\{
    \begin{aligned}
        \frac{dX_\epsilon}{dt} &= A_\epsilon(t)X_\epsilon, \\
            X_\epsilon(0) &= \Id_{N\times N}.
    \end{aligned}
    \right.
    \end{equation}
Under suitable assumptions (made precise in \Cref{Inductive_Identity_for_Lyapunov-Floquet_decomposition}), $X_\epsilon$ has an associated Floquet decomposition $X_\epsilon(t) = P_\epsilon(t)\exp(F_\epsilon t)$, which also depends analytically on $\epsilon$. Writing $P_\epsilon(t) = P_0(t) + \epsilon P_1(t) + \epsilon^2 P_2(t) + \ldots$ and $F_\epsilon = F_0 + \epsilon F_1 + \epsilon F_2 + \ldots$ for the respective analytic expansions, one deduces the following equations from \eqref{Floquet_decomposition_system_equation_1}:
    \begin{align}\label{Floquet_decomposition_equation_general_order}
        \frac{dP_n}{dt}(t) &= \sum_{i = 0}^n(A_{i}(t)P_{n-i}(t) - P_{n-i}(t)F_{i}) \text{ for } n \geq 0
    \end{align}
with the corresponding conditions 
\begin{equation} \label{Floquet_decomposition_equation_initial_condition_P_0}
    \left\{\begin{aligned}
        &P_0(0) = \Id_{N \times N},\\ 
        &P_n(0) = 0 \text{ for all } n \geq 1,\\
        &P_n: \mathbb{R} \rightarrow \mathbb{C}^{N \times N} \text{ is a $T$-periodic function for all } n \geq 0 \text{ and }\\
        &F_n \in \mathbb{C}^{N \times N} \text{ for all } n \geq 0.
    \end{aligned}
    \right.
    \end{equation}
It is worth noticing that the equation for $P_n(t)$ in the system of equations \eqref{Floquet_decomposition_equation_general_order} only depends on lower orders, that is, on $P_{n-1}(t), \ldots, P_0(t)$, which allows for an inductive solution procedure. Furthermore, the homogeneous part of every equation \eqref{Floquet_decomposition_equation_general_order}
is given by the \emph{same} linear differential equation, namely
    \begin{align}\label{General_homogeneous_equation_for_P_n}
        \frac{dY}{dt}(t) = A_{0}(t)Y(t) - Y(t)F_{0}.
    \end{align}
Furthermore, the inhomogeneous part of the $n$th equation is given by
    \begin{align}\label{General_inhomogeneous_part_of_equation_for_P_n}
        A_{n}(t)P_{0}(t) - P_{0}(t)F_{n} + \sum_{i = 1}^{n-1}(A_{i}(t)P_{n-i}(t) - P_{n-i}(t)F_{i}),
    \end{align}
which only depends on the orders of $P_{\epsilon}(t)$ and $F_\epsilon$ that are strictly smaller than $n$ and on $F_n$. Consequently, it suffices to determine all solutions $P^{\homogeneous}$ of the homogeneous equation \eqref{General_homogeneous_equation_for_P_n} and to inductively solve for a particular solution $P_{n,F_n}^{\particular}$ of the inhomogeneous part \eqref{General_inhomogeneous_part_of_equation_for_P_n}, which will be a particular solution in terms of $F_n$. In order to determine all possible $F_n$, the  conditions \eqref{Floquet_decomposition_equation_initial_condition_P_0} need to be fulfilled by
$$P^{\homogeneous} + P^{\particular}_{n,F_n},$$
for some solution $P^{\homogeneous}$ of the homogeneous ODE \eqref{General_homogeneous_equation_for_P_n} and some choice of $F_n$. Such a choice of $F_n$ will then give the $n$th order term of $F_\epsilon$ and the corresponding $n$th order term of $P_\epsilon(t)$; $P_n(t) = P^{\homogeneous}(t) + P^{\particular}_{n,F_n}(t)$. 

The following section elaborates such an inductive procedure more concretely in the case where the leading order term $A_0$ is constant and all higher order terms $A_n(t)$ are matrix-valued functions with finite Fourier series. It will turn out that the choice of $F_0$ uniquely determines the solution to the whole system. That is, the choice of $F_0$ uniquely determines the Lyapunov transformation $P_\epsilon(t)$ and all higher order terms of $F_\epsilon$.

\section{Asymptotic Floquet theory} \label{Chapter:Asymptotic_Floquet_theory}

This section will present an asymptotic approach for the resolution of \eqref{Floquet_decomposition_system_equation_1} by making use of \eqref{Floquet_decomposition_equation_general_order}--\eqref{Floquet_decomposition_equation_initial_condition_P_0} and by assuming that the leading order of $A_\epsilon$ is constant and diagonal, that is, $A_0(t) \equiv A_0$ is a constant and diagonal matrix. Under suitable analyticity assumptions on $A_\epsilon$ and on its Taylor coefficients with respect to $\epsilon$, an inductive procedure for the resolution of \eqref{Floquet_decomposition_equation_general_order}--\eqref{Floquet_decomposition_equation_initial_condition_P_0} is presented. 
At the end of this section, exact formulas for $F_0$, $F_1$ and some particular coefficients of $F_2$ are derived. Those allow for asymptotic formulas for the Floquet exponents in terms of $A_0$, $A_1$ and $A_2$.

In this section, the classical harmonic oscillator with periodically modulated damping and restoring force is considered. It  serves as an illustrative example and  allows to demonstrate how the different formulas can be applied. It is introduced in Section \ref{Harmonic_oscillator} and then subsequently used as a simple application example.

\subsection{Inductive solution scheme for Lyapunov's transformation and Floquet's exponent matrix}
The following theorem holds. 
\begin{theorem}[Inductive identity for Lyapunov-Floquet decomposition]\label{Inductive_Identity_for_Lyapunov-Floquet_decomposition}
    Let \begin{equation} \label{expA}
    A_\epsilon(t) = A_0 + \epsilon A_1(t) + \epsilon^2A_2(t) + \ldots \in \mathbb{C}^{N \times N} \end{equation} be a $T$-periodic continuous matrix-valued function, which depends analytically on $\epsilon$ in a neighborhood of $0$. Furthermore, suppose  that
        \begin{enumerate}
            \item[(i)] The constant order Taylor coefficient $A_0$ is diagonal and constant;
            \item[(ii)] Any other Taylor coefficient $A_n(t)$ has a finite Fourier series;
            \item[(iii)] The series in (\ref{expA}) is convergent for $|\epsilon| < r_0$, where $r_0$ is independent of $t$.
        \end{enumerate}
    Then there exists an analytically dependent Floquet-Lyapunov decomposition of 
                ${dx}/{dt} = A_\epsilon(t) x$, which is given by
        \begin{align*}
            F_\epsilon &= F_0 + \epsilon F_1 + \epsilon^2F_2 + \ldots,\\
            P_\epsilon(t) &= P_0(t) + \epsilon P_1(t) + \epsilon^2 P_2(t) + \ldots,
        \end{align*}
    where the leading order terms are uniquely defined by
        \begin{align*}
            F_0 = A_0 \mod \diag\left(\frac{2\pi \i}{T}\mathbb{Z}, \ldots, \frac{2\pi \i}{T}\mathbb{Z}\right), \\
            \Im(F_0) \in \diag \left( (-\frac{\pi}{T},\frac{\pi}{T}], \ldots, (-\frac{\pi}{T}, \frac{\pi}{T}] \right),
        \end{align*}
        and 
        \begin{align*}
            P_0(t) = \exp((A_0 -F_0)t).
        \end{align*}
    The higher order terms are uniquely determined by the above (and in fact any feasible) choice of $F_0$. They are inductively given by the Fourier coefficients of 
        \begin{align*}
            \sum_{i = 1}^{n-1}(A_i(t)P_{n-i}(t) - P_{n-i}(t)F_i) + A_n(t)P_0(t) = \sum_{m \in \mathbb{Z}}\exp(\frac{2 \pi \i}{T}mt)\Phi^m
        \end{align*}
    via the formulas
        \begin{align}\label{Inductive_formula_F_n}
            (F_n)_{kl} = \begin{cases}
                \ds (\Phi^{\frac{T}{2\pi \i}(A_0-F_0)_{kk}})_{kl} &\text{if } (F_0)_{kk} = (F_0)_{ll},\\
                \nm \ds
                ((F_0)_{ll} - (F_0)_{kk})\sum_{m \in \mathbb{Z}}\frac{(\Phi^m)_{kl}}{\frac{2 \pi \i}{T}m - (A_0)_{kk} + (F_0)_{ll}} &\text{if } (F_0)_{kk} \not= (F_0)_{ll},
            \end{cases}
        \end{align}
        and 
        \begin{align}\label{Inductive_formula_P_n}
            (P_n(t))_{kl} = \ds \sum_{m \in \mathbb{Z}} \exp(\frac{2\pi \i}{T}mt)\begin{cases} \frac{(\Phi^m)_{kl}}{\frac{2\pi \i}{T}m - (A_0)_{kk} + (F_0)_{ll}} & \text{ if } \frac{2\pi \i}{T}m \not= (A_0- F_0)_{kk},\\
            \nm \ds
            -\sum_{\frac{2\pi \i}{T}m' \not= (A_0- F_0)_{kk}} \frac{(\Phi^{m'})_{kl}}{\frac{2\pi \i}{T}m' - (A_0)_{kk} + (F_0)_{ll}} &\text{ if } \frac{2\pi \i}{T}m = (A_0- F_0)_{kk}.
            \end{cases}
        \end{align}
        \end{theorem}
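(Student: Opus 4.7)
The plan is to proceed by strong induction on $n$, using the triangular cascade of equations \eqref{Floquet_decomposition_equation_general_order}--\eqref{Floquet_decomposition_equation_initial_condition_P_0} derived in Section \ref{Diff_equation_for_Floquet_exp_mat_and_Lyapunov_trans}. For the base case $n=0$, the ODE $dP_0/dt = A_0 P_0 - P_0 F_0$ with $P_0(0)=\Id_{N\times N}$ and diagonal $A_0$ admits the explicit solution $P_0(t) = \exp((A_0 - F_0)t)$ whenever $F_0$ is chosen diagonal (hence commuting with $A_0$). Imposing $T$-periodicity is then equivalent to $(A_0-F_0)T \in \diag(2\pi\i\Z,\ldots,2\pi\i\Z)$, and the stated normalization on $\Im(F_0)$ selects a unique representative; in particular, $P_0$ has a finite (in fact monochromatic diagonal-by-diagonal) Fourier series.

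For the inductive step $n \geq 1$, I would rewrite \eqref{Floquet_decomposition_equation_general_order} as
\begin{align*}
\frac{dP_n}{dt}(t) = A_0 P_n(t) - P_n(t) F_0 + \Psi_n(t) - P_0(t) F_n,
\end{align*}
with $\Psi_n(t) := \sum_{i=1}^{n-1}(A_i(t) P_{n-i}(t) - P_{n-i}(t) F_i) + A_n(t) P_0(t)$ depending only on previously computed data. By hypothesis (ii) together with the inductive hypothesis that $P_0,\ldots,P_{n-1}$ have finite Fourier series, $\Psi_n(t) = \sum_m \exp(\i\omega m t)\Phi^m$ is also a finite Fourier sum, with $\omega := 2\pi/T$; moreover, since $P_0$ is diagonal, $(P_0(t) F_n)_{kl} = \exp((A_0-F_0)_{kk}t)(F_n)_{kl}$ contributes only at the single Fourier mode $m_* := (T/2\pi\i)(A_0-F_0)_{kk}\in\Z$. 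Projecting the $(k,l)$-entry of the ODE onto each Fourier mode $m$ yields the scalar equation
\begin{align*}
\bigl(\i\omega m - (A_0)_{kk} + (F_0)_{ll}\bigr)(P_n^m)_{kl} = (\Phi^m)_{kl} - (F_n)_{kl}\,\mathbf{1}[m=m_*].
\end{align*}
The normalization on $\Im(F_0)$ is essential: it confines every $\Im((F_0)_{jj})$ to a single fundamental domain of $\omega\Z$, so the prefactor on the left vanishes precisely when \emph{both} $(F_0)_{kk}=(F_0)_{ll}$ and $m=m_*$. In the degenerate case $(F_0)_{kk}=(F_0)_{ll}$, the compatibility condition at $m=m_*$ forces $(F_n)_{kl} = (\Phi^{m_*})_{kl}$---the first branch of \eqref{Inductive_formula_F_n}---while $(P_n^{m_*})_{kl}$ is free and pinned by the initial condition $P_n(0)=0$. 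In the non-degenerate case, every $(P_n^m)_{kl}$ is uniquely determined as an affine function of $(F_n)_{kl}$, and imposing $(P_n(0))_{kl}=0$ becomes a single scalar equation whose solution is the second branch of \eqref{Inductive_formula_F_n}; substituting back gives \eqref{Inductive_formula_P_n}. Since $\Phi^m$ is nonzero only for finitely many $m$, the same is true of $(P_n^m)_{kl}$, maintaining the inductive hypothesis.

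The hardest part is not the per-order linear algebra above, but establishing analytic dependence of the resulting $F_\epsilon$ and $P_\epsilon(t)$ on $\epsilon$---i.e.\ convergence of the formal series on a common disc $|\epsilon|<r$. For this I would appeal to analytic ODE theory: assumption (iii) guarantees that the fundamental solution $X_\epsilon(t)$ of $dX_\epsilon/dt = A_\epsilon(t)X_\epsilon$ is analytic in $\epsilon$ uniformly for $t\in[0,T]$, so the monodromy $X_\epsilon(T)$ is analytic, and a matrix logarithm may be chosen analytically near $\epsilon=0$ once a branch is fixed---precisely the role played by the normalization on $\Im(F_0)$. Setting $F_\epsilon := (1/T)\log X_\epsilon(T)$ and $P_\epsilon(t) := X_\epsilon(t)\exp(-F_\epsilon t)$ then produces an analytic Floquet--Lyapunov decomposition whose Taylor coefficients must coincide with those produced by the induction, by the uniqueness built into the scheme. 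This last step is where one genuinely leaves the algebraic framework of Section \ref{Chapter:Floquet_theory}.
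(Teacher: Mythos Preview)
Your proposal is correct and follows essentially the same route as the paper: verify the order-$0$ terms directly, solve the cascade \eqref{Floquet_decomposition_equation_general_order}--\eqref{Floquet_decomposition_equation_initial_condition_P_0} entrywise in Fourier modes to obtain \eqref{Inductive_formula_F_n}--\eqref{Inductive_formula_P_n}, and appeal to an external analyticity result for the convergence of the formal series. The paper's own proof is terser---it simply says to insert \eqref{Inductive_formula_F_n}--\eqref{Inductive_formula_P_n} into \eqref{Floquet_decomposition_equation_general_order} and check, and for analyticity it cites \cite{Yakubovich} after observing that the normalization on $\Im(F_0)$ prevents distinct eigenvalues of $F_0$ from being congruent modulo $2\pi\i/T$; your sketch via the analytic monodromy and a branch of the matrix logarithm is precisely the content of that citation.
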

    \begin{proof}
    Firstly, from the choice of  $F_0$ it is clear that it  has no distinct eigenvalues which are congruent modulo ${2\pi \i}/{T}$. Consequently, from \cite{Yakubovich},  we obtain by using (iii)  that $F_\epsilon$ and $P_\epsilon$ are analytic functions of $\epsilon$ at $\epsilon =0$. Secondly, the rest of the results in the theorem can be easily proven by inserting the above equations for $P_n$ and $F_n$ into  \eqref{Floquet_decomposition_equation_general_order} and substituting with $\sum_{m \in \mathbb{Z}}\exp(\frac{2\pi \i}{T}mt)\Phi^m$. Furthermore, from the above formulas, it is clear that $F_n$ is constant and that $P_n(t)$ is $T$-periodic and satisfies $P_n(0) = 0$ for all $n \geq 1$. An explanation on why $F_0$ has been chosen in that particular way is presented in the next section.
    \end{proof}
    The main idea of the derivation of Theorem \ref{Inductive_Identity_for_Lyapunov-Floquet_decomposition} and formulas (\ref{Inductive_formula_F_n}) and (\ref{Inductive_formula_P_n}) has already been presented at the end of Section \ref{Diff_equation_for_Floquet_exp_mat_and_Lyapunov_trans}. Although Theorem \ref{Inductive_Identity_for_Lyapunov-Floquet_decomposition} is self-contained and provides a complete algorithm to compute the Lyapunov's transformation and the Floquet's exponent matrix, the next sections will draw closer attention to the formulas of the first orders of the Lyapunov's transformation and the Floquet's exponent matrix. They will elucidate further the dependence of $P_\epsilon$ and $F_\epsilon$ on $A_0$, $A_1$ and $A_2$. To further exemplify this connection, the following section will introduce the classical harmonic oscillator, which will allow for a simple but enriching application of Theorem \ref{Inductive_Identity_for_Lyapunov-Floquet_decomposition} and the subsequently derived equations for $F_0$, $F_1$ and some entries of $F_2$.
    
    Then, Section \ref{choice_of_leading_order_floquet_exponent} explains different possible choices of $F_0$ and their consequences for the Floquet-Lyapunov decomposition. It also gives reasons why the precise choice of $F_0$ in Theorem \ref{Inductive_Identity_for_Lyapunov-Floquet_decomposition} is made.

\subsection{Classical harmonic oscillator with modulated damping and restoring force}\label{Harmonic_oscillator}

The classical harmonic oscillator describes a point mass of mass $m$, attached to a weightless spring with suspension constant $k$, that is placed in a viscous liquid which leads to a damping factor $c$. Its dynamics is described by the differential equation
    $$m\frac{d^2 x}{dt^2} + c\frac{d x}{dt} + kx = 0,$$
where $x$ describes the displacement from the static state.
In the following, we will consider a harmonic oscillator where the damping constant $c$ is perturbed by a time dependent term $\epsilon\zeta(t)$ and where the suspension constant $k$ is also perturbed by a time dependent term $\epsilon\kappa(t)$. That is, the displacement $x$ is subject to the following equation:
    $$m\frac{d^2 x}{dt^2} + (c + \epsilon \zeta(t))\frac{d x}{dt} + (k + \epsilon\kappa(t))x = 0.$$
The equivalent first order ODE reads
    \begin{align*}
        \frac{d \Phi}{dt}= \left[\begin{pmatrix} \hspace{8pt}0 &\hspace{8pt}1\\
        -k &-c
        \end{pmatrix} + \epsilon\begin{pmatrix} 0 &0\\
        -\kappa(t) & -\zeta(t)
        \end{pmatrix}\right]\Phi,
    \end{align*}
where the mass $m$ has been set to 1.
Diagonalizing the leading order coefficient and writing $\alpha = \sqrt{c^2-4 k}$ then leads to the ODE
    \begin{align*}
        \frac{d \Phi}{dt} = \left( \begin{pmatrix}-\frac{c + \alpha}{2}\\
        &-\frac{c - \alpha}{2}\end{pmatrix} + \epsilon\left(\frac{\kappa(t)}{\alpha}\begin{pmatrix}1 &\frac{(c + \alpha)^2}{ 4  k}\\
        \frac{(c - \alpha)^2}{ 4  k} &-1\end{pmatrix} + \frac{\zeta(t)}{2}\begin{pmatrix}-1 - \frac{c}{\alpha} & -1 - \frac{c}{\alpha}\\
        -1 + \frac{c}{\alpha} & -1 + \frac{c}{\alpha} \end{pmatrix}\right)\right)\Phi.
    \end{align*}
In order to be able to  apply Theorem \ref{Inductive_Identity_for_Lyapunov-Floquet_decomposition} to the above equation, $\zeta$ and $\kappa$ need to be periodic functions with finite Fourier series. For the sake of illustration and simplicity, we will thus assume that
    \begin{align*}
        \zeta(t) &= \cos\left(\frac{2\pi}{T}at\right),\\
        \kappa(t) &= \cos\left(\frac{2\pi}{T}bt + \phi\right),
    \end{align*}
where $a$ and $b$ satisfy $\gcd(a,b) = 1$.
With the notation of Theorem \ref{Inductive_Identity_for_Lyapunov-Floquet_decomposition}, it holds that
    \begin{align*}
        A_0 &= \begin{pmatrix}-\frac{c+ \alpha}{2} \\
        &-\frac{ c - \alpha}{2}\end{pmatrix},\\
        A_1 &= \exp(\frac{2\pi}{T}(-a)t)\frac{1}{4}\begin{pmatrix}-1 - \frac{c}{\alpha} & -1 - \frac{c}{\alpha}\\
        -1 + \frac{c}{\alpha} & -1 + \frac{c}{\alpha} \end{pmatrix} +\exp(\frac{2\pi}{T}at)\frac{1}{4}\begin{pmatrix}-1 - \frac{c}{\alpha} & -1 - \frac{c}{\alpha}\\
        -1 + \frac{c}{\alpha} & -1 + \frac{c}{\alpha} \end{pmatrix}\\ &+\exp(\frac{2\pi \i}{T}(-b)t)\frac{e^{-\i\phi}}{2\alpha}\begin{pmatrix}1 &\frac{(c + \alpha)^2}{ 4  k}\\
        \frac{(c - \alpha)^2}{ 4  k} &-1\end{pmatrix} + \exp(\frac{2\pi \i}{T}bt)\frac{e^{\i\phi}}{2\alpha}\begin{pmatrix}1 &\frac{(c + \alpha)^2}{ 4  k}\\
        \frac{(c - \alpha)^2}{ 4  k} &-1\end{pmatrix},\\
        A_n &= 0 \text{ for } n \geq 2.
    \end{align*}

\subsection{Choice of the leading order Floquet exponent}\label{choice_of_leading_order_floquet_exponent}
    For the leading order Floquet exponent, that is, for $n=0$, the differential equation is given by
            $$\frac{d P_0}{dt}(t) = A_0P_0(t) - P_0(t)F_0,$$
            subject to the following conditions:
            \begin{equation*}
            \left\{
            \begin{aligned}
                &P_0(0) = \Id,\\
                &P_0: \mathbb{R} \rightarrow \mathbb{C}^{N\times N} \text{ is } T\text{-periodic},\\
                &F_0 \in \mathbb{C}^{N\times N}.
            \end{aligned}
            \right.
            \end{equation*}            
        Thus $P_0(t)$ is given by
            $P_0(t) = \exp((A_0-F_0)t)$
        and $F_0$ has to be chosen in such a way that $P_0(t)$ is $T$-periodic. In the case where $A_0$ is diagonal, this can be achieved by taking $F_0$ such that $A_0 -F_0$ is of the form
            $$A_0 - F_0 = \begin{pmatrix}
            \frac{2\pi \i}{T} \mathbb{Z}\\
            & \ddots\\
            &&\frac{2\pi \i}{T} \mathbb{Z}
            \end{pmatrix}.$$
        A natural choice would thus be to take $F_0$ diagonal in which case the real parts of its diagonal entries need to be given by the real parts of the diagonal entries of $A_0$. 
        For the imaginary parts there are more possibilities. A generally meaningful choice is however given by taking the imaginary parts inside the so-called \emph{first Brillouin zone} $\Im((F_0)_{ii}) \in [-\pi/T,\pi/T)$, that is, taking $\Im((F_0)_{ii})$ of minimal modulus for all $1 \leq i \leq N$. This makes sense when one interprets the Floquet decomposition as a decomposition of the solutions of the differential equation into a local, $T$-periodic part $P(t)$ and a global, non-periodic part $\exp(Ft)$. In order to differentiate between both, it is beneficial to choose $F$ in a way that minimizes the $T$-periodic oscillations that appear in $\exp(Ft)$. This is precisely achieved by taking the imaginary parts of $F_0$ to lie in the first Brillouin zone. Therefore, we set
            \begin{align*}
                (F_0)_{ii} = \Re((A_0)_{ii}) + \i\Im((F_0)_{ii}),
            \end{align*}
        where $\Im((F_0)_{ii})$ is chosen to be in $[-\frac{\pi }{T},\frac{\pi }{T})$ such that $\Im((A_0)_{ii}) - \Im((F_0)_{ii}) \in \frac{2 \pi }{T}\mathbb{Z}$. Another way to state this is that $\Im((F_0)_{ii})$ is the unique representative of $\Im((A_0)_{ii}) \in \mathbb{R}/\frac{2 \pi}{T}\mathbb{Z}$ in $[-\frac{\pi }{T},\frac{\pi }{T})$. Although this is the choice made for this paper, the same reasoning will also apply to a different choice of $F_0$, that is, a different choice of a set of representatives for elements in $\mathbb{R}/\frac{2 \pi}{T}\mathbb{Z}$ for $\Im((A_0)_{ii})$. The same derivations will then also apply. However, some degeneracies might need to be treated differently.
        
        For every choice of a representative of $\mathbb{R}/\frac{2 \pi}{T}\mathbb{Z}$, one can nevertheless write 
           \begin{align*}
               A_0 - F_0 = \begin{pmatrix}
            \frac{2\pi \i}{T}n_1\\
            &\frac{2\pi \i}{T}n_2\\
            && \ddots \\
            &&& \frac{2\pi \i}{T}n_{N}
            \end{pmatrix},
           \end{align*}
        with $n_1, \ldots, n_n \in \mathbb{Z}$ and the constant order Lyapunov's transformation $P_0(t)$ always taking the form
            \begin{align*}
                P_0(t) = \exp\left(\frac{2\pi \i}{T}n_1\right)E_{11} + \exp\left(\frac{2\pi \i}{T}n_2\right)E_{22} + \ldots + \exp\left(\frac{2\pi \i}{T}n_{N}\right)E_{NN},
            \end{align*}
        where $E_{ij}$ is the matrix who's entries are zero except at $(i,j)$ where its entry is  $1$.
        
        The following definitions will set the terminology.
        
        \begin{definition}[Constant order Floquet exponent, folding number]
            Let $A_\epsilon(t) = A_0 + \epsilon A_1(t) + \epsilon^2A_2(t) + \ldots \in \mathbb{C}^{N \times N}$ be an analytic square matrix-valued function and suppose all of the conditions of Theorem \ref{Inductive_Identity_for_Lyapunov-Floquet_decomposition} hold. Let $F_0 + \epsilon F_1 + \epsilon^2 F_2 + \ldots$ be as in Theorem \ref{Inductive_Identity_for_Lyapunov-Floquet_decomposition}. Then we call 
                $$(F_0)_{ii} \in \mathbb{R} \times \i[-\pi/T,\pi/T)$$
            the {constant order Floquet exponents} of the system 
            ${dx}/{dt} = A_\epsilon(t)x$
            and
                $$n_i := \frac{T((A_0)_{ii} - (F_0)_{ii}))}{2\pi \i} \in \mathbb{Z}$$
            their associated \emph{folding numbers}.
        \end{definition}
%
%
    
In fact, the \emph{constant order Floquet exponents} are equal to the actual Floquet exponents up to an error of order $\epsilon$. That is, the constant order Floquet exponents are actually the constant order components of the actual Floquet exponents.

    Folding is a more subtle phenomenon. Depending on the period $T$ of the modulation, $(F_0)_{ii}$ might be equal to or different from $(A_0)_{ii}$, depending on whether $(A_0)_{ii} \in \mathbb{R} \times \i[-\pi/T,\pi/T)$ or not.
    In the latter case, $(A_0)_{ii} \in \mathbb{R} \times \i[(n_i-1)\pi/T,(n_i+1)\pi/T)$, where $n_i$ is the folding number associated to $(F_0)_{ii}$ and is \emph{not} equal to zero. In that case, one says that $F_0$ is \emph{folded}. The terminology \emph{folding}  refers to the fact that when computing
        \begin{align}
             &\mathbb{C} &\longrightarrow\hspace{40pt} &\mathbb{R} \times \i[-\pi/T,\pi/T)\\
            &(A_0)_{jj} &\longmapsto\hspace{40pt} &(F_0)_{jj} = (A_0)_{jj} \mod \frac{2\pi \i}{T},
        \end{align}    
    the space $\mathbb{R} \times \i\mathbb{R} \cong \mathbb{C}$ is ``wrapped around'' the infinitely extended cylinder 
        $$ \mathbb{R} \times \i[-\pi/T,\pi/T) \xrightarrow[\sim]{\exp((\cdot)/T)} \mathbb{R} \times \mathbb{S}^1,$$
    and each strip $\mathbb{R} \times \i[(n-1)\pi/T,(n+1)\pi/T)$ with $n \in \mathbb{Z}$ will correspond to another 'layer' that is wrapped around the cylinder. The folding number $n_i$ then specifies the 'layer' in which $(A_0)_{ii}$ is in, that is, the strip $\mathbb{R} \times \i[(n_i-1)\pi/T,(n_i+1)\pi/T)$ it lies on. Both the constant order Floquet exponent $(F_0)_{ii}$ and the corresponding folding number $n_i$  allow to recover $(A_0)_{ii}$. Namely, they satisfy
        $$(A_0)_{ii} = (F_0)_{ii} + n_i \frac{2\pi \i}{T}.$$
        
    Of course, $(F_0)_{ii}$ does not lie on a cylinder, but on the strip $\mathbb{R} \times \i[-\pi/T,\pi/T)$. The boundaries $\mathbb{R} \times \{-\pi \i/T\}$ and $\mathbb{R} \times \{\pi \i/T\}$ of the strip $\mathbb{R} \times \i[-\pi/T,\pi/T)$ will later lead to some discontinuities, which would not be there when one thinks of the Floquet exponents  as lying on a cylinder that does not have boundaries. 
    
    
    Folding might lead to coinciding diagonal entries of $F_0$, even when $A_0$ has distinct eigenvalues. This precisely happens when two entries, say $(A_0)_{ii}$ and $(A_0)_{jj}$, satisfy 
        $$(A_0)_{ii}-(A_0)_{jj} = n \frac{2\pi \i}{T}$$
    for some $n \in \mathbb{Z}$. In that case, $n$ is given by $n = n_i - n_j$. From another perspective, to make the constant order Floquet exponents $(F_0)_{ii}$ and $(F_0)_{jj}$ coincide, one needs to choose the modulation frequency $T$ in such a way that 
        $$((A_0)_{ii}-(A_0)_{jj})\frac{T}{2\pi \i} \in \mathbb{Z}.$$
    This will be very useful when one wants to \emph{create exceptional points}, i.e., points where the eigenvectors and eigenvalues of $F_\epsilon$ coincide. In other words, exceptional points are points where $F_\epsilon$ is not diagonalizable, but has a Jordan block of dimension strictly greater than $1$. This topic will be continued in the asymptotic analysis of Floquet exponents in Section \ref{asymptotic_analysis_of_floquet_exponents}.
    
    The example of the classical harmonic oscillator, see Section \ref{Harmonic_oscillator}, illustrates the mechanism of folding and folding numbers very well. 
    Figure \ref{fig:Plot_of_A0} displays the real and imaginary parts of the diagonal entries of $A_0$ when one parametrizes the system of the harmonic oscillator 
        \begin{align}\label{eq:Basic_mod_sys_ClaHarm}
            \frac{d \Phi}{dt} = \begin{pmatrix} 0 &1\\ -k-\epsilon\kappa(t) &-c-\epsilon\zeta(t)\end{pmatrix}\Phi, 
        \end{align}
    by varying the damping factor $c$ and setting the spring constant $k=c$.
    \begin{figure}[h!]
        \centering
        \includegraphics[width=0.7\columnwidth]{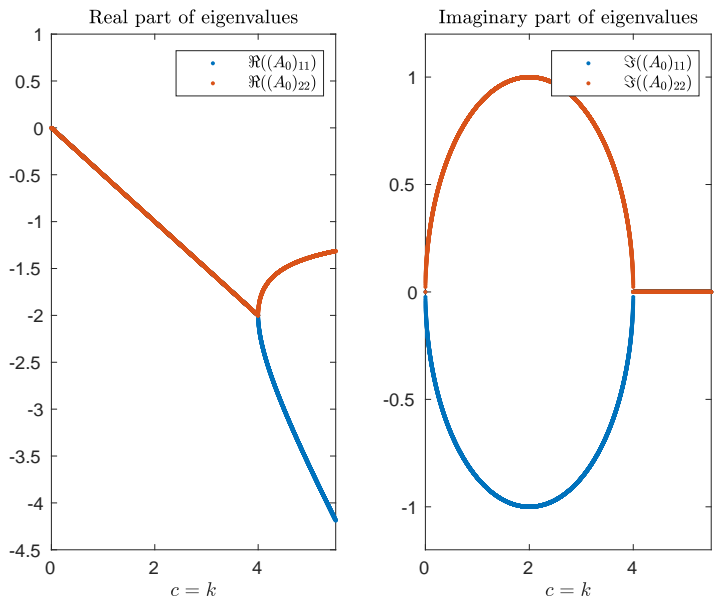}
        \caption{Constant order eigenvalues of the classical harmonic oscillator when one parametrizes the system \eqref{eq:Basic_mod_sys_ClaHarm} by varying the damping factor $c \in [0,5]$ and setting the spring constant $k=c$.}
        \label{fig:Plot_of_A0}
    \end{figure}
    In that setting, most $c \in [0,5]$ provide distinct diagonal entries, but for $c \in \{0,4\}$ the diagonal entries of $A_0$ are the same. Actually to be precise, at $c \in \{0,4\}$ the constant order system 
        \begin{align*}
            \frac{d \Phi}{dt} = \begin{pmatrix} 0 &-1\\ -c &-c \end{pmatrix}\Phi
        \end{align*}
    is not diagonalizable, that is, the system is posed at an \emph{exceptional point}. At those values for $c$, Theorem \ref{Inductive_Identity_for_Lyapunov-Floquet_decomposition} is not applicable. However, at all others $c \in [0,5]\setminus\{0,4\}$, it is. Furthermore, we will see later that exceptional points can be created through modulation by folding and at those exceptional points the theory is applicable.
    
    \begin{figure}
        \begin{center}
            \makebox[\textwidth]{\includegraphics[]{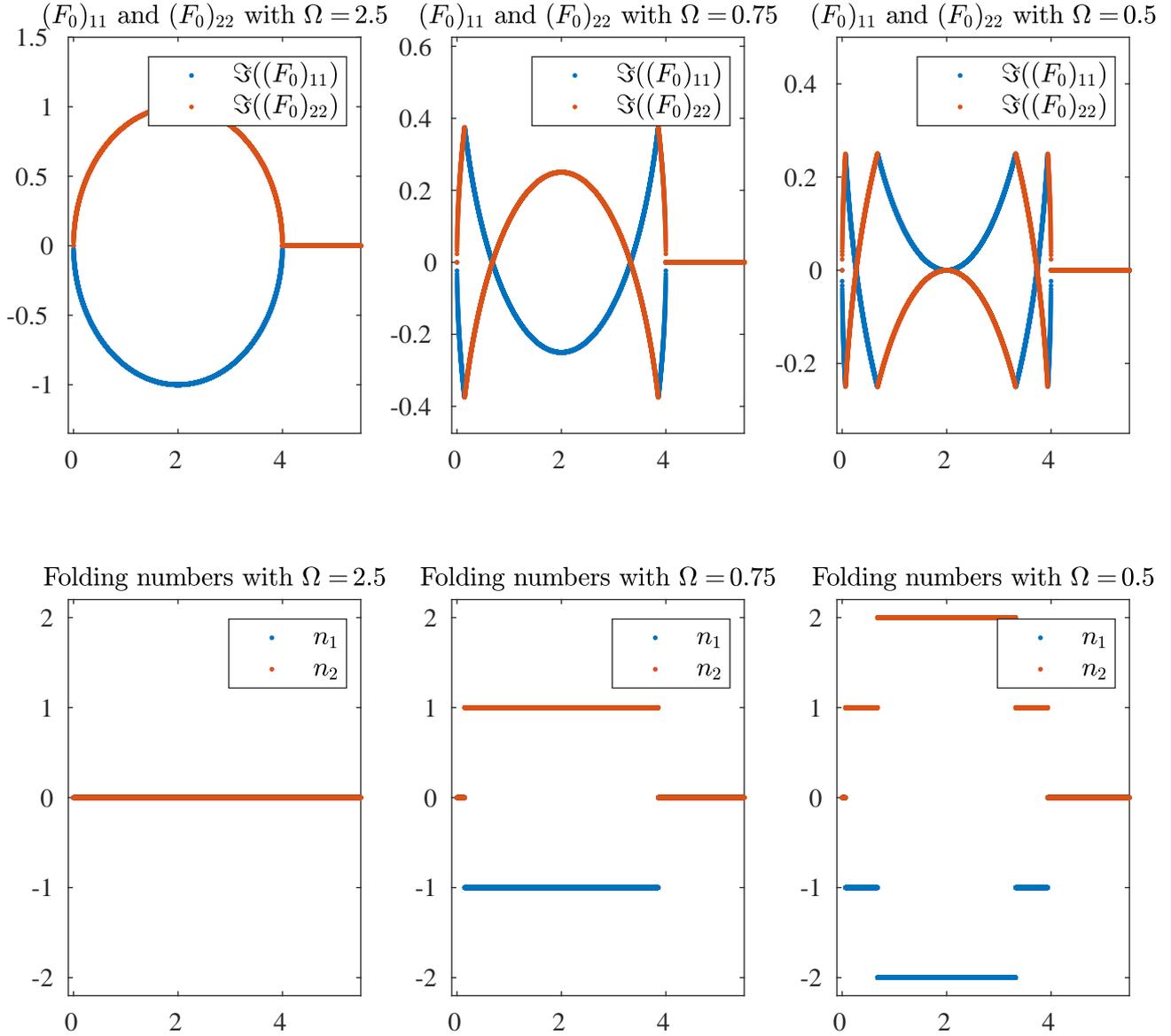}}
        \end{center}
        \centering
        \caption{Constant order Floquet exponents with different modulation frequencies $2\pi/T$ of the classical harmonic oscillator when one parametrizes the system \eqref{eq:Basic_mod_sys_ClaHarm} by varying the damping factor $c \in [0,5]$ and setting the spring constant $k=c$. The respective folding numbers are plotted in the second row.}
        \label{fig:Plot_of_F0_and_A0-F0}
    \end{figure}
    
    Depending on the period $T$ of the modulation $\kappa(t)$ and $\zeta(t)$, $F_0$ might differ from $A_0$, as can be seen in Figure \ref{fig:Plot_of_F0_and_A0-F0}. In that case, the folding numbers $n_1$ and $n_2$ differ from zero and $F_0$ might even have double eigenvalues at places where the eigenvalues of $A_0$ were distinct. 
    Concerning folding, one can very clearly see how the bands for the different eigenvalues leave the fundamental domain $\mathbb{R} \times [\frac{-\pi}{T},\frac{\pi}{T})$ and enter on the opposite side. At such a point $c_0$ where $(F_0(c_0^-))_{ii} \not= (F_0(c_0^+))_{ii}$, one should  think of the Floquet exponent as living on a cylinder, where the top part of the band $[\frac{-\pi}{T},\frac{\pi}{T})$ is glued to its bottom part in such a way one would have $F_0(c_0^-) = F_0(c_0^+)$. However, this thinking is not applicable here, since we are interested in an analytic expansion $F_\epsilon = F_0 + \epsilon F_1 + \ldots$, which is not possible on a cylinder.  In our setting, at such a point $c_0$, where $(F_0)_{ii}$ leaves the fundamental domain and enters on the other side, one can observe that its corresponding folding number $n_i$ changes by $1$. This precisely reflects the fact that at those ``exit''/``entry'' points, the corresponding eigenvalue of $A_0$ leaves the strip $\mathbb{R}\times \i[\frac{(n_i(c_0^-)-1)\pi}{T},\frac{(n_i(c_0^-)+1)\pi}{T})$ and enters the new strip $\mathbb{R}\times \i[\frac{(n_i(c_0^+)-1)\pi}{T},\frac{(n_i(c_0^+)+1)\pi}{T})$.

\subsection{Exact formulas for the first order Floquet's exponent matrix}\label{sec:exact_formula_F1}
    The previous section was dedicated to the study of the constant order Floquet exponent $F_0$ and its associated folding numbers $n_i$. However, without considering higher order analysis, the considerations made there are in some sense superficial. Strictly speaking, the constant order Floquet's exponent matrix is observed in nature\footnote{By this we mean an actual experiment where one provides some small modulations that one slowly turns down and estimates the associated Floquet's exponent matrix.} only by taking the limit of the actual Floquet's exponent matrix $F_\epsilon$ of the modulated system
        \begin{align*}
            \frac{d \Phi}{dt} = A_\epsilon(t)\Phi.
        \end{align*}
    That is, $F_0$ is obtained as
        \begin{align*}
            F_0 = \lim_{\epsilon \searrow 0} F_\epsilon,
        \end{align*}
    and is in most cases different from Floquet's exponent matrix in the case without modulation.  In fact, for the \emph{constant} system
        \begin{align*}
            \frac{d \Phi}{dt} =A_0\Phi,
        \end{align*}
    which is \emph{not} modulated in time, there is no folding and hence Floquet's exponent matrix is $A_0$. 
    This is why it is crucial to study the whole \emph{modulated} system and to investigate higher orders. In this section, closed formulas for the first order coefficients $F_1$ and $P_1(t)$ of the Floquet's exponent matrix $F_\epsilon$ and of the Lyapunov's transformation $P_\epsilon(t)$ are presented. Then, these formulas are illustrated by the example of a modulated classical harmonic oscillator from Section \ref{Harmonic_oscillator}.
    
    To determine $F_1$ and $P_1(t)$, one needs to either use Theorem \ref{Inductive_Identity_for_Lyapunov-Floquet_decomposition} or make the same derivation as explained at the end of Section \ref{Diff_equation_for_Floquet_exp_mat_and_Lyapunov_trans}. Either way one obtains the following theorem.

    \begin{theorem}[First order Floquet matrix and Lyapunov's transformation]\label{First_order_Floquet_matrix_and_Lyapunov_transformation}
    With the assumptions of Theorem \ref{Inductive_Identity_for_Lyapunov-Floquet_decomposition}, the choice of $F_0$ made there, and the notation $n_k = \frac{T}{2\pi \i}(A_0-F_0)_{kk}$, it holds that
    \begin{align*}
        (F_1)_{kl} = \begin{cases} \ds (A_1^{n_{k}-n_{l}})_{kl} &\text{ if } (F_0)_{kk} = (F_0)_{ll} ,\\
        \nm \ds
        ((F_0)_{ll}- (F_0)_{kk})\sum_{m \in \mathbb{Z}} \frac{(A_1^{m})_{kl}}{\frac{2\pi \i}{T}m +(A_0)_{ll}- (A_0)_{kk}} &\text{ if } (F_0)_{kk} \not= (F_0)_{ll},
        \end{cases}
    \end{align*}
    and that 
    \begin{align*}
        (P_1(t))_{kl} = \ds \sum_{m \in \mathbb{Z}} \exp(\frac{2\pi \i}{T}mt) \begin{cases} \frac{(A_1^{m-n_{l}})_{kl}}{\frac{2\pi \i}{T}m - (A_0)_{kk} +(F_0)_{ll}} &\text{ if } m \not= n_{k} , \\
        \nm \ds
        -\sum_{\Tilde{m} \not= n_{k}}\frac{(A_1^{\Tilde{m}-n_{l}})_{kl}}{\frac{2\pi \i}{T}\Tilde{m} - (A_0)_{kk} +(F_0)_{ll}} & \text{ if } m = n_{k}. \end{cases}
    \end{align*}
    \end{theorem}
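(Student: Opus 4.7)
The plan is to specialize Theorem \ref{Inductive_Identity_for_Lyapunov-Floquet_decomposition} to the case $n=1$ and then make a change of Fourier index so that the Fourier coefficients appearing are those of $A_1$ itself rather than those of $A_1(t)P_0(t)$.

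First I would record the shape of $P_0(t)$. Under the assumption and choice of Theorem \ref{Inductive_Identity_for_Lyapunov-Floquet_decomposition}, $A_0-F_0$ is diagonal with entries $\frac{2\pi\i}{T}n_k$, where $n_k=\frac{T}{2\pi\i}(A_0-F_0)_{kk}\in\mathbb{Z}$. Therefore
\begin{align*}
P_0(t)=\exp((A_0-F_0)t)=\diag\!\left(\exp\!\left(\tfrac{2\pi\i}{T}n_1 t\right),\ldots,\exp\!\left(\tfrac{2\pi\i}{T}n_N t\right)\right).
\end{align*}
Writing the finite Fourier series $A_1(t)=\sum_{m\in\Z}\exp(\tfrac{2\pi\i}{T}mt)\,A_1^m$, the $(k,l)$-entry of the product is
\begin{align*}
(A_1(t)P_0(t))_{kl}=\sum_{m\in\Z}\exp\!\left(\tfrac{2\pi\i}{T}(m+n_l)t\right)(A_1^m)_{kl}=\sum_{m'\in\Z}\exp\!\left(\tfrac{2\pi\i}{T}m't\right)(A_1^{m'-n_l})_{kl},
\end{align*}
after reindexing $m'=m+n_l$. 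Since for $n=1$ the sum $\sum_{i=1}^{n-1}$ in Theorem \ref{Inductive_Identity_for_Lyapunov-Floquet_decomposition} is empty, the matrices $\Phi^{m'}$ from that theorem are exactly the coefficients just computed, so $(\Phi^{m'})_{kl}=(A_1^{m'-n_l})_{kl}$.

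Next I would substitute this identification into the inductive formulas \eqref{Inductive_formula_F_n} and \eqref{Inductive_formula_P_n}. In the diagonal (resonant) case $(F_0)_{kk}=(F_0)_{ll}$, formula \eqref{Inductive_formula_F_n} evaluates $\Phi^{m}$ at $m=\frac{T}{2\pi\i}(A_0-F_0)_{kk}=n_k$, giving $(F_1)_{kl}=(\Phi^{n_k})_{kl}=(A_1^{n_k-n_l})_{kl}$, exactly the first case of the claim. In the off-resonant case $(F_0)_{kk}\neq (F_0)_{ll}$, I would reindex the sum with $m\mapsto m+n_l$ and then simplify the denominator using
\begin{align*}
\tfrac{2\pi\i}{T}(m+n_l)-(A_0)_{kk}+(F_0)_{ll}=\tfrac{2\pi\i}{T}m+\bigl((A_0)_{ll}-(F_0)_{ll}\bigr)-(A_0)_{kk}+(F_0)_{ll}=\tfrac{2\pi\i}{T}m+(A_0)_{ll}-(A_0)_{kk},
\end{align*}
which yields the second case of the stated formula for $F_1$. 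The same substitution and simplification, applied to \eqref{Inductive_formula_P_n}, produces the claimed expression for $P_1(t)$: the generic term becomes $(A_1^{m-n_l})_{kl}/(\tfrac{2\pi\i}{T}m-(A_0)_{kk}+(F_0)_{ll})$ and the correction term at $m=n_k$ is obtained by negating the sum of all generic terms with $m'\neq n_k$, as dictated by $P_1(0)=0$.

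There is no real obstacle beyond careful bookkeeping: the derivation is a routine substitution and a single Fourier index shift, plus the algebraic identity $\tfrac{2\pi\i}{T}n_l=(A_0-F_0)_{ll}$ used to rewrite the denominators. The only point that deserves attention is making sure the sum in \eqref{Inductive_formula_P_n} is restricted to $\tfrac{2\pi\i}{T}m'\neq (A_0-F_0)_{kk}$, i.e.\ $m'\neq n_k$, which matches the condition appearing in the statement after reindexing.
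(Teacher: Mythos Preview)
Your proof is correct and follows essentially the same approach as the paper: specialize Theorem \ref{Inductive_Identity_for_Lyapunov-Floquet_decomposition} to $n=1$, compute the Fourier coefficients of $A_1(t)P_0(t)$ using the explicit diagonal form of $P_0$, substitute into \eqref{Inductive_formula_F_n}--\eqref{Inductive_formula_P_n}, and reindex. Your write-up is in fact more explicit than the paper's, spelling out the index shift $m\mapsto m+n_l$ and the denominator identity $\tfrac{2\pi\i}{T}n_l=(A_0)_{ll}-(F_0)_{ll}$ that the paper subsumes under ``after re-indexing''.
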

    
    \begin{proof}
        Applying Theorem \ref{Inductive_Identity_for_Lyapunov-Floquet_decomposition}, one needs to compute the Fourier series coefficients of 
            \begin{align*}
                A_1(t)P_0(t) = \sum_{m \in \mathbb{Z}}\Phi^m.
            \end{align*}
        Denoting the respective Fourier series of $A_1$ and $P_0$ by 
            \begin{align*}
                A_1(t) = \sum_{m \in \mathbb{Z}}\exp(\frac{2\pi \i}{T}mt)A_1^m \quad \text{and} \quad
                P_0(t) = \sum_{m \in \mathbb{Z}}\exp(\frac{2\pi \i}{T}mt)P_0^m,
            \end{align*}
        respectively, it holds
            \begin{align*}
                \Phi^m = \sum_{n \in \mathbb{Z}}A_1^{m-n}P_0^n.
            \end{align*}
        Using the fact that $P_0(t) = \exp((A_0 -F_0)t)$ yields
            \begin{align*}
                (\Phi^m)_{kl} &= \sum_{j = 1}^N\sum_{n \in \mathbb{Z}}(A_1^{m-n})_{kj}(P^n_0)_{jl}\\
                            &= (A_1^{m - \frac{T}{2\pi \i}(A_0 - F_0)_{ll}})_{kl}.
            \end{align*}
        Inserting this into  equations \eqref{Inductive_formula_F_n}--\eqref{Inductive_formula_P_n} of Theorem \ref{Inductive_Identity_for_Lyapunov-Floquet_decomposition} and after re-indexing, the desired result follows. 
    \end{proof}
    
    Theorem \ref{First_order_Floquet_matrix_and_Lyapunov_transformation} shows that the entries of $F_1$ depend, firstly, on the first order modulation $A_1$, its frequencies $m$ and the frequency components $A_1^m$. Secondly, the entries of $F_1$ depend on the constant order Floquet exponent matrix $F_0$. Importantly,  the entries of $F_0$ determine which formula to use for the different entries of $F_1$. In particular, we have that the diagonal entries of $F_1$ are always given by the constant part of the first order modulation, namely
        \begin{align*}
            (F_1)_{kk} = (A_1^0)_{kk} \text{ for all } k \in \{1,\ldots,N\}
        \end{align*}
    and one obtains the following lemma.
    \begin{lemma}[Diagonal entries of $F_1$]\label{lemma:diagonal_entries_of_F1}
        Assuming the setting of Theorem \ref{Inductive_Identity_for_Lyapunov-Floquet_decomposition} and let $k \in \{1, \ldots, N\}$. Then,
            \begin{align*}
                (F_1)_{kk} = \big(\frac{T}{2\pi}\int_{0}^{\frac{2\pi}{T}} A_1(t)\,dt\big)_{kk}.
            \end{align*}
        In particular, if the the modulation does not have a constant Fourier part, then
            \begin{align*}
                (F_1)_{kk} = 0,
            \end{align*}
            and $F_1$ is off-diagonal.
    \end{lemma}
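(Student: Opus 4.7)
The plan is to reduce the lemma directly to the case $k = l$ of the formula for $F_1$ established in Theorem \ref{First_order_Floquet_matrix_and_Lyapunov_transformation}. Since we are looking at diagonal entries, we have trivially $(F_0)_{kk} = (F_0)_{ll}$, which selects the first case of the piecewise formula. This gives
\[
(F_1)_{kk} = (A_1^{n_k - n_k})_{kk} = (A_1^{0})_{kk},
\]
so everything reduces to identifying $(A_1^0)_{kk}$ with the claimed time integral.

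Next I would unfold the Fourier-series notation used in the proof of Theorem \ref{First_order_Floquet_matrix_and_Lyapunov_transformation}, where
\[
A_1(t) = \sum_{m \in \mathbb{Z}} \exp\!\left(\tfrac{2\pi \i}{T} m t\right) A_1^m.
\]
The zeroth Fourier coefficient of a $T$-periodic continuous matrix-valued function is, by standard orthogonality of the exponentials $\exp(\tfrac{2\pi \i}{T} m t)$ on $[0,T]$, given by the average
\[
A_1^0 = \frac{1}{T}\int_0^T A_1(t)\, dt,
\]
and taking the $(k,k)$ entry produces the right-hand side of the lemma (up to the cosmetic rescaling between $\tfrac{1}{T}\int_0^T$ and $\tfrac{T}{2\pi}\int_0^{2\pi/T}$ written in the statement). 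The second assertion then follows immediately: absence of a constant Fourier component means precisely $A_1^0 = 0$, so all diagonal entries of $F_1$ vanish, and $F_1$ is off-diagonal.

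There is essentially no obstacle here; the work is entirely done by Theorem \ref{First_order_Floquet_matrix_and_Lyapunov_transformation}. The only thing to double-check is that the ``if $(F_0)_{kk} = (F_0)_{ll}$'' branch is indeed applicable at $k = l$ without any caveat coming from the analyticity hypotheses of Theorem \ref{Inductive_Identity_for_Lyapunov-Floquet_decomposition}, which it is, since conditions (i)--(iii) are assumed throughout. The proof can therefore be written as a one-line invocation of Theorem \ref{First_order_Floquet_matrix_and_Lyapunov_transformation} plus a sentence identifying $(A_1^0)_{kk}$ as the time average.
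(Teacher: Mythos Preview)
Your proposal is correct and matches the paper's approach exactly: the paper derives the lemma as an immediate consequence of the $k=l$ case of Theorem~\ref{First_order_Floquet_matrix_and_Lyapunov_transformation}, observing just before the lemma that $(F_1)_{kk} = (A_1^0)_{kk}$ and then identifying $A_1^0$ as the time average of $A_1$. Your remark about the cosmetic rescaling in the integral bounds is also apt.
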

	
    The latter is a natural assumption, and is true for the classical harmonic oscillator whenever both modulation frequencies $a$ and $b$ are non-zero. For all other entries of $F_1$ nothing can be said in general, since its off-diagonal entries $(F_1)_{kl}$ depend on whether $(F_0)_{kk} = (F_0)_{ll}$ or not. To the end of clarifying further the off-diagonal entries of $F_1$, both cases are treated separately in the following two sections and are illustrated by the classical harmonic oscillator example.

    \subsubsection{Example when $F_0$ has distinct diagonal entries}\label{sec:example_when_F0_distinct_diag_entries}
        The case where $F_0$ has distinct diagonal entries corresponds, to some extent, to the \emph{generic case} and the case where $F_0$ has non-distinct eigenvalues corresponds to the rather \emph{exotic case}. By this we mean that choosing the diagonal entries of a matrix at random, with respect to a Lebesgue-type measure, almost surely provides a diagonal matrix with distinct eigenvalues. Looking at the example of the classical harmonic oscillator (see Figure \ref{fig:Plot_of_F0_and_A0-F0}) also reflects this kind of thinking, since the points $c\in [0,5]$ where $F_0$ has non-distinct eigenvalues are only finitely many points. 
        Thus, generically it holds
            $$(F_1)_{kl} = ((F_0)_{ll}- (F_0)_{kk})\sum_{m \in \mathbb{Z}} \frac{(A_1^{m})_{kl}}{\frac{2\pi \i}{T}m +(A_0)_{ll}- (A_0)_{kk}}$$
        for $k \not= l$.
        \begin{figure}
            \begin{center}
                \makebox[\textwidth]{\includegraphics[]{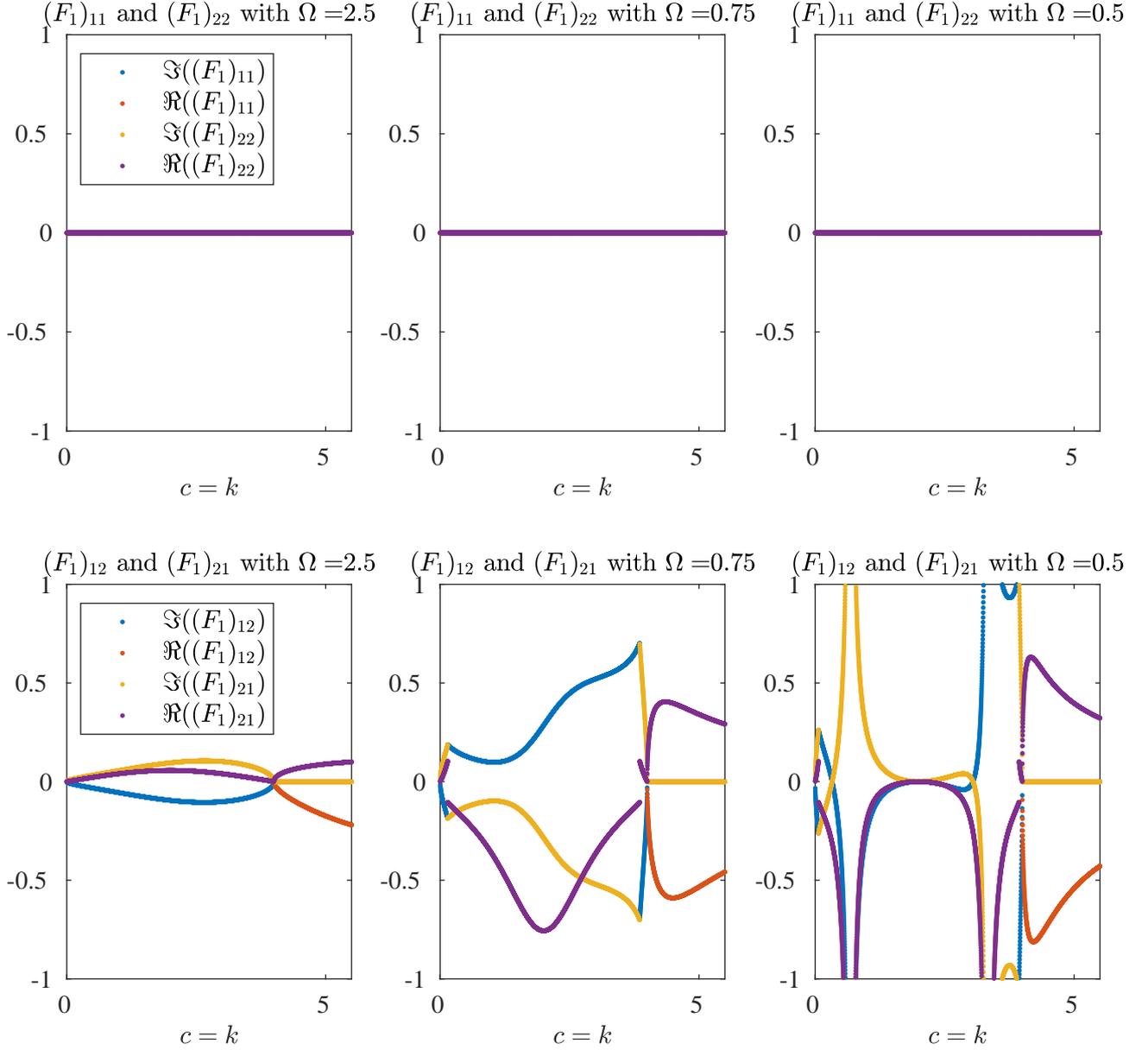}}
            \end{center}
            \centering
            \caption{Diagonal and off-diagonal entries of $F_1$ with $a=2$ and $b=3$ when one parametrizes the system \eqref{eq:Basic_mod_sys_ClaHarm} by varying the damping factor $c \in [0,5]$ and setting the spring constant $k=c$. Remarkable behavior appears around points $c$ where bands intersect (c.f. \Cref{fig:Plot_of_F0_and_A0-F0}). The off-diagonal entries of $F_1$ might be discontinuous, and if the modulation frequency coincides with the resonant frequency (c.f. \Cref{def_resonant_freq}), the entries might have poles. In either case, the diagonal entries vanish.}
            \label{fig:my_label2}
        \end{figure}
        \begin{figure}
            \begin{center}
                \makebox[\textwidth]{\includegraphics[]{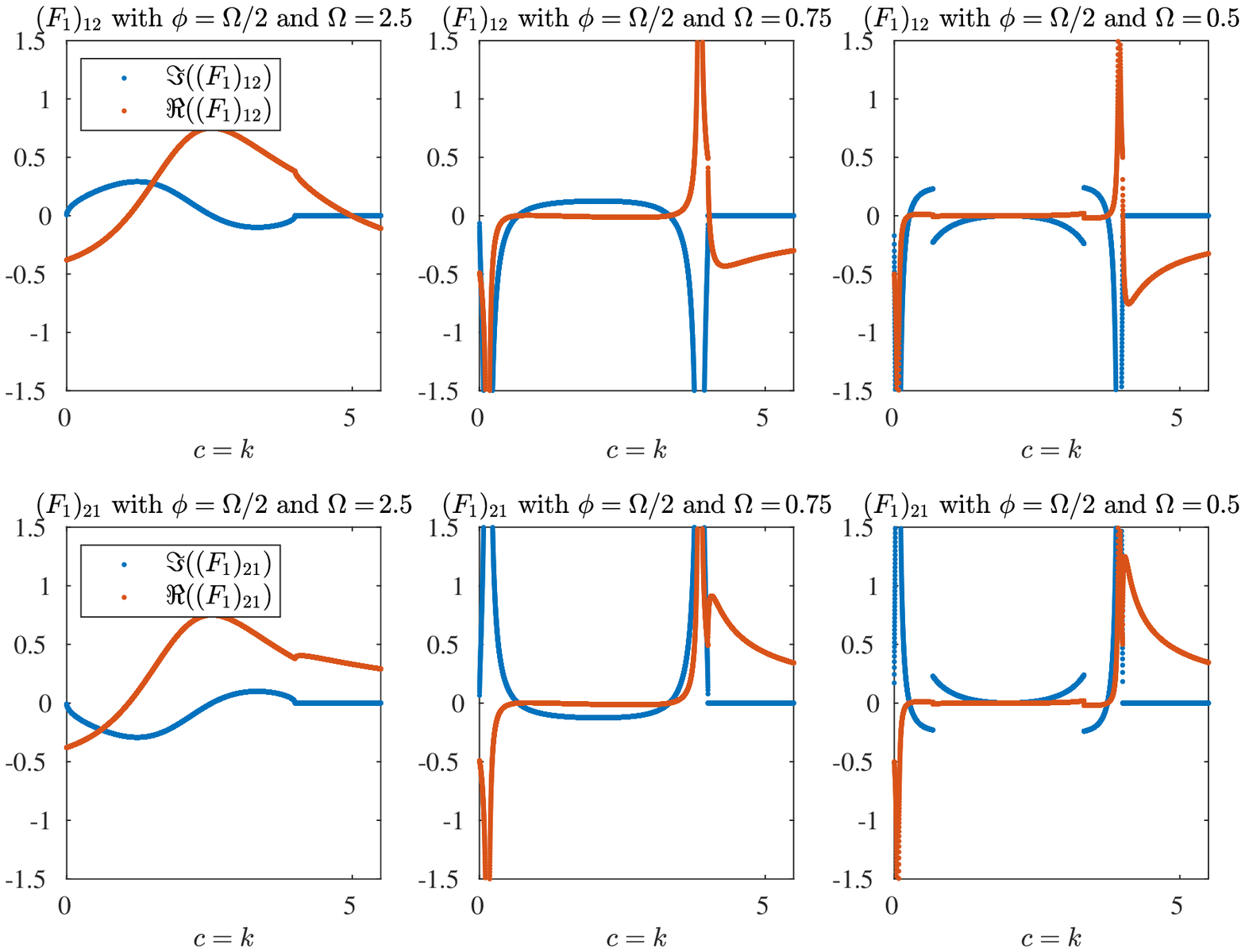}}
              \end{center}
            \caption{Off-diagonal entries of $F_1$ with $a=b=1$ when one parametrizes the system \eqref{eq:Basic_mod_sys_ClaHarm} by varying the damping factor $c \in [0,5]$ and setting the spring constant $k=c$. A comparison with \Cref{fig:my_label2} shows that different poles are excited in the current case, which are the ones corresponding to the resonant frequencies $n_i-n_j = 1$.}
            \label{fig:my_label1}
        \end{figure}

        Looking at a parametrized system $A_\epsilon(c,t)$ as the classical harmonic oscillator example from Section \ref{Harmonic_oscillator}, one can see that the matrix $F_1(c)$ depends continuously on $c\in D$, with $D$ being the domain where $F_0(c)$ has distinct eigenvalues. Outside of such a domain, the off-diagonal entries of $F_1(c)$ may have discontinuities. An example of those points are locations $c$ where $A_0(c)$ traverses the boundary of a fundamental domain $\i[\frac{(n-1)\pi}{T},\frac{(n+1)\pi}{T})\times \mathbb{R}$, with $n \in \mathbb{Z}$, since at those locations $F_0$ is discontinuous. Another location can be at places $c_0$ where $F_0(c_0)$ has non-distinct eigenvalues. In such cases, $F_1$ might even have divergent off-diagonal entries and its off-diagonal entries at $c_0$ might be given by another formula. This case is discussed in the next subsection.

    \subsubsection{Example when $F_0$ has non-distinct diagonal entries}\label{F1_when_F0_has_double_eigenvalue}
    As was pointed out in the previous section, the entries of $F_1$ might be unbounded for $c$ in a neighborhood of some critical value $c_0$ such that $F_0(c_0)$ has non-distinct diagonal entries. This phenomenon will actually persist until the analysis of the Floquet exponents. In some sense, these singularities correspond to the fact that the eigenvalues of a \emph{generic} $F_\epsilon$ have no linear component in  $\epsilon$. To be more precise, when one considers the Taylor expansion in $\epsilon$ of an eigenvalue $\lambda_\epsilon$ of $F_\epsilon$, then generically it will have the form
        \begin{equation}\label{eq:exp}
        	\lambda_\epsilon = \lambda_0 + \epsilon^2\lambda_2 + O(\epsilon^3),
        	\end{equation}
    where its linear term $\lambda_1$ is zero. However, when one considers a parametrized system $F_\epsilon(c)$, there can be choices of $c_0$ where the eigenvalues of $F_\epsilon(c_0)$ do have a first order dependence on $\epsilon$.
That is, for all sufficiently small $\delta \neq 0$, the eigenvalue of 
$F_\epsilon (c_0 +\delta)$ will have no linear term in its Taylor expansion.
     Assuming that the eigenvalues of $F_\epsilon(c)$ depend \emph{continuously} on $\epsilon$ \emph{and} on $c$, the only possibility of transition between $F_\epsilon(c_0)$ that has a linear component and $F_\epsilon(c_0 + \delta)$, that has no linear component, is to have big second order Taylor coefficients, so that in the limit $\delta \rightarrow 0$, the term $\epsilon^2\lambda_2(c_0 + \delta)$ ``seems'' to be linear for small $\epsilon$. This is obviously due to the fact that the eigenvalues of $F_\epsilon(c)$ do not depend analytically on $c$, but have a root-like behavior close to a double eigenvalue of $F_\epsilon(c_0)$. Consequently, \eqref{eq:exp} is valid pointwise in $c$ and $\lambda_2(c)$ does not depend continuously on $c$.
     
    In a general setting of a parametrized equation $dx(t)/dt = A_\epsilon(c,t)x(t)$, the discussion above shows that we cannot obtain asymptotic expansions of the form \eqref{eq:exp} which are valid uniformly for $c$ in a neighborhood of such critical value $c_0$. Nevertheless, in the setting of \Cref{Inductive_Identity_for_Lyapunov-Floquet_decomposition} (or for the classical harmonic oscillator with fixed $c$) we can obtain explicit asymptotic expansions. Considering  a double eigenvalue of $F_0$, we have the following lemma.
    \begin{lemma}\label{FloquetExponentMatrixForDoubleFloquetExponent}
        Assuming the setting of Theorem \ref{Inductive_Identity_for_Lyapunov-Floquet_decomposition} and supposing $F_0$ has an eigenvalue of multiplicity 2,
        \begin{align*}
            (F_0)_{kk} = (F_0)_{ll} \text{ for some fixed } k \not= l,
        \end{align*}
        then the corresponding entries of $F_1$ are given by
        \begin{align*}
            \begin{pmatrix}
        (F_1)_{kk} & (F_1)_{kl}\\
        (F_1)_{lk} & (F_1)_{ll}
        \end{pmatrix} = \begin{pmatrix}
        (A_1^0)_{kk} & (A_1^{n_{k}-n_{l}})_{kl}\\
        (A_1^{n_{l}-n_{k}})_{lk} & (A_1^0)_{ll}
        \end{pmatrix}.
        \end{align*}
    \end{lemma}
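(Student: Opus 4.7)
The plan is to observe that this lemma is a direct specialization of Theorem \ref{First_order_Floquet_matrix_and_Lyapunov_transformation} (together with Lemma \ref{lemma:diagonal_entries_of_F1}) to the four entries indexed by $\{k,l\}\times\{k,l\}$, so no new computation is really required; we just need to identify which branch of the piecewise formula applies for each entry and substitute.

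For the diagonal entries, I would invoke Lemma \ref{lemma:diagonal_entries_of_F1} directly, which tells us that $(F_1)_{kk}=(A_1^0)_{kk}$ and $(F_1)_{ll}=(A_1^0)_{ll}$ without any assumption on the multiplicity of $F_0$. Equivalently, one can read this off from the first case of the formula in Theorem \ref{First_order_Floquet_matrix_and_Lyapunov_transformation}, taking $(k,l)=(k,k)$ (respectively $(l,l)$), since $(F_0)_{kk}=(F_0)_{kk}$ trivially and $n_k-n_k=0$, so the only surviving Fourier coefficient is $A_1^{0}$.

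For the off-diagonal entries, the hypothesis $(F_0)_{kk}=(F_0)_{ll}$ places the pair $(k,l)$ into the \emph{first} case of the piecewise formula of Theorem \ref{First_order_Floquet_matrix_and_Lyapunov_transformation}, giving $(F_1)_{kl}=(A_1^{n_k-n_l})_{kl}$; symmetrically $(F_1)_{lk}=(A_1^{n_l-n_k})_{lk}$. Assembling these four statements into the $2\times 2$ block yields exactly the matrix displayed in the lemma.

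There is no genuine obstacle here. The only subtlety worth flagging is that, even though $(F_0)_{kk}=(F_0)_{ll}$, the indices $n_k=\tfrac{T}{2\pi\i}(A_0-F_0)_{kk}$ and $n_l=\tfrac{T}{2\pi\i}(A_0-F_0)_{ll}$ generally differ (this is precisely the folding phenomenon emphasized in Section \ref{choice_of_leading_order_floquet_exponent}), so the relevant Fourier modes of $A_1$ selected for the off-diagonal entries are $A_1^{n_k-n_l}$ and $A_1^{n_l-n_k}$ rather than $A_1^{0}$. This asymmetry between the diagonal and off-diagonal entries is what makes the lemma worth stating separately, and it is exactly what one obtains by tracking the shift $m-n_l$ appearing inside the Fourier index of $A_1$ in Theorem \ref{First_order_Floquet_matrix_and_Lyapunov_transformation}.
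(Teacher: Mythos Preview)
Your proposal is correct and takes essentially the same approach as the paper: the paper's own proof is the single line ``This directly follows from Theorem \ref{Inductive_Identity_for_Lyapunov-Floquet_decomposition},'' and you are citing the immediate corollary Theorem \ref{First_order_Floquet_matrix_and_Lyapunov_transformation} of that theorem, which already packages the piecewise formula for $(F_1)_{kl}$ you then specialize. Your observation that the hypothesis $(F_0)_{kk}=(F_0)_{ll}$ selects the first branch for all four entries, with the folding numbers $n_k,n_l$ accounting for the off-diagonal Fourier shifts, is exactly the content of the paper's one-line proof made explicit.
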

    \begin{proof}
        This directly follows from Theorem \ref{Inductive_Identity_for_Lyapunov-Floquet_decomposition}.
    \end{proof}
    This already shows that the modulation frequencies highly influence whether this double eigenvalue is perturbed linearly or quadratically. Namely, to first order, the perturbed eigenvalues are given by the eigenvalues of the matrix
    \begin{align*}
        \begin{pmatrix}
        (F_0)_{kk} \\
        &(F_0)_{ll}
        \end{pmatrix} + \epsilon
        \begin{pmatrix}
        (F_1)_{kk} & (F_1)_{kl}\\
        (F_1)_{lk} & (F_1)_{ll}
        \end{pmatrix}.
    \end{align*}
    Supposing that there is no constant first order perturbation of the system, i.e. $A_0 = 0$, the eigenvalues of $F_\epsilon$ that correspond to $(F_0)_{kk}$ and $(F_0)_{ll}$ are then given by the roots of the polynomial
    \begin{align*}
        (\lambda -(F_0)_{kk})(\lambda -(F_0)_{ll}) - \epsilon^2(A_1^{n_{k}-n_{l}})_{kl}(A_1^{n_{l}-n_{k}})_{lk}.
    \end{align*}
    Since $(F_0)_{kk} = (F_0)_{ll}$, they are thus given by
    \begin{align*}
        (F_0)_{kk} \pm \epsilon\sqrt{(A_1^{n_{k}-n_{l}})_{kl}(A_1^{n_{l}-n_{k}})_{lk}} + O(\epsilon^2).
    \end{align*}
	Hence, the perturbation of the constant system Floquet exponent $(F_0)_{kk}$ depends heavily on the frequency components of $A_1$. If $(F_0)_{kk}$ is a double eigenvalue and the modulation $A_1$ has non-zero frequency components of order $n_k-n_l$ and $n_l-n_k$, it follows that the double eigenvalue will be perturbed linearly. This leads to the following definition.
    
    \begin{definition}[Resonant frequencies of multiple Floquet exponent]\label{def_resonant_freq}
        Supposing the setting of Theorem \ref{Inductive_Identity_for_Lyapunov-Floquet_decomposition} and assuming that $(F_0)_{kk}$ is a multiple constant order Floquet exponent and let $l_1, \ldots, l_m \subset \{1,\ldots ,N\}$ be a complete non-redundant list of indices such that
            $$ (F_0)_{kk} = (F_0)_{l_jl_j}.$$
        Then the \emph{resonant frequencies of the multiple Floquet exponent $(F_0)_{kk}$} are given by
            $$\{n_{l_j} - n_{l_i} : i,j \in \{1, \ldots, m\} \text{ and } l_i \not= l_j \},$$
       where $n_{l_i}$ and $n_{l_j}$ denote the folding numbers of $(F_0)_{l_il_i}$ and $(F_0)_{l_jl_j}$, respectively.\footnote{The resonant frequencies, as defined here, are integer-valued and correspond to the physical frequencies $\frac{2\pi}{T}(n_{l_j} - n_{l_i})$. }
    \end{definition}
    Although the folding numbers of Floquet exponents are dependent on the choice of representative, the resonant frequencies of a multiple Floquet exponent are not. The next lemma will clarify why the terminology of \emph{resonant} frequencies is chosen.
    
    \begin{lemma}
        Assuming the setting of Theorem \ref{Inductive_Identity_for_Lyapunov-Floquet_decomposition}, let $(F_0)_{kk}$ be a Floquet exponent of the constant system
            $$\frac{dx}{dt} = A_0x$$
        and assume that $A_1$ has no constant component, then the following implication holds.
        If $(F_0)_{kk}$ is perturbed linearly in $\epsilon$, then \begin{enumerate}[(i)]
            \item The Floquet exponent $(F_0)_{kk}$ is a multiple Floquet exponent. In other words, there exists a complete list $l_1 \ldots l_m \subset \{1,\ldots ,N\}$ with $l_1 < \ldots < l_m$ and $m \geq 2$, such that $(F_0)_{kk} = (F_0)_{l_jl_j}$ for all $j \in \{1,\ldots, m\}$;
            \item The first order modulation has at least \emph{two} non-zero frequency components corresponding to the resonant frequencies 
            $$\{n_{l_j} - n_{l_i} : i,j \in \{1, \ldots, m\} \text{ and } l_i \not= l_j \}$$
            of the multiple Floquet exponent $(F_0)_{kk}$.
        \end{enumerate}
    \end{lemma}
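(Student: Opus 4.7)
The plan is to prove the contrapositive in two stages: assuming that $(F_0)_{kk}$ is perturbed linearly in $\epsilon$, I will establish (i) and (ii) by negating each conclusion in turn.

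To show (i), assume for contradiction that $(F_0)_{kk}$ is a \emph{simple} eigenvalue of $F_0$. Since $F_0$ is diagonal and $(F_0)_{kk}$ is simple, standard analytic perturbation theory for simple eigenvalues yields an eigenvalue of $F_\epsilon$ near $(F_0)_{kk}$ that is analytic in $\epsilon$ with first-order coefficient equal to $(F_1)_{kk}$. By \Cref{lemma:diagonal_entries_of_F1}, this coefficient equals $(A_1^0)_{kk}$, which vanishes under the hypothesis that $A_1$ has no constant Fourier part. Hence the perturbation has no linear term, contradicting the assumption and proving (i).

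For (ii), we may now assume $(F_0)_{kk}$ has multiplicity $m \geq 2$, realized by the indices $l_1, \ldots, l_m$. Suppose, for contradiction, that at most one Fourier coefficient $A_1^r$ with $r$ in the resonant set $\{n_{l_j} - n_{l_i} : i \ne j\}$ is nonzero. The first-order splitting of the eigenvalues of $F_\epsilon$ clustering at $(F_0)_{kk}$ is governed by the $m \times m$ block $M$ defined by $M_{pq} := (F_1)_{l_p l_q}$. By \Cref{First_order_Floquet_matrix_and_Lyapunov_transformation}, applied in the case $(F_0)_{l_p l_p} = (F_0)_{l_q l_q}$, we have $M_{pq} = (A_1^{n_{l_p} - n_{l_q}})_{l_p l_q}$, and the diagonal entries vanish (again using $A_1^0 = 0$). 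If the single nonzero resonant component is $A_1^{r_0}$, then necessarily $r_0 \neq 0$, and $M_{pq} \neq 0$ forces $n_{l_p} - n_{l_q} = r_0$. Ordering the indices $l_1, \ldots, l_m$ by their folding number makes $M$ strictly triangular, hence nilpotent; by the standard perturbation result for degenerate eigenvalues, this forces all linear $\epsilon$ coefficients of the cluster to vanish, contradicting the hypothesis.

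The principal obstacle is the nilpotency argument in the second step. The crucial observation is that any nonzero entry $M_{pq}$ forces $n_{l_q} = n_{l_p} - r_0$, so any directed walk in the support graph of $M$ traces out an arithmetic progression of folding numbers with common difference $-r_0 \ne 0$; since only $m$ folding numbers are available among $\{n_{l_1}, \ldots, n_{l_m}\}$, no such walk can exceed length $m-1$, giving $M^m = 0$. Coincidences among the folding numbers cause no trouble: any two indices sharing the same folding number would contribute to $M$ through $A_1^0 = 0$, so those entries vanish automatically and the triangularization persists.
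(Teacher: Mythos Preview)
Your argument is correct. The paper's own proof is a one-line reference to the discussion preceding \Cref{def_resonant_freq}, which in effect only treats the double case $m=2$: there the block $M$ is the $2\times 2$ matrix $\begin{pmatrix}0 & (A_1^{n_k-n_l})_{kl}\\ (A_1^{n_l-n_k})_{lk} & 0\end{pmatrix}$, and the linear splitting $\pm\sqrt{(A_1^{n_k-n_l})_{kl}(A_1^{n_l-n_k})_{lk}}$ is nonzero only if both resonant Fourier components are nonzero. Your proof follows the same reduction to the block $M=(F_1)_{l_pl_q}$ but then supplies a genuine general-$m$ argument: if only a single resonant frequency $r_0\neq 0$ contributes, the support of $M$ forces folding numbers along any nonzero product to form an arithmetic progression with step $-r_0$, so $M$ is nilpotent and all first-order eigenvalue corrections vanish. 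This is a strictly more detailed justification than the paper gives; the paper's brevity buys readability at the cost of leaving the $m\geq 3$ case implicit, while your nilpotency argument closes that gap cleanly. Your handling of the edge case of coinciding folding numbers (where the relevant entry comes from $A_1^0=0$) is also correct and worth keeping.
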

    
    \begin{proof}
        The lemma is a consequence of the explanation before Definition \ref{def_resonant_freq}.
    \end{proof}
        
    For a better understanding of resonant frequencies and their influence on linear perturbation of multiple Floquet exponents, the example of the classical harmonic oscillator will be considered. The following will make use of the terminology and notations introduced in Section \ref{Harmonic_oscillator}.
    
    When fixing the modulation frequency $\Omega:= 2\pi/T$, but modifying the modulation frequencies $a$ and $b$, one can observe different degeneracies of the second order Floquet exponent perturbation; see for example Figure \ref{fig:Floquet_exponent_pertubation_ClaHarmOs_Different_modulation_frequencies}.
    At those degeneracies, as already discussed at the beginning of this section, the perturbation of the corresponding Floquet exponent has a non-zero linear component and is thus of \emph{first} and not of second order. To \emph{detect} those points, one needs to perform the following:
    \begin{enumerate}[(i)]
        \item Compute where band crossings occur in the folded system at $\epsilon=0$, that is, detect the presence of multiple constant order Floquet exponents;
        \item Compute the resonant frequencies of the multiple Floquet exponent and verify whether the modulation frequencies coincide with the resonant frequencies.
    \end{enumerate}
    If the last condition is met, then it is very likely that the corresponding Floquet exponent has a linear perturbation component. However, considering Lemma \ref{FloquetExponentMatrixForDoubleFloquetExponent}, one sees that the entries corresponding to the multiple eigenvalue of the perturbation matrix $A_1$ have to be non-zero and only then the perturbation of the multiple Floquet exponent is of first order.
    
    To \emph{generate} a first order perturbation of some Floquet exponents of a system, one thus needs to perform the following. Say, one would like to perturb the Floquet exponent corresponding to $(A_0)_{kk}$ linearly. Then
    \begin{enumerate}[(i)]
        \item One needs to fold the system in a way that the corresponding Floquet exponent $(F_0)_{kk}$ is a multiple Floquet exponent. That is, one has to choose some $l \in \{1, \ldots, N\}-\{k\}$ such that $\Re((A_0)_{kk}) = \Re((A_0)_{ll})$ and then one possible modulation period $\Omega$ would be given as
            $$\Omega = \Im((A_0)_{kk}) - \Im((A_0)_{ll});$$
        \item In a second step one needs to determine the resonant frequencies of the multiple Floquet exponent $(F_0)_{kk}$ and modulate the system with those frequencies in the first order.
    \end{enumerate}
    This procedure will generate a linear perturbation of the Floquet exponent $(F_0)_{kk}$ as long as the corresponding entries of $A_1$ are \emph{non-zero}.
    
    \begin{figure}
        \centering
            \includegraphics[width=\textwidth]{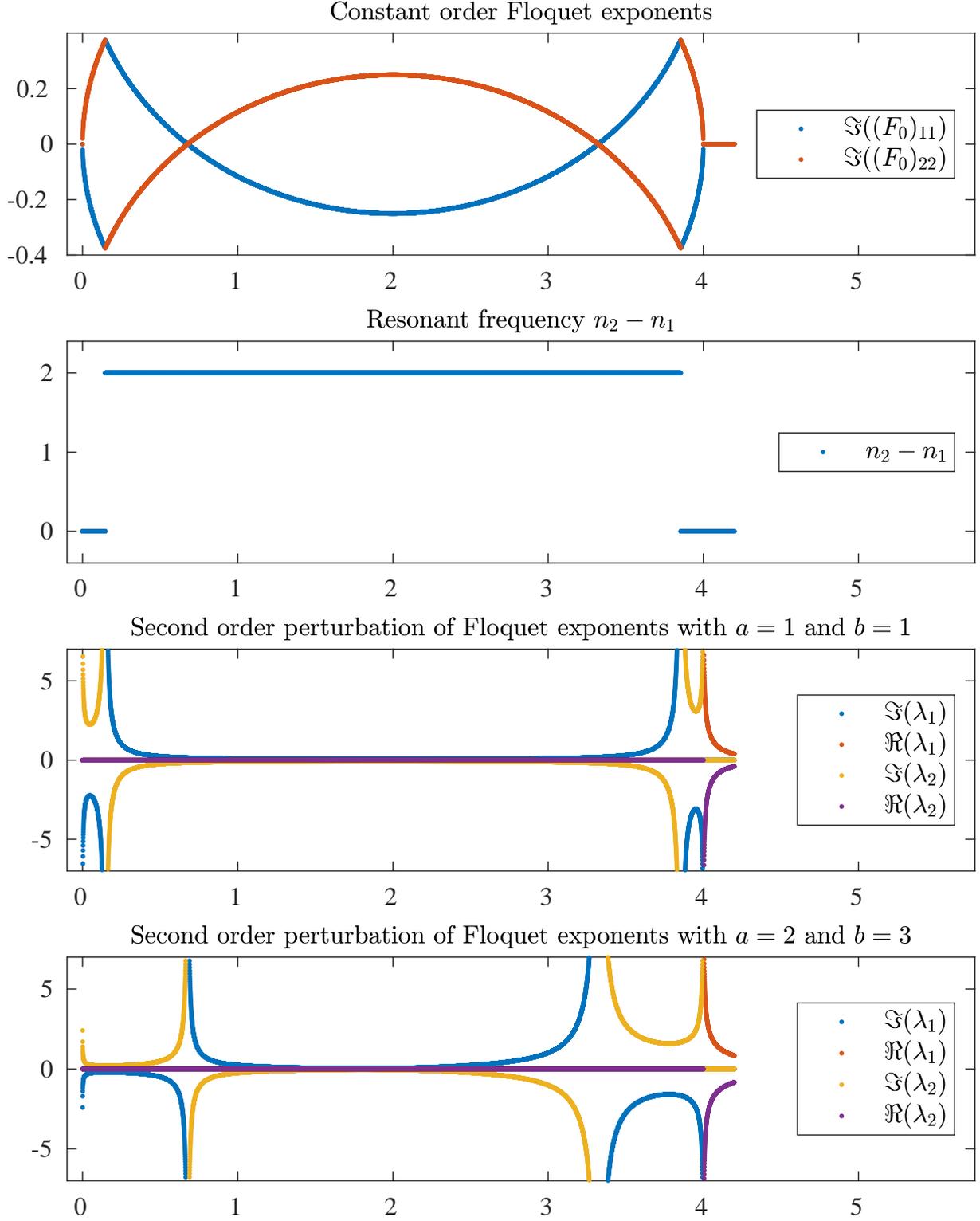}
            \caption{Here, the system is parametrized as in Figure \ref{fig:Plot_of_A0}. The modulation frequency $\Omega= 2\pi/T$ is set to be equal to $0.75$ and the modulation frequencies of $\zeta$ and $\kappa$ are given by $a$ and $b$, respectively. The first row displays the leading order Floquet exponents, that is, the eigenvalues of $F_0$. The second row displays the difference of the folding numbers and the last two rows display the second order eigenvalue perturbation with different modulation modes of $\kappa$ and $\zeta$. One observes that, depending on the modulation frequencies of $\kappa$ and $\zeta$, poles occur at \emph{particular} double eigenvalues of $F_0$. Indeed, in order to obtain a pole at a double eigenvalue of $F_0$, the modulation frequency needs to coincide with the resonant frequency.}
        \label{fig:Floquet_exponent_pertubation_ClaHarmOs_Different_modulation_frequencies}
    \end{figure}
    
    To make the above procedure clearer, we will consider the setting of the classical harmonic oscillator. The eigenvalues of its associated constant system, that is, the unperturbed system, are given by the diagonal entries of $A_0$ and are thus equal to
        $$ -\frac{c \pm \sqrt{c^2 - 4k}}{2}.$$
    We will consider the parametrized system of the classical harmonic oscillator with $k=c$ and $c \in [0,5.5]$, and we will set the modulation period $\Omega$ as $\Omega = 0.75$. Then, a multiple constant Floquet exponent occurs whenever
        $$ -\frac{c - \sqrt{c^2 - 4k}}{2} = -\frac{c + \sqrt{c^2 - 4k}}{2} \mod \i\Omega,$$
    that is, whenever
        $$ c = 2 \pm \sqrt{4 - (n \Omega)^2}, $$
    for some $n \in \mathbb{Z}$ and $c \in [0,4]$.\footnote{Note that the interval $[0,4]$ is strictly smaller than the interval $[0,5.5]$. This is due to the fact that the eigenvalues of the constant system have distinct real parts when $k=c$ and $ c>4$. Thus, they cannot be folded onto each other.} Thus, for $\Omega = 0.75$, multiple Floquet exponents occur at roughly
        $$ c \in \{ 0.146, 0.677, 3.323, 3.854\},$$
    where the first and last values correspond to resonant frequencies $\pm 2$ and the second and third to resonant frequencies $\pm 1$. This can also be observed in Figure \ref{fig:Floquet_exponent_pertubation_ClaHarmOs_Different_modulation_frequencies}. Namely, taking $c = 0.146$, one obtains
        \begin{align*}
            (A_0)_{22} = -\frac{c + \sqrt{c^2 - 4k}}{2} =  -0.0730 - 0.3750\i,\\
            (A_0)_{11} = -\frac{c - \sqrt{c^2 - 4k}}{2} = -0.0730 + 0.3750\i,
        \end{align*}
    and thus 
        \begin{align*}
            &(A_0)_{22} \hspace{23pt}= -0.0730 - 0.3750\i &= (F_0)_{11},\hspace{23pt}\\
            &(A_0)_{11} \hspace{23pt}= -0.0730 - 0.3750\i + \i\Omega &= (F_0)_{22} + \i\Omega,
        \end{align*}
    where one can see that the folding numbers of $(F_0)_{11}$ and $(F_0)_{22}$ are given by $n_0 = 0$ and $n_1 = 1$, respectively. Therefore, the resonant frequencies of the multiple Floquet exponent $(F_0)_{11} = (F_0)_{22}$ are given by $\pm 1$.
    Considering the procedure presented above, one needs to modulate the system with a first order modulation of frequency $-1$ and $1$, in order to perturb the Floquet exponents $(F_0)_{11}$ and $(F_0)_{22}$ linearly. For example, one can take $a=1$ and $b=1$, where the notation of Section \ref{Harmonic_oscillator} is used. The behavior of the second order perturbation of the Floquet exponents is depicted in Figure \ref{fig:Floquet_exponent_pertubation_ClaHarmOs_Different_modulation_frequencies}.\footnote{We refer the reader to Section \ref{sec:exact_formula_F2} for exact formulas for the necessary entries of the second order Floquet exponent matrix $F_2$. Additionally, Section \ref{asymptotic_analysis_of_floquet_exponents} is devoted to the analysis of the asymptotic behavior of Floquet exponents.} In the setting where the modulation frequencies $a$ and $b$ are set to $a= 1$ and $b= 1$, one can clearly see the degeneracy of the second order perturbation at $c = 0.146$. However, there is no degeneracy when one considers the case where the modulation frequencies are given by $a= 2$ and $b= 3$. In the first case, $F_0 + \epsilon F_1$ is given by
        \begin{align*}
            \begin{pmatrix}
        (F_0)_{11} \\
        &(F_0)_{22}
        \end{pmatrix} + \epsilon
        \begin{pmatrix}
        0 & (A^{-1}_1)_{12}\\
        (A^{1}_1)_{21} & 0
        \end{pmatrix},
        \end{align*}
    where 
        \begin{align*}
            (A^{(-1)}_1)_{12} &= \frac{-\alpha-c}{4\alpha} + \frac{e^{-\i\phi}(c+\alpha)^2}{8\alpha k}\\
            & =    0.0000 + 0.6667\i,\\
            (A^{(1)}_1)_{21} &=  \frac{-\alpha+c}{4\alpha} + \frac{e^{\i\phi}(c-\alpha)^2}{8\alpha k}\\
            & =   -0.5000 + 0.5694\i,
        \end{align*}
    when the phase shift $\phi$ between the modulation of the damping constant and the modulation of the spring constant is equal to $\phi = 0$.
    Thus the asymptotic expansions of the Floquet exponents $(f_\epsilon)_{1}$ and $(f_\epsilon)_{2}$ at $c = 0.146$ are given by
        \begin{align*}
            (f_\epsilon)_{1} &= (F_0)_{11} + \epsilon \sqrt{(A^{(-1)}_1)_{12}(A^{(1)}_1)_{21}} + O(\epsilon^2)\\
            &=  -0.0730 - 0.3750\i - \epsilon(0.2506 - 0.6651\i) + O(\epsilon^2)\\
            (f_\epsilon)_{2} &= (F_\epsilon)_{22} + \epsilon \sqrt{(A^{(-1)}_1)_{12}(A^{(1)}_1)_{21}} + O(\epsilon^2)\\
            &=-0.0730 - 0.3750\i + \epsilon(0.2506 - 0.6651\i) + O(\epsilon^2),
        \end{align*}
    but at $c \pm \delta$ for all small $\delta > 0 $, the Floquet exponents are given by
        \begin{align*}
            (f_\epsilon)_i = (F_0)_{ii} + O(\epsilon^2),
        \end{align*}
    and have no linear components in their Taylor expansions.

\subsection{Exact formulas of some particular values for second order}\label{sec:exact_formula_F2}
    We are interested in the asymptotic behavior of Floquet exponents of ODEs that satisfy the conditions of Theorem \ref{Inductive_Identity_for_Lyapunov-Floquet_decomposition}. In that particular setting, we proved before that Floquet exponents are generically  perturbed \emph{quadratically}. Hence, in order to analyze their asymptotic behavior, it is necessary to consider the second order Floquet exponent matrix $F_2$. We seek to determine the eigenvalues of $F_\epsilon = F_0 + \epsilon F_1 + \epsilon^2F_2 + \ldots$ up to order two in Section \ref{asymptotic_analysis_of_floquet_exponents}. To this end, it suffices to determine all coefficients of $F_2$ that correspond to the \emph{same} eigenvalue of $F_0$. That is, one needs to determine
        \begin{align*}
            (F_2)_{kl} \text{ for } k,l \text{ such that } (F_0)_{kk} = (F_0)_{ll}.
        \end{align*}
    To this end, it suffices to compute
        \begin{align*}
            (\Phi^{\frac{T}{2\pi \i}(A_0 -F_0)_{kk}})_{kl},
        \end{align*}
    where $(\Phi^m)_{m \in \mathbb{Z}}$ is given by
        \begin{align*}
            \sum_{m \in \mathbb{Z}}\Phi^m = A_1(t)P_1(t) - P_1(t)F_1 + A_2(t)P_0(t).
        \end{align*}
   
This gives rise to the following theorems. Theorem \ref{fisttheorem} expresses the
entries in terms of $A_0,\,A_1,\,F_0$ and $F_1$, whereas Theorem \ref{fisttheorem2} is in terms of $A_0,\,A_1$ and $F_0$.
    
    \begin{theorem}[Some particular entries of second order Floquet's exponent matrix] \label{fisttheorem}
        With the assumptions of Theorem \ref{Inductive_Identity_for_Lyapunov-Floquet_decomposition}, the choice of $F_0$ made there and the notation $n_k = \frac{T}{2\pi \i}(A_0 - F_0)_{kk}$, it holds that
            \begin{align*}
                (F_2)_{kl} = \biggl(\sum_{m \in \mathbb{Z}}A_1^{n_{k}-m}(t)P^m_1(t)\biggr)_{kl} 
                - (P_1^{n_{k}}(t)F_1)_{kl} + \biggl(\sum_{m \in \mathbb{Z}}A_2^{n_{k}-m}(t)P_0^m(t)\biggr)_{kl},
            \end{align*}
        where
            \begin{align*}
                &\biggl(\sum_{m \in \mathbb{Z}}A_1^{n_{k}-m}(t)P^m_1(t)\biggr)_{kl}
                    = \sum_{j = 1}^N\sum_{m \not= n_{j}-n_l}\frac{(A_1^{n_{k}-n_l-m} - A_1^{n_{k}-n_{j}})_{kj}(A_1^{m})_{jl}}{\frac{2\pi \i}{T}m + (A_0)_{ll} - (A_0)_{jj}},
            \end{align*}
            \begin{align*}
                &(P_1^{n_{k}}(t)F_1)_{kl}
                    = - \sum_{j=1}^N \sum_{m \not= n_{k}-n_j}\frac{(A_1^{m})_{kj}(F_1)_{jl}}{\frac{2\pi \i}{T}m + (A_0)_{jj} -(A_0)_{kk}}
            \end{align*}
            and
            \begin{align*}
                \biggl(\sum_{m \in \mathbb{Z}}A_2^{n_{k}-m}(t)P_0^m(t)\biggr)_{kl} = (A_2^{n_{k} - n_{l}})_{kl}.
            \end{align*}
            That is, $(F_2)_{kl}$ is given by 
            \begin{align*}
                (F_2)_{kl} = \sum_{j = 1}^N\sum_{m \not= n_{j}-n_l}\frac{(A_1^{n_{k}-n_l-m} - A_1^{n_{k}-n_{j}})_{kj}(A_1^{m})_{jl}}{\frac{2\pi \i}{T}m + (A_0)_{ll} - (A_0)_{jj}} + \sum_{j=1}^N \sum_{m \not= n_{k}-n_j}\frac{(A_1^{m})_{kj}(F_1)_{jl}}{\frac{2\pi \i}{T}m + (A_0)_{jj} -(A_0)_{kk}} \\ \nm \qquad \ds  + (A_2^{n_{k} - n_{l}})_{kl}.
            \end{align*}
    \end{theorem}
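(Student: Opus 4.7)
The plan is to apply Theorem \ref{Inductive_Identity_for_Lyapunov-Floquet_decomposition} at order $n=2$, in the branch where $(F_0)_{kk} = (F_0)_{ll}$. There, the formula \eqref{Inductive_formula_F_n} gives $(F_2)_{kl} = (\Phi^{n_k})_{kl}$, where $(\Phi^m)_{m \in \mathbb{Z}}$ are the Fourier coefficients of
\begin{align*}
A_1(t)P_1(t) - P_1(t)F_1 + A_2(t)P_0(t) = \sum_{m \in \mathbb{Z}} \exp\bigl(\tfrac{2\pi \i}{T}mt\bigr)\Phi^m.
\end{align*}
So the whole proof reduces to computing the $n_k$-th Fourier coefficient of each of the three summands separately and reading off the $(k,l)$-entry.

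For the third summand, I would use that $P_0(t) = \exp((A_0-F_0)t)$ is diagonal with $(P_0^m)_{jr} = \delta_{jr}\delta_{m,n_j}$, so the convolution collapses to a single term and gives $(A_2(t)P_0(t))^{n_k}_{kl} = (A_2^{n_k - n_l})_{kl}$. For the second summand, since $F_1$ is constant we simply have $(P_1(t) F_1)^{n_k} = P_1^{n_k} F_1$; I would then substitute the $m = n_k$ branch of \eqref{Inductive_formula_P_n} from Theorem \ref{First_order_Floquet_matrix_and_Lyapunov_transformation} for $(P_1^{n_k})_{kj}$, change variables $\tilde m \mapsto m + n_j$, and use the identity $(A_0)_{jj} = (F_0)_{jj} + \tfrac{2\pi\i}{T}n_j$ to rewrite the denominator as $\tfrac{2\pi\i}{T}m + (A_0)_{jj} - (A_0)_{kk}$, yielding the claimed expression.

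The main technical step is the first summand. Writing the convolution as
\begin{align*}
(A_1(t)P_1(t))^{n_k}_{kl} = \sum_{j=1}^N \sum_{m \in \mathbb{Z}} (A_1^{n_k - m})_{kj}(P_1^m)_{jl},
\end{align*}
I would split the inner sum according to whether $m = n_j$ or $m \neq n_j$, corresponding to the two branches of \eqref{Inductive_formula_P_n} applied to $(P_1(t))_{jl}$. The $m \neq n_j$ part produces a straightforward sum of quotients, while the $m = n_j$ term contributes the whole "correction" sum $-\sum_{\tilde m \neq n_j} (\cdots)/(\cdots)$ multiplied by $(A_1^{n_k - n_j})_{kj}$. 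The key observation is that these two contributions share the same denominator $\tfrac{2\pi\i}{T}\tilde m - (A_0)_{jj} + (F_0)_{ll}$, so they combine cleanly into a single sum over $\tilde m \neq n_j$ whose numerator is the bracket $[(A_1^{n_k - \tilde m})_{kj} - (A_1^{n_k - n_j})_{kj}](A_1^{\tilde m - n_l})_{jl}$. A final change of variables $m \mapsto \tilde m - n_l$, together with the identity $(A_0)_{ll} - (F_0)_{ll} = \tfrac{2\pi\i}{T} n_l$, converts the range $\tilde m \neq n_j$ into $m \neq n_j - n_l$ and the denominator into $\tfrac{2\pi\i}{T}m + (A_0)_{ll} - (A_0)_{jj}$, matching the stated formula exactly.

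The hard part is purely bookkeeping: keeping the index shifts $(n_k, n_l, n_j)$, the distinct exclusion conditions ($m \neq n_k - n_j$ in the $(P_1 F_1)$ term versus $m \neq n_j - n_l$ in the $(A_1 P_1)$ term), and the sign from the $P_1(t)F_1$ contribution all consistent. No new analytic ingredient is needed beyond Theorems \ref{Inductive_Identity_for_Lyapunov-Floquet_decomposition} and \ref{First_order_Floquet_matrix_and_Lyapunov_transformation}; once the three Fourier coefficients are computed and assembled, adding them gives the claimed closed form for $(F_2)_{kl}$.
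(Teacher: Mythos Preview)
Your proposal is correct and follows exactly the approach the paper takes: the paper's proof simply states that it is ``completely analogous to the proof of Theorem~\ref{First_order_Floquet_matrix_and_Lyapunov_transformation}'', which in turn consists of applying the inductive formula \eqref{Inductive_formula_F_n} of Theorem~\ref{Inductive_Identity_for_Lyapunov-Floquet_decomposition}, computing the relevant Fourier coefficients, and re-indexing. Your write-up carries out precisely these computations in more detail than the paper does, with all the index shifts and the combination of the two branches of \eqref{Inductive_formula_P_n} handled correctly.
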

    
    Expressing also the first order Floquet exponent matrix in terms of $A_0,\, A_1$ and $F_0$, leads to longer expressions, which are given by the following theorem.

    \begin{theorem}[Some particular entries of second order Floquet's exponent matrix] \label{fisttheorem2}
        With the assumptions of Theorem \ref{Inductive_Identity_for_Lyapunov-Floquet_decomposition}, the choice of $F_0$ made there and the notation $n_k = \frac{T}{2\pi \i}(A_0 - F_0)_{kk}$, it holds that
            \begin{align*}
                (F_2)_{kl} = \biggl(\sum_{m \in \mathbb{Z}}A_1^{n_{k}-m}(t)P^m_1(t)\biggr)_{kl} 
                - (P_1^{n_{k}}(t)F_1)_{kl} + \biggl(\sum_{m \in \mathbb{Z}}A_2^{n_{k}-m}(t)P_0^m(t)\biggr)_{kl},
            \end{align*}
        where
            \begin{align*}
                &\biggl(\sum_{m \in \mathbb{Z}}A_1^{n_{k}-m}(t)P^m_1(t)\biggr)_{kl} = \sum_{j = 1}^N\sum_{m \not= n_{j}-n_l}\frac{(A_1^{n_{k}-n_l-m} - A_1^{n_{k}-n_{j}})_{kj}(A_1^{m})_{jl}}{\frac{2\pi \i}{T}m + (A_0)_{ll} - (A_0)_{jj}},
            \end{align*}
            \begin{align*}
                &(P_1^{n_{k}}(t)F_1)_{kl} \\
                    &= -\sum_{j = 1}^N\begin{cases}
                        \ds \sum_{m \not= n_{k}-n_j}\frac{(A_1^{m})_{kj}(A_1^{n_{j} - n_{l}})_{jl}}{\frac{2\pi \i}{T}m + (A_0)_{jj} -(A_0)_{kk}} \quad \text{ if } (F_0)_{jj} = (F_0)_{ll},\\
                        \nm \ds
                        \bigl((F_0)_{ll} - (F_0)_{jj}\bigr)\Biggl(\sum_{m \not= n_{k}-n_j}\frac{(A_1^{m})_{kj}}{\frac{2\pi \i}{T}m + (A_0)_{jj} -(A_0)_{kk}}\Biggr)\biggl(\sum_{\Tilde{m} \in \mathbb{Z}}\frac{(A_1^{\Tilde{m}})_{jl}}{\frac{2\pi \i}{T}\Tilde{m} + (A_0)_{ll} - (A_0)_{jj}}\biggr) \\ \nm \qquad \qquad \ds \text{ if } (F_0)_{jj} \not= (F_0)_{ll},
                    \end{cases}
            \end{align*}
            and
            \begin{align*}
                \biggl(\sum_{m \in \mathbb{Z}}A_2^{n_{k}-m}(t)P_0^m(t)\biggr)_{kl} = (A_2^{n_{k} - n_{l}})_{kl}.
            \end{align*}
    \end{theorem}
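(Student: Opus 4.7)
The plan is to obtain Theorem \ref{fisttheorem2} as a direct corollary of Theorem \ref{fisttheorem} by substituting the closed-form expression for the entries of $F_1$ provided by Theorem \ref{First_order_Floquet_matrix_and_Lyapunov_transformation}. Indeed, Theorem \ref{fisttheorem} already decomposes $(F_2)_{kl}$ (for indices $k,l$ with $(F_0)_{kk} = (F_0)_{ll}$) into three summands arising from the three contributions $A_1(t)P_1(t)$, $P_1(t)F_1$, and $A_2(t)P_0(t)$ in the inhomogeneous part of the $n=2$ equation in \eqref{Floquet_decomposition_equation_general_order}. Two of these three summands, namely the $A_1P_1$ term and the $A_2P_0$ term, are identical in the statements of Theorem \ref{fisttheorem} and Theorem \ref{fisttheorem2}; therefore only the $(P_1^{n_k}F_1)_{kl}$ term has to be rewritten.

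Starting from the expression
\[
(P_1^{n_k}(t)F_1)_{kl} = -\sum_{j=1}^{N}\sum_{m\neq n_k-n_j}\frac{(A_1^{m})_{kj}(F_1)_{jl}}{\frac{2\pi \i}{T}m + (A_0)_{jj}-(A_0)_{kk}},
\]
one substitutes, for each fixed $j$, the appropriate case of the formula for $(F_1)_{jl}$ from Theorem \ref{First_order_Floquet_matrix_and_Lyapunov_transformation}. If $(F_0)_{jj} = (F_0)_{ll}$, then $(F_1)_{jl} = (A_1^{n_j-n_l})_{jl}$ and the inner double sum becomes exactly the first branch in the case distinction of Theorem \ref{fisttheorem2}. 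If instead $(F_0)_{jj}\neq (F_0)_{ll}$, then $(F_1)_{jl}$ is itself an infinite series indexed by some auxiliary variable $\tilde{m}$, which, once pulled outside the sum over $m$, produces the product of two series structure that appears in the second branch, with the prefactor $(F_0)_{ll}-(F_0)_{jj}$ surviving unchanged.

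The main obstacle I anticipate is purely bookkeeping: keeping the summation indices $m$ and $\tilde{m}$ disentangled, respecting the exclusion $m \neq n_k - n_j$ that comes from the fact that $P_1$ has no Fourier coefficient at the resonant index (cf.\ the second branch of \eqref{Inductive_formula_P_n}), and ensuring that the re-indexing is consistent when $(F_0)_{jj}\neq (F_0)_{ll}$ so that the sum over $\tilde{m}$ runs over all of $\mathbb{Z}$ as stated. Once these bookkeeping checks are carried out, collecting the two branches of the substitution yields exactly the formula in the statement, and no further analytic input is required beyond what is already contained in Theorems \ref{Inductive_Identity_for_Lyapunov-Floquet_decomposition}, \ref{First_order_Floquet_matrix_and_Lyapunov_transformation}, and \ref{fisttheorem}.
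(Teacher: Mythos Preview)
Your proposal is correct and matches the paper's approach: the paper itself says the proof is ``completely analogous to the proof of Theorem \ref{First_order_Floquet_matrix_and_Lyapunov_transformation}'' and the text immediately preceding Theorem \ref{fisttheorem2} makes explicit that it is obtained from Theorem \ref{fisttheorem} by ``expressing also the first order Floquet exponent matrix in terms of $A_0,\,A_1$ and $F_0$,'' which is exactly the substitution of $(F_1)_{jl}$ from Theorem \ref{First_order_Floquet_matrix_and_Lyapunov_transformation} that you describe. Your bookkeeping observations (the exclusion $m\neq n_k-n_j$ coming from the resonant index in \eqref{Inductive_formula_P_n}, and the unrestricted range of $\tilde{m}$ in the case $(F_0)_{jj}\neq(F_0)_{ll}$ because the denominator cannot vanish there) are precisely the checks needed.
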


\begin{proof}
    Both proofs are completely analogous to the proof of Theorem \ref{First_order_Floquet_matrix_and_Lyapunov_transformation}.
\end{proof}
    
\subsection{Asymptotic expansion of the fundamental solution}\label{sec:asymp_exp_of_fund_sol}

Being in the setting of Theorem \ref{Inductive_Identity_for_Lyapunov-Floquet_decomposition}, that is, considering the ODE
    \begin{align*}
        \frac{d x}{dt} = A_\epsilon(t)x,
    \end{align*}
where
    \begin{align*}
        A_\epsilon = A_0 + \epsilon A_1(t) + \epsilon^2A_2(t) + O(\epsilon^3)
    \end{align*}
depends analytically  on the parameter $\epsilon$, then  the associated fundamental solution $X_\epsilon(t)$ depends analytically  on $\epsilon$ and has the following asymptotic expansion as $\epsilon  \rightarrow 0$: 
    \begin{align*}
        X_\epsilon&(t) = P_0(t)\exp(F_0t) + \epsilon\Big(P_1(t)\exp(F_0t) + 2P_0(t)\sum_{k = 1}^\infty\sum_{i=0}^{k-1} F_0^i F_1 F_0^{k-i-1} \frac{t^k}{k!}\Big)\\
        &+ \epsilon^2\Big( P_2(t)\exp(F_0t) + \sum_{k = 1}^\infty\sum_{i=0}^{k-1}\Big( P_1(t)F_0^i F_1 F_0^{k-i-1} + P_0(t)F_0^i F_2 F_0^{k-i-1} \\ \nm & + 2\sum_{j=0}^{i-1}F_0^j F_1 F_0^{i-j-1} F_1 F_0^{k-i-1}\Big)\frac{t^k}{k!}\Big)
        + O(\epsilon^3),
       \end{align*}
which is valid uniformly for $t$ in any compact subset of $\R$. This provides a possibility for further research. Depending on the operator norms of $F_0, \, F_1$ and $F_2$, one could estimate the number of terms needed for obtaining a reliable approximation of the fundamental solution. This would provide a new technique and supplement the usual procedure of applying standard ODE-algorithms which might be unstable when the coefficients of the corresponding ODE are modulated.

\section{Asymptotic analysis of Floquet exponents}\label{asymptotic_analysis_of_floquet_exponents}

The formulas for the Floquet exponent matrix $F_\epsilon = F_0 + \epsilon F_1 + \epsilon^2 F_2 + O(\epsilon^3)$ now allow us to deduce asymptotic equations for the Floquet exponents of the linear system of ODEs
    \begin{align*}
        \frac{d x}{dt} = A_\epsilon(t)x.
    \end{align*}
In the following sections, asymptotic analysis of Floquet exponents will be established. That is, the eigenvalues of $F_\epsilon$ will be studied asymptotically in terms of $\epsilon$. To this end, the two cases introduced in Section \ref{sec:exact_formula_F1} will be treated.  Thus, the first case will be when $(F_0)_{ii}$ is a simple eigenvalue and the second case will be when $(F_0)_{ii}$ is a multiple eigenvalue of $F_0$. The first case will be called \emph{non-degenerate} and the second one will be called \emph{degenerate}. 

For both cases, the following result will be used to perform asymptotic analysis of their respective eigenvalues. We have the following standard result (see, e.g. \cite{jinghao,qmimperial}).

\begin{lemma}\label{Eigenvalue_pertubation}
    Let $M_{\epsilon} = M_0 + \epsilon M_1 + \epsilon^2 M_2 + O(\epsilon^3)$ be a diagonalizable matrix such that its eigenvalues and eigenvectors depend analytically on $\epsilon$. Furthermore, suppose that $M_0$ is diagonal. Then the eigenvalues of $M_\epsilon$ are given by
        $$ (M_0)_{ii} + \epsilon \lambda + O(\epsilon^3),$$
    where $\lambda = \lambda_1 + \epsilon \lambda_2$ is an eigenvalue of 
        \begin{align*}
            M_1 &+ \epsilon(M_1G_iM_1 + M_2)\\ &\text{ restricted to the respective eigenspace } \Span(e_j)_{(M_0)_{jj} = (M_0)_{ii}},
        \end{align*}
    with $G_i$ being a diagonal matrix given by
        \begin{align*}
            (G_i)_{jj} = \begin{cases} 0 &\text{ if } (M_0)_{jj} = (M_0)_{ii}, \\
            \frac{1}{(M_0)_{ii} - (M_0)_{jj}} &\text{ if } (M_0)_{jj} \not= (M_0)_{ii}.
            \end{cases}
        \end{align*}
\end{lemma}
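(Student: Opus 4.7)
The plan is to carry out standard (degenerate) Rayleigh--Schr\"odinger perturbation theory for $M_\epsilon$, organised so that the first- and second-order contributions bundle into the single effective matrix $M_1 + \epsilon(M_1 G_i M_1 + M_2)$ restricted to $V := \Span(e_j)_{(M_0)_{jj} = (M_0)_{ii}}$. Let $P$ denote the coordinate projection onto $V$ and $Q = I - P$. By hypothesis there exists an analytic eigenpair $(v_\epsilon, \mu_\epsilon)$ with leading part in $V$, so I write $v_\epsilon = v_0 + \epsilon v_1 + \epsilon^2 v_2 + O(\epsilon^3)$ and $\mu_\epsilon = (M_0)_{ii} + \epsilon \lambda_1 + \epsilon^2 \lambda_2 + O(\epsilon^3)$, and then match powers of $\epsilon$ in the eigenvalue equation $M_\epsilon v_\epsilon = \mu_\epsilon v_\epsilon$.

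At order $\epsilon^0$ the equation $M_0 v_0 = (M_0)_{ii} v_0$ confirms $v_0 \in V$. At order $\epsilon$ one obtains $(M_0 - (M_0)_{ii} I)\, v_1 = (\lambda_1 I - M_1)\, v_0$. Applying $P$ kills the left-hand side and yields $P M_1 v_0 = \lambda_1 v_0$, so $\lambda_1$ is an eigenvalue of $M_1$ restricted to $V$ with eigenvector $v_0$. Applying $Q$ instead, the operator $M_0 - (M_0)_{ii} I$ is invertible on $\mathrm{image}(Q)$, and by the definition of $G_i$ its inverse there acts as $-G_i$; hence $Q v_1 = -G_i(-Q M_1 v_0) = G_i M_1 v_0$, where I used $Q v_0 = 0$ together with $G_i P = 0$. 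The component $P v_1$ is pure normalisation freedom, which I fix to $0$.

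At order $\epsilon^2$ the matched equation reads $(M_0 - (M_0)_{ii} I)\, v_2 = (\lambda_1 I - M_1)\, v_1 + (\lambda_2 I - M_2)\, v_0$. Projecting with $P$ again annihilates the left-hand side; after substituting $v_1 = G_i M_1 v_0$ (using $P v_1 = 0$) and $P v_0 = v_0$, the identity rearranges to $\lambda_2 v_0 = P(M_1 G_i M_1 + M_2)\, v_0$. Combining with the order-$\epsilon$ identity, I obtain on $V$
\begin{align*}
    (M_1 + \epsilon (M_1 G_i M_1 + M_2))|_V \, v_0 = (\lambda_1 + \epsilon \lambda_2)\, v_0 + O(\epsilon^2),
\end{align*}
so $\lambda = \lambda_1 + \epsilon \lambda_2$ is an eigenvalue of the $\epsilon$-dependent effective matrix on $V$ up to the asserted error, and substituting back into $\mu_\epsilon$ yields $(M_0)_{ii} + \epsilon \lambda + O(\epsilon^3)$ as claimed.

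The main bookkeeping point is the sign convention for $G_i$ and the normalisation $P v_1 = 0$, both of which must be consistent in order to compress two orders of perturbation into the single effective matrix appearing in the statement. Analyticity of $v_\epsilon$ and $\mu_\epsilon$ is assumed, so no Kato-type regularity argument is required; and the possibility that $M_1|_V$ is itself degenerate simply passes the problem to a further level of degenerate perturbation theory, which the lemma already accommodates by allowing $\lambda$ to vary over all eigenvalues of the effective matrix.
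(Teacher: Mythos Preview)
The paper does not prove this lemma; it is quoted as a standard result with references. Your approach---order-by-order matching in the eigenvalue equation, i.e.\ Rayleigh--Schr\"odinger degenerate perturbation theory---is exactly the right one, and the computation through order $\epsilon$ is correct.

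There is, however, a genuine gap at the step ``$P v_1$ is pure normalisation freedom, which I fix to $0$.'' Rescaling the analytic eigenvector $v_\epsilon$ by an analytic scalar only shifts $P v_1$ by a multiple of $v_0$; when $\dim V > 1$ it cannot remove the component of $P v_1$ transverse to $v_0$ inside $V$. A two-line example exposes this: take $M_0 = 0$, $M_1 = \mathrm{diag}(1,2)$, $M_2 = \bigl(\begin{smallmatrix}0&1\\1&0\end{smallmatrix}\bigr)$. Then $V=\mathbb{C}^2$, $G_i=0$, $v_0=e_2$, $\lambda_1=2$, and the analytic eigenvector forces $Pv_1=e_1$; your identity $\lambda_2 v_0 = P(M_1G_iM_1+M_2)v_0$ would read $\lambda_2 e_2 = e_1$, which is inconsistent.

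The fix is short. Keep the $Pv_1$ term: the projected order-$\epsilon^2$ equation is
\[
(\lambda_1 I - M_1|_V)(Pv_1) + \lambda_2 v_0 \;=\; P(M_1G_iM_1+M_2)\,v_0,
\]
and pairing with a \emph{left} eigenvector $u_0$ of $M_1|_V$ at $\lambda_1$ kills the first term, yielding $\lambda_2 = u_0^*(M_1G_iM_1+M_2)|_V\,v_0\big/(u_0^*v_0)$. This is exactly the first-order eigenvalue correction of the effective matrix $M_1|_V + \epsilon(M_1G_iM_1+M_2)|_V$ at the unperturbed eigenpair $(\lambda_1,v_0)$, so $\lambda_1+\epsilon\lambda_2$ agrees with an eigenvalue of the effective matrix to $O(\epsilon^2)$, which is what the lemma asserts. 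With this correction the rest of your write-up stands.
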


Phrased differently, the eigenvalues of $M_\epsilon$ are asymptotically given up to third order in $\epsilon$ by the eigenvalues of the \emph{effective Hamiltonian}
    \begin{align*}
        (M_0 + \epsilon M_1 + \epsilon^2(M_1G_iM_1 + M_2) )\big|_{\Span(e_j)_{(M_0)_{jj} = (M_0)_{ii}}},
    \end{align*}
with $i \in \{1,\ldots,N\}$.

Before treating the degenerate and non-degenerate cases separately, some general results will be introduced in the following section.

\subsection{First observations}

In Section \ref{Chapter:Asymptotic_Floquet_theory}, it was explained how different choices of constant order Floquet exponent matrices would be possible and how they lead to different decompositions of the fundamental solution $X$ of the linear system of ODEs ${dx}/{dt} = A_\epsilon(t)x$. Nevertheless, as long as the choice of a lifting to a fundamental domain is consistent, the higher order perturbations of the respective Floquet exponents, that is, of the respective eigenvalues of $F_\epsilon$, are independent of the choice of $F_0$.

\begin{lemma}[Perturbations of Floquet exponents are independent of the choice of a representative]
    With the assumptions of Theorem \ref{Inductive_Identity_for_Lyapunov-Floquet_decomposition}, it holds that the eigenvalues of $F_\epsilon$ have uniquely defined higher order Taylor coefficients, that is, if $f_\epsilon = f_0 + \epsilon f_1 + \epsilon^2 f_2 + O(\epsilon^3)$ is an eigenvalue of $F_\epsilon$, then $f_1,f_2,\ldots$ are uniquely defined and thus they are independent of the choice of $f_0$, that is $F_0$.\footnote{In fact, the statement holds in a wider generality. The same proof applies whenever the Floquet exponents depend analytically on the parameter $\epsilon$ of the parametrized system of ODEs.}
\end{lemma}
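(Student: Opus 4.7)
The plan is to exploit the fact that although the Floquet exponent matrix $F_\epsilon$ depends on the chosen representative $F_0$, the associated \emph{Floquet multipliers}—the eigenvalues of the monodromy matrix $X_\epsilon(T)$—are intrinsic to the ODE and do not depend on any choice of decomposition.

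First I would observe that, for any Floquet--Lyapunov decomposition $X_\epsilon(t) = P_\epsilon(t)\exp(F_\epsilon t)$ satisfying the conditions of Theorem \ref{Inductive_Identity_for_Lyapunov-Floquet_decomposition}, the $T$-periodicity of $P_\epsilon$ combined with $P_\epsilon(0) = \Id$ gives
\[
    X_\epsilon(T) \;=\; P_\epsilon(T)\exp(F_\epsilon T) \;=\; \exp(F_\epsilon T).
\]
Consequently the eigenvalues of $\exp(F_\epsilon T)$ are exactly the Floquet multipliers $\mu_\epsilon^{(1)},\dots,\mu_\epsilon^{(N)}$ of the equation $dx/dt = A_\epsilon(t)x$, and these depend only on $A_\epsilon$, not on $F_0$.

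Next, suppose two different choices of representative yield Floquet exponent matrices $F_\epsilon$ and $\tilde F_\epsilon$, and let $f_\epsilon = f_0 + \epsilon f_1 + \epsilon^2 f_2 + \cdots$ and $\tilde f_\epsilon = \tilde f_0 + \epsilon \tilde f_1 + \epsilon^2 \tilde f_2 + \cdots$ be analytic eigenvalues of $F_\epsilon$ and $\tilde F_\epsilon$, respectively, associated with the same Floquet multiplier branch $\mu_\epsilon$. Then for every $\epsilon$ in the common domain of analyticity,
\[
    \exp(f_\epsilon T) \;=\; \mu_\epsilon \;=\; \exp(\tilde f_\epsilon T),
\]
so that $f_\epsilon - \tilde f_\epsilon \in \tfrac{2\pi \i}{T}\Z$ pointwise. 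The map $\epsilon \mapsto f_\epsilon - \tilde f_\epsilon$ is analytic (hence continuous) on a connected neighborhood of $0$ and its image lies in a discrete set, so it must be constant. Evaluating at $\epsilon = 0$ identifies this constant as $f_0 - \tilde f_0$, whence $f_j = \tilde f_j$ for every $j \geq 1$, proving the claim.

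The main point of care is the \emph{identification of eigenvalue branches} across the two decompositions: once one accepts that both eigenvalues are analytic in $\epsilon$ (guaranteed in the setting of Theorem \ref{Inductive_Identity_for_Lyapunov-Floquet_decomposition}, and more generally whenever Taylor expansions exist as noted in the footnote), the pairing $f_\epsilon \leftrightarrow \tilde f_\epsilon$ is made through the shared Floquet multiplier $\mu_\epsilon$ and the remainder of the argument is essentially the elementary observation that an analytic function into a discrete set is constant. No explicit manipulation of the inductive formulas \eqref{Inductive_formula_F_n}--\eqref{Inductive_formula_P_n} is needed.
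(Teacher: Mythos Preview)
Your proof is correct and follows essentially the same strategy as the paper: both argue that the Floquet multipliers $\mu_\epsilon$ are intrinsic to the ODE, so any two analytic exponent branches satisfy $\exp(f_\epsilon T) = \exp(\tilde f_\epsilon T)$ for all small $\epsilon$. Your conclusion via the observation that $f_\epsilon - \tilde f_\epsilon$ is a continuous map into the discrete set $\tfrac{2\pi \i}{T}\Z$ (hence constant on a connected neighborhood of $0$) is slightly cleaner than the paper's, which instead differentiates $\mu_\epsilon = \exp(f_\epsilon T)$ with respect to $\epsilon$ and matches the Taylor coefficients inductively.
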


\begin{proof}
    Let $X_\epsilon$ denote the fundamental solution of the $T$-periodic equation ${dx}/{dt} = A_\epsilon(t)x$ and denote by $\xi_1, \ldots, \xi_N$ the eigenvalues of $X_\epsilon(T)$. Then the Floquet exponents are defined by $\log(\xi_1),\ldots, \log(\xi_N)$ and \emph{depend} on the chosen logarithm branch. Let $j \in \{1,\ldots,N\}$, then 
        $$\xi_j = \exp(f_0 + \epsilon f_1 + \epsilon^2 f_2 + O(\epsilon^3))$$
    for some $f_0,f_1,f_2,\ldots$, which are given by some eigenvalues of a \emph{certain} choice of $F_\epsilon$. That is, $f_0 + \epsilon f_1 + \epsilon^2 f_2 + O(\epsilon^3)$ is some eigenvalue of a certain choice of Floquet exponent matrix $F_\epsilon$. Let $g_0 + \epsilon g_1 + \epsilon^2 g_2 + O(\epsilon^3)$ be the eigenvalue of another choice of $F_\epsilon$ such that 
        $$\xi_j = \exp(g_0 + \epsilon g_1 + \epsilon^2 g_2 + O(\epsilon^3)).$$
    Since $\xi_j$ is uniquely defined by $X_\epsilon$, it follows that 
        \begin{align*}
            \frac{d \xi_j}{d\epsilon} &= \frac{d}{d\epsilon}\exp(f_0 + \epsilon f_1 + \epsilon^2 f_2 + O(\epsilon^3)) \\
            &= \exp(f_0 + \epsilon f_1 + \epsilon^2 f_2 + O(\epsilon^3))(f_1 + \epsilon f_2 + O(\epsilon^2))
        \end{align*}
    is also uniquely defined and thus, for $\epsilon = 0$, one obtains
        \begin{align*}
            \exp(f_0)f_1=\exp(g_0)g_1.
        \end{align*}
    Now, $\exp(f_0) = \xi_j|_{\epsilon = 0} = \exp(g_0)$ implies that $f_1 = g_1$. It then follows  inductively that all higher order Taylor coefficients of $f_\epsilon$ and $g_\epsilon$ need to coincide and thus it follows that $f_n$ is uniquely defined for all $n \geq 1$.
\end{proof}

Another general behavior is that (as for the eigenvalues of a real-valued matrix) the Floquet exponents of an ODE with real coefficients occur in conjugate pairs. 

\begin{lemma} \label{conjugate}
    Let $A(t)$ be a matrix valued function and assume that the associated linear system of ODEs ${dx}/{dt} = A(t)x$ has a unique fundamental solution. It holds that whenever $A$ is real-valued, the associated Floquet exponents $f_1, \ldots, f_N$ occur in complex conjugate pairs modulo ${2\pi \i}/{T}$.
\end{lemma}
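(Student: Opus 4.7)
The plan is to reduce the statement to the elementary fact that a real matrix has eigenvalues occurring in complex conjugate pairs, applied to the monodromy matrix $X(T)$.

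First I would observe that since $A(t)$ is real-valued, the fundamental solution $X(t)$ defined by \eqref{fundamental_solution_equation} is itself real-valued. This is because the initial condition $X(0) = \Id_{N \times N}$ is real, and the uniqueness of solutions to the linear ODE with real coefficients forces $X(t)$ to coincide with its complex conjugate. Consequently, the monodromy matrix $X(T)$ is a real $N \times N$ matrix, and its characteristic polynomial has real coefficients.

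Next, I would invoke Floquet's Theorem \ref{Floquet-Bloch_theorem} together with the $T$-periodicity of $P$ to write $X(T) = P(0)\exp(FT) = \exp(FT)$. The Floquet multipliers, i.e.\ the eigenvalues $\mu_1, \dots, \mu_N$ of $X(T)$, are thus related to the Floquet exponents $f_1, \dots, f_N$ by $\mu_j = \exp(f_j T)$, and the exponents are defined modulo $2\pi \i / T$ precisely because of this exponential relation. Since the $\mu_j$ are roots of a real polynomial, they come in complex conjugate pairs (a real eigenvalue being its own conjugate).

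Finally, I would translate this symmetry from multipliers to exponents. If $\mu_k = \overline{\mu_j}$, then choosing any branch of the logarithm, $\log(\mu_k) = \overline{\log(\mu_j)} + 2\pi \i n$ for some integer $n$, since $\exp(\overline{\log(\mu_j)}) = \overline{\mu_j} = \mu_k$. Dividing by $T$, this gives $f_k \equiv \overline{f_j} \pmod{2\pi \i / T}$, which is exactly the stated conclusion. The only mild subtlety is the branch ambiguity in the logarithm, but this is absorbed harmlessly into the ``modulo $2\pi \i / T$'' clause, so no real obstacle arises; the argument is essentially a one-line reduction to the real-matrix eigenvalue symmetry.
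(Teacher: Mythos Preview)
Your proof is correct and follows essentially the same approach as the paper: both arguments use uniqueness of the fundamental solution together with the reality of $A(t)$ to conclude that $X(T)$ is real, then deduce that the Floquet multipliers $\exp(f_jT)$ come in conjugate pairs, and finally pass to the exponents modulo $2\pi\i/T$. Your version is slightly more explicit about the logarithm branch ambiguity, but the underlying reasoning is identical.
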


\begin{proof}
    Let $f_i$ be a Floquet exponent of the $T$-periodic linear system of ODEs
        $$\frac{dx}{dt} = A(t)x,$$
    and denote by $X$ its fundamental solution. Then
        $\exp(f_iT)$
    is an eigenvalue of $X(T)$. Since ${dx}/{dt} = A(t)x$ is preserved under complex conjugation it follows that $\bar{X}$ is also a fundamental solution to the linear system of ODEs. Since the fundamental solution is unique, it thus follows that 
        $\exp(\bar{f}_iT)$
    is also an eigenvalue of $X(T)$. Therefore $\bar{f}_i$ and $f_i$ are Floquet exponents of the ODE ${dx}/{dt} = A(t)x$.
\end{proof}

The above phenomenon can actually be observed in Figure \ref{fig:Floquet_exponent_pertubation_ClaHarmOs_Different_modulation_frequencies} for the harmonic oscillator setting. Actually, in the setting of the classical harmonic oscillator, $A_0$ and $A_1$ are \emph{not} real-valued. However, they originate from a differential equation with real coefficients. Since the Floquet exponents are invariant under linear transformation of the ODE, it suffices to require that the coefficient matrix can be \emph{transformed} into a real-valued coefficient matrix. The phenomenon of conjugate Floquet exponents will also be present in the application section (Section \ref{ch:metamaterials}) on metamaterials. 

\subsection{Non-degenerate case of asymptotic analysis of Floquet exponent}

In the non-degenerate setting, we are interested in how the simple eigenvalue $(F_0)_{ii}$ is perturbed. To this end, the following corollary of Lemma \ref{Eigenvalue_pertubation} will be useful.

\begin{cor}[Eigenvalue perturbation for simple constant order eigenvalue]\label{cor:eigenvalue_perturbation_for_single_eigenvalue}
    Let $M_\epsilon$ be as in Lemma \ref{Eigenvalue_pertubation} and suppose that $(M_0)_{ii}$ is a simple eigenvalue of $M_0$. Then 
        $$(M_0)_{ii} + \epsilon(M_1)_{ii} + \epsilon^2\left(\sum_{j = 1,\\ j\not=i}^{N}\frac{(M_1)_{ij}(M_1)_{ji}}{(M_0)_{ii}-(M_0)_{jj}} + (M_2)_{ii}\right) + O(\epsilon^3)$$
    is an eigenvalue of $M_\epsilon$.
\end{cor}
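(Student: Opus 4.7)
The plan is to apply Lemma~\ref{Eigenvalue_pertubation} directly, specialized to the case where the eigenspace associated with $(M_0)_{ii}$ is one-dimensional. Since $(M_0)_{ii}$ is simple by assumption, the set $\{j : (M_0)_{jj} = (M_0)_{ii}\}$ reduces to the singleton $\{i\}$, and the span on which the effective matrix must be restricted is simply $\mathrm{Span}(e_i)$. Consequently, the restricted operator is a $1 \times 1$ matrix, and its single entry is itself the eigenvalue we seek, up to $O(\epsilon^3)$.

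First, I would write out the effective matrix furnished by Lemma~\ref{Eigenvalue_pertubation}, namely
\[
M_0 + \epsilon M_1 + \epsilon^2 (M_1 G_i M_1 + M_2),
\]
and observe that its $(i,i)$-entry contributes the requested asymptotic expansion. The constant-order contribution is $(M_0)_{ii}$ and the linear contribution is $(M_1)_{ii}$; these are immediate. The only computation to carry out is the expansion of $(M_1 G_i M_1)_{ii}$.

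Second, I would compute $(M_1 G_i M_1)_{ii} = \sum_{j=1}^{N} (M_1)_{ij} (G_i)_{jj} (M_1)_{ji}$, using the diagonality of $G_i$. The definition of $G_i$ gives $(G_i)_{ii}=0$ and $(G_i)_{jj} = 1/((M_0)_{ii} - (M_0)_{jj})$ for $j \neq i$, so the sum reduces to
\[
\sum_{\substack{j = 1 \\ j \neq i}}^{N} \frac{(M_1)_{ij}(M_1)_{ji}}{(M_0)_{ii} - (M_0)_{jj}}.
\]
Adding $(M_2)_{ii}$ recovers exactly the quadratic coefficient stated in the corollary.

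There is no real obstacle here, since simplicity of $(M_0)_{ii}$ removes the need to diagonalize a non-trivial restricted matrix and guarantees analytic dependence of the corresponding eigenvalue and eigenvector in a neighbourhood of $\epsilon = 0$, which is the hypothesis required to invoke Lemma~\ref{Eigenvalue_pertubation}. The only point worth mentioning in passing is that the denominators $(M_0)_{ii} - (M_0)_{jj}$ are nonzero precisely because $(M_0)_{ii}$ is simple, so no division by zero occurs.
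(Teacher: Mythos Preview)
Your proposal is correct and matches the paper's approach: the corollary is stated without proof precisely because it is the one-dimensional specialization of Lemma~\ref{Eigenvalue_pertubation}, and you have carried out exactly that specialization, including the only nontrivial step of expanding $(M_1 G_i M_1)_{ii}$.
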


This leads to the following result.

\begin{lemma}[Simple Floquet exponents are perturbed quadratically]\label{lem:single_Floquet_exponents_are_perturbed_quadratically}
    In the setting of Theorem \ref{Inductive_Identity_for_Lyapunov-Floquet_decomposition}, it holds that if $(F_0)_{ii}$ is a simple constant order Floquet exponent and the first order perturbation $A_1$ has no constant component, that is, if
        $$A_1^{(0)} = 0,$$
    then $(F_0)_{ii}$ is not perturbed linearly, but quadratically. That is, the Floquet exponent $f_i(\epsilon)$ corresponding to $(F_0)_{ii}$ takes the form
        $$ f_i(\epsilon) = (F_0)_{ii} + O(\epsilon^2).$$  
\end{lemma}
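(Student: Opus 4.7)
The plan is to combine two ingredients already established in the paper: the eigenvalue perturbation formula for a simple constant-order eigenvalue (Corollary~\ref{cor:eigenvalue_perturbation_for_single_eigenvalue}) and the explicit formula for the diagonal entries of $F_1$ (Lemma~\ref{lemma:diagonal_entries_of_F1}). Since by Theorem~\ref{Inductive_Identity_for_Lyapunov-Floquet_decomposition} the Floquet exponent matrix admits the analytic expansion $F_\epsilon = F_0 + \epsilon F_1 + \epsilon^2 F_2 + O(\epsilon^3)$ with $F_0$ diagonal, the matrix $F_\epsilon$ fits exactly the hypothesis of Lemma~\ref{Eigenvalue_pertubation}: the unperturbed matrix is diagonal, and its eigenvalues depend analytically on $\epsilon$ near the simple eigenvalue $(F_0)_{ii}$.

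First I would apply Corollary~\ref{cor:eigenvalue_perturbation_for_single_eigenvalue} with $M_\epsilon = F_\epsilon$ at the simple eigenvalue $(F_0)_{ii}$. This directly gives the asymptotic expansion of the corresponding Floquet exponent
\begin{align*}
    f_i(\epsilon) = (F_0)_{ii} + \epsilon (F_1)_{ii} + \epsilon^2 \left( \sum_{\substack{j=1\\ j\neq i}}^{N} \frac{(F_1)_{ij}(F_1)_{ji}}{(F_0)_{ii}-(F_0)_{jj}} + (F_2)_{ii} \right) + O(\epsilon^3),
\end{align*}
so that the only obstruction to the desired conclusion is the vanishing of the linear term $\epsilon(F_1)_{ii}$.

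Next I would invoke Lemma~\ref{lemma:diagonal_entries_of_F1}, which asserts that $(F_1)_{kk}$ equals the $(k,k)$-entry of the time average of $A_1$, i.e. the $(k,k)$-entry of its constant Fourier component $A_1^{(0)}$. Under the hypothesis $A_1^{(0)} = 0$, we therefore get $(F_1)_{ii} = 0$, and substituting this into the expansion yields $f_i(\epsilon) = (F_0)_{ii} + O(\epsilon^2)$, which is exactly the claim.

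There is essentially no obstacle here, since the two results being combined have already been proven in Sections~\ref{sec:exact_formula_F1} and the general eigenvalue perturbation lemma is standard. The only small care-point worth highlighting in the write-up is the justification that a simple eigenvalue of $F_\epsilon$ indeed depends analytically on $\epsilon$ so that Lemma~\ref{Eigenvalue_pertubation} applies; this follows from the analyticity of $F_\epsilon$ granted by Theorem~\ref{Inductive_Identity_for_Lyapunov-Floquet_decomposition} together with the classical Kato-Rellich analytic perturbation theory for simple eigenvalues.
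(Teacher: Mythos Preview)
Your proposal is correct and follows essentially the same approach as the paper: apply Corollary~\ref{cor:eigenvalue_perturbation_for_single_eigenvalue} to $F_\epsilon$ at the simple eigenvalue $(F_0)_{ii}$, then use Lemma~\ref{lemma:diagonal_entries_of_F1} together with the hypothesis $A_1^{(0)}=0$ to conclude $(F_1)_{ii}=0$. Your additional remark on analyticity via Kato--Rellich is a welcome clarification but not strictly needed beyond what the paper already assumes.
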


\begin{proof}
    Corollary \ref{cor:eigenvalue_perturbation_for_single_eigenvalue} implies that $f_i(\epsilon)$ is given by
        $$f_i(\epsilon) = (F_0)_{ii} + \epsilon(F_1)_{ii} + O(\epsilon^2).$$
    Now, Lemma \ref{lemma:diagonal_entries_of_F1} provides a formula for the diagonal entries of $F_1$, and thus $(F_1)_{ii}$ is given by
        $$(F_1)_{ii} = \left(\frac{T}{2\pi}\int_{0}^{\frac{2\pi}{T}} A_1(t)\,dt\right)_{ii},$$
    which is precisely
        $$(F_1)_{ii} = (A^{(0)}_1)_{ii}.$$
    By assumption, $A_1$ has no constant frequency component and thus
        $$f_i(\epsilon) = (F_0)_{ii} + 0 + O(\epsilon^2)$$
    and the desired result follows.
\end{proof}

In Figure \ref{fig:ClaHarm_const_nonconst_perturbation}, this phenomenon can be observed in the case of the classical harmonic oscillator. It is clearly visible that in the non-constant setting, the Floquet exponents are perturbed quadratically, whereas in the setting where $A_1$ has also a constant component, a linear perturbation can be observed.

\begin{figure}
    \centering
    \includegraphics[width=0.5\textwidth]{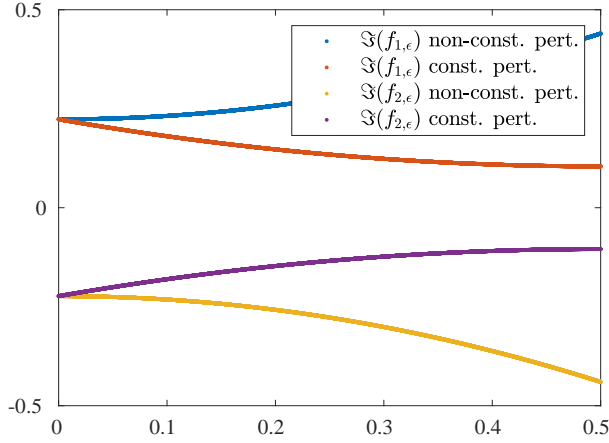}
    \caption{Imaginary part for the perturbation in the Floquet exponent when $A_1$ has a constant component and  when $A_1$ has no constant component in the classical harmonic oscillator case. The non-constant case has been modulated with frequencies $a=b=1$ and the constant case has been modulated with frequencies $a = 1$ and $b = 0$. All other parameters have been chosen as follows: $\Omega = 0.75$, $c=k=0.3$ with no phase shift, i.e. $\phi =0$.}
    \label{fig:ClaHarm_const_nonconst_perturbation}
\end{figure}

Actually, $F_\epsilon$ is given in the constant and in the non-constant setting by the following
\begin{align*}
    F_\epsilon &= \begin{pmatrix}
        -0.15 + 0.22\i	\\
        &-0.15 - 0.22\i
    \end{pmatrix} 
    + \epsilon\begin{pmatrix}
        - 0.47\i	&0.32 - 0.29\i\\
        0.32 + 0.29\i	&0.47\i
    \end{pmatrix} + O(\epsilon^2),\\
    F_\epsilon &= \begin{pmatrix}
        -0.15 + 0.22\i	\\
        &-0.15 - 0.22\i
    \end{pmatrix} 
    + \epsilon\begin{pmatrix}
        0	&- 0.81\i\\
        0.81\i	&0
    \end{pmatrix} + O(\epsilon^2),
\end{align*}
respectively. Indeed, in the setting where $A_1$ has no constant component, one observes that the diagonal entries of $F_1$ are equal to zero.



\subsection{Degenerate case of asymptotic analysis of Floquet exponent}

In this section, the case of a \emph{double} eigenvalue $(F_0)_{ii} = (F_0)_{jj}$ for some $ i \not= j$ and its perturbations will be treated. To this end, the following corollary of Lemma \ref{Eigenvalue_pertubation} will be useful.

\begin{cor}\label{cor:eigenvalue_perturbation_of_double_eigenvalue}
    Let $M_\epsilon$ be as in Lemma \ref{Eigenvalue_pertubation} and suppose that $(M_0)_{ii}$ is a double eigenvalue of $M_0$ with $(M_0)_{jj} = (M_0)_{ii}$ for some $j$. Then the eigenvalues of $M_\epsilon$ associated to $(M_0)_{ii}$ are given by the eigenvalues of the effective Hamiltonian associated to $(M_0)_{ii}$, which is given by  
        \begin{align*}
            \begin{pmatrix}
                    (M_0)_{ii} &0\\
                    0 &(M_0)_{jj}
            \end{pmatrix} + 
            \epsilon\begin{pmatrix}
                    (M_1)_{ii} &(M_1)_{ij}\\
                    (M_1)_{ji} &(M_1)_{jj}
            \end{pmatrix} +
            \epsilon^2\begin{pmatrix}
                    (M_1GM_1 + M_2)_{ii} &(M_1GM_1 + M_2)_{ij}\\
                    (M_1GM_1 + M_2)_{ji} &(M_1GM_1 + M_2)_{jj}
            \end{pmatrix}
            + O(\epsilon^3),
        \end{align*}
        with $(M_1GM_1 + M_2)_{kl}$ for $k,l \in \{i,j\}$ being equal to
        \begin{align*}
                (M_1GM_1 + M_2)_{kl} = \sum_{n = 1, n\not=i,j}^{N}\frac{(M_1)_{kn}(M_1)_{nl}}{(M_0)_{ii}-(M_0)_{nn}} + (M_2)_{kl}.
        \end{align*}
\end{cor}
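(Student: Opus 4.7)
The plan is to deduce the corollary by directly specializing Lemma \ref{Eigenvalue_pertubation} to the case where $(M_0)_{ii}$ is a double eigenvalue. Under the standing hypothesis $(M_0)_{jj} = (M_0)_{ii}$ (and no further diagonal entry of $M_0$ equal to this value, which is implicit in the statement since only the $\{i,j\}$ block appears in the claimed effective Hamiltonian), the eigenspace $\Span(e_n)_{(M_0)_{nn} = (M_0)_{ii}}$ appearing in Lemma \ref{Eigenvalue_pertubation} is precisely $\Span(e_i, e_j)$. That lemma then yields that the eigenvalues of $M_\epsilon$ associated with $(M_0)_{ii}$ are of the form $(M_0)_{ii} + \epsilon\lambda + O(\epsilon^3)$, where $\lambda$ ranges over the eigenvalues of the restriction of $M_1 + \epsilon(M_1 G_i M_1 + M_2)$ to $\Span(e_i, e_j)$. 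Writing this restriction as the $2\times 2$ block indexed by $\{i,j\}$, multiplying the block by $\epsilon$, and adding back $(M_0)_{ii}\,I_2$, one reads off directly the effective Hamiltonian in the form claimed.

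It remains to unpack the entries of $M_1 G_i M_1$ in the block. By the formula for $G_i$ in Lemma \ref{Eigenvalue_pertubation}, the diagonal entry $(G_i)_{nn}$ vanishes whenever $(M_0)_{nn} = (M_0)_{ii}$, i.e., for $n \in \{i,j\}$, and equals $1/((M_0)_{ii} - (M_0)_{nn})$ otherwise. Consequently, for $k,l \in \{i,j\}$,
$$
(M_1 G_i M_1)_{kl} = \sum_{n=1}^{N} (M_1)_{kn}(G_i)_{nn}(M_1)_{nl} = \sum_{\substack{n=1 \\ n \neq i,j}}^{N} \frac{(M_1)_{kn}(M_1)_{nl}}{(M_0)_{ii} - (M_0)_{nn}},
$$
and adding $(M_2)_{kl}$ produces exactly the expression for $(M_1 G_i M_1 + M_2)_{kl}$ stated in the corollary.

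I do not anticipate any substantive obstacle: the corollary is essentially a bookkeeping specialization of Lemma \ref{Eigenvalue_pertubation} to a two-dimensional invariant subspace. The only subtlety worth flagging is the implicit assumption that $(M_0)_{ii}$ has multiplicity exactly two (not higher), which is precisely what allows the effective Hamiltonian to be a $2\times 2$ block; if higher multiplicities were present, the same argument would apply with a correspondingly larger block.
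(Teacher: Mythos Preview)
Your proposal is correct and follows exactly the approach the paper intends: the corollary is stated without a separate proof in the paper, being treated as an immediate specialization of Lemma \ref{Eigenvalue_pertubation} to the two-dimensional eigenspace $\Span(e_i,e_j)$. Your unpacking of the $(M_1 G_i M_1)_{kl}$ entries via the diagonal structure of $G_i$ is the only computation needed, and you carry it out correctly.
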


Corollary \ref{cor:eigenvalue_perturbation_of_double_eigenvalue} allows to analyze the perturbation of a double constant order Floquet exponent and to also treat the case of exceptional points. In the following subsection, different possible asymptotics of a double constant order Floquet exponents are explored. Although simple constant order Floquet exponent are \emph{always} perturbed quadratically (when one assumes no constant first order perturbation of the corresponding ODE), it becomes apparent that this ``generic'' analytic behavior of Floquet exponents is no longer present in the degenerate case of a double constant order Floquet exponent. Namely, in the case of a double constant order Floquet exponent, it is possible to achieve linear perturbation of the constant order Floquet exponent by perturbing the corresponding ODE with the associated \emph{resonant frequencies}. In Subsection \ref{sec:exceptional_points} the possibility of occurrence of asymptotic exceptional points is addressed.

\subsubsection{Eigenvalue perturbation of double Floquet exponent}

Corollary \ref{cor:eigenvalue_perturbation_of_double_eigenvalue} leads to the following first order perturbation of a double constant order Floquet exponent.
    
\begin{lemma}[Linear perturbation of double Floquet exponent]\label{lemma:linear_perturbation_of_double_eigenvalue}
    With the assumptions of Theorem \ref{Inductive_Identity_for_Lyapunov-Floquet_decomposition} and denoting by $F_\epsilon = F_0 + \epsilon F_1 + \epsilon^2 F_2 + O(\epsilon^3)$ the associated Floquet exponent matrix of the equation ${dx}/{dt} = A_\epsilon(t)x$, the following holds. If $(F_0)_{ii} = (F_0)_{jj}$ with $i \not= j$ is a \emph{double} eigenvalue of $F_0$, then the first order effective Hamiltonian associated to $(F_0)_{ii}$ is given by
        \begin{align*}
            \begin{pmatrix}
                    (F_0)_{ii} \\
                    & (F_0)_{jj}
            \end{pmatrix}
            + \epsilon\begin{pmatrix}
                    (A_1^{(0)})_{ii} & (A_1^{(n_i - n_j)})_{ij} \\
                    (A_1^{(n_j - n_i)})_{ji} & (A_1^{(0)})_{jj}
            \end{pmatrix},
        \end{align*}
    where $n_i, n_j$ denote the folding numbers of $(F_0)_{ii}, (F_0)_{jj}$, respectively. Furthermore, up to first order in $\epsilon$, the Floquet exponents associated to $(F_0)_{ii}$ are given by
        \begin{align*}
            (F_0)_{ii} + \epsilon\left(\frac{(A_1^{(0)})_{ii} + (A_1^{(0)})_{jj}}{2} \pm \sqrt{\left(\frac{(A_1^{(0)})_{ii} - (A_1^{(0)})_{jj}}{2}\right)^2 + (A_1^{(n_i - n_j)})_{ij}(A_1^{(n_j - n_i)})_{ji}}\right).
        \end{align*}
\end{lemma}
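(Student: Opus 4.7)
The plan is to prove this lemma as a direct consequence of Corollary \ref{cor:eigenvalue_perturbation_of_double_eigenvalue} applied to $M_\epsilon = F_\epsilon$, combined with the explicit formulas for $F_1$ established in Theorem \ref{First_order_Floquet_matrix_and_Lyapunov_transformation} and Lemma \ref{lemma:diagonal_entries_of_F1}. Since we only need the first-order effective Hamiltonian, the contribution $M_1 G M_1 + M_2$ appearing in Corollary \ref{cor:eigenvalue_perturbation_of_double_eigenvalue} is absorbed into the $O(\epsilon^2)$ remainder, so no second-order computation is required.

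First I would apply Corollary \ref{cor:eigenvalue_perturbation_of_double_eigenvalue} with $M_\epsilon = F_\epsilon$ to conclude that the Floquet exponents associated to the double eigenvalue $(F_0)_{ii} = (F_0)_{jj}$ are, up to order $\epsilon$, the eigenvalues of the $2 \times 2$ block
\begin{align*}
\begin{pmatrix} (F_0)_{ii} & 0 \\ 0 & (F_0)_{jj} \end{pmatrix} + \epsilon \begin{pmatrix} (F_1)_{ii} & (F_1)_{ij} \\ (F_1)_{ji} & (F_1)_{jj} \end{pmatrix}.
\end{align*}

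Next I would fill in the entries of this block by invoking the explicit formulas already derived. For the diagonal entries, Lemma \ref{lemma:diagonal_entries_of_F1} yields $(F_1)_{ii} = (A_1^{(0)})_{ii}$ and $(F_1)_{jj} = (A_1^{(0)})_{jj}$. For the off-diagonal entries, since $(F_0)_{ii} = (F_0)_{jj}$, the first case of the formula in Theorem \ref{First_order_Floquet_matrix_and_Lyapunov_transformation} applies and gives $(F_1)_{ij} = (A_1^{n_i - n_j})_{ij}$ and $(F_1)_{ji} = (A_1^{n_j - n_i})_{ji}$, with $n_i, n_j$ the folding numbers. This establishes the stated form of the effective Hamiltonian.

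Finally I would compute the eigenvalues of the effective Hamiltonian using the standard quadratic formula for a $2 \times 2$ matrix $\bigl(\begin{smallmatrix} a & b \\ c & d \end{smallmatrix}\bigr)$, namely $\tfrac{a+d}{2} \pm \sqrt{\bigl(\tfrac{a-d}{2}\bigr)^2 + bc}$. Substituting the entries obtained above, factoring $\epsilon$ out of the perturbative part, and tracking the unperturbed value $(F_0)_{ii}$ gives exactly the claimed asymptotic
\begin{align*}
(F_0)_{ii} + \epsilon\left( \frac{(A_1^{(0)})_{ii} + (A_1^{(0)})_{jj}}{2} \pm \sqrt{\left(\frac{(A_1^{(0)})_{ii} - (A_1^{(0)})_{jj}}{2}\right)^2 + (A_1^{(n_i - n_j)})_{ij}(A_1^{(n_j - n_i)})_{ji}} \right).
\end{align*}

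There is no genuine obstacle: the proof is essentially assembling three already-proved ingredients (the eigenvalue-perturbation corollary, the diagonal formula for $F_1$, and the resonant off-diagonal formula for $F_1$) and applying the quadratic formula. The only point that requires a moment of care is verifying that $F_\epsilon$ satisfies the diagonalizability-plus-analyticity hypothesis of Lemma \ref{Eigenvalue_pertubation}, which is guaranteed by the analytic Floquet-Lyapunov decomposition established in Theorem \ref{Inductive_Identity_for_Lyapunov-Floquet_decomposition} together with the fact that $F_0$ is diagonal by construction.
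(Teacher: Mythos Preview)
Your proposal is correct and follows essentially the same route as the paper: invoke the $2\times 2$ eigenvalue-perturbation result (the paper cites Lemma \ref{Eigenvalue_pertubation} directly, you its Corollary \ref{cor:eigenvalue_perturbation_of_double_eigenvalue}), plug in the explicit entries of $F_1$ from the already-established formulas (the paper cites Lemma \ref{FloquetExponentMatrixForDoubleFloquetExponent}, you equivalently use Theorem \ref{First_order_Floquet_matrix_and_Lyapunov_transformation} and Lemma \ref{lemma:diagonal_entries_of_F1}), and then solve the $2\times 2$ characteristic equation. The only difference is cosmetic, and your remark about checking the hypotheses of Lemma \ref{Eigenvalue_pertubation} is a nice touch the paper's proof leaves implicit.
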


\begin{proof}
The identity of the first order effective Hamiltonian is a direct consequence of Lemmas \ref{Eigenvalue_pertubation}  and  \ref{FloquetExponentMatrixForDoubleFloquetExponent}.
The equation for the first order approximation of the Floquet exponents associated to $(F_0)_{ii}$ is due to the fact that the characteristic polynomial of 
    \begin{align*}
        \begin{pmatrix}
                    (A_1^{(0)})_{ii} & (A_1^{(n_i - n_j)})_{ij} \\
                    (A_1^{(n_j - n_i)})_{ji} & (A_1^{(0)})_{jj}
            \end{pmatrix}
    \end{align*}
    is given by 
    \begin{multline*}
        \left( (A_1^{(0)})_{ii} - \lambda \right)\left((A_1^{(0)})_{jj} - \lambda\right) - (A_1^{(n_i - n_j)})_{ij}(A_1^{(n_j - n_i)})_{ji}\\
        = \lambda^2 - \left( (A_1^{(0)})_{ii} + (A_1^{(0)})_{jj}\right) \lambda + (A_1^{(0)})_{ii}(A_1^{(0)})_{jj}- (A_1^{(n_i - n_j)})_{ij}(A_1^{(n_j - n_i)})_{ji}.
    \end{multline*}
    Hence, its eigenvalues are
    \begin{align*}
        \frac{(A_1^{(0)})_{ii} + (A_1^{(0)})_{jj}}{2} \pm \sqrt{\left(\frac{(A_1^{(0)})_{ii} + (A_1^{(0)})_{jj}}{2}\right)^2 - (A_1^{(0)})_{ii}(A_1^{(0)})_{jj} + (A_1^{(n_i - n_j)})_{ij}(A_1^{(n_j - n_i)})_{ji}}\\
        = \frac{(A_1^{(0)})_{ii} + (A_1^{(0)})_{jj}}{2} \pm \sqrt{\left(\frac{(A_1^{(0)})_{ii} - (A_1^{(0)})_{jj}}{2}\right)^2 + (A_1^{(n_i - n_j)})_{ij}(A_1^{(n_j - n_i)})_{ji}},
    \end{align*}
    and the desired result follows.
\end{proof}

In the special case where the modulation has no constant component, that is, in the situation where 
\begin{align*}
    A_1^{(0)} = 0,
\end{align*}
the formulation of Lemma \ref{lemma:linear_perturbation_of_double_eigenvalue} simplifies and one recovers the following result, which has already been mentioned in Section \ref{F1_when_F0_has_double_eigenvalue}.

\begin{lemma}[First order Floquet exponent without constant modulation]
    With the assumptions of Theorem \ref{Inductive_Identity_for_Lyapunov-Floquet_decomposition}, given a double constant order Floquet exponent $(F_0)_{ii} = (F_0)_{jj}$ for some  $i \not= j$, and supposing that the constant component $A_1^{(0)}$ of the first order modulation $A_1$ is zero, it holds that the Floquet exponents corresponding to $(F_0)_{ii}$ are given by
        \begin{align*}
            (F_0)_{ii} \pm \epsilon\sqrt{A_1^{(n_i - n_j)})_{ij}(A_1^{(n_j - n_i)})_{ji}} + O(\epsilon^2),
        \end{align*}
    where $\pm(n_i - n_j)$ are the resonant frequencies of the double constant order Floquet exponent $(F_0)_{ii}$.
\end{lemma}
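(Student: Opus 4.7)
The plan is to derive this result as a direct specialization of Lemma \ref{lemma:linear_perturbation_of_double_eigenvalue}, which has already handled the general double-eigenvalue situation. Since that lemma is stated for arbitrary (possibly nonzero) constant Fourier component $A_1^{(0)}$, the present assertion should drop out just by setting the diagonal entries $(A_1^{(0)})_{ii}$ and $(A_1^{(0)})_{jj}$ to zero. So the main work is bookkeeping rather than a new calculation, and I would organize the argument as follows.

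First I would invoke Lemma \ref{lemma:linear_perturbation_of_double_eigenvalue} to write down the first-order effective Hamiltonian associated to the double constant-order Floquet exponent $(F_0)_{ii} = (F_0)_{jj}$. Under the hypothesis $A_1^{(0)} = 0$, this $2\times 2$ matrix reduces to the purely off-diagonal perturbation
\begin{align*}
\begin{pmatrix} (F_0)_{ii} & 0 \\ 0 & (F_0)_{jj} \end{pmatrix}
+ \epsilon \begin{pmatrix} 0 & (A_1^{(n_i-n_j)})_{ij} \\ (A_1^{(n_j-n_i)})_{ji} & 0 \end{pmatrix}.
\end{align*}
Then I would substitute $(A_1^{(0)})_{ii} = (A_1^{(0)})_{jj} = 0$ directly into the closed-form expression for the two perturbed eigenvalues given at the end of Lemma \ref{lemma:linear_perturbation_of_double_eigenvalue}. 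The average term $\tfrac{1}{2}((A_1^{(0)})_{ii} + (A_1^{(0)})_{jj})$ vanishes and the squared difference under the radical vanishes as well, leaving only the product $(A_1^{(n_i-n_j)})_{ij}(A_1^{(n_j-n_i)})_{ji}$ inside the square root. This yields exactly the claimed form $(F_0)_{ii} \pm \epsilon\sqrt{(A_1^{(n_i-n_j)})_{ij}(A_1^{(n_j-n_i)})_{ji}} + O(\epsilon^2)$.

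Finally, to close the statement I would note that by Definition \ref{def_resonant_freq} the integers $\pm(n_i-n_j)$ are precisely the resonant frequencies attached to the multiple constant order Floquet exponent $(F_0)_{ii}$, so the Fourier modes appearing in the square root are exactly those components of $A_1$ indexed by the resonant frequencies. There is no genuine obstacle here: the only point worth double-checking is that, when $(F_0)_{ii}$ has multiplicity exactly two, Lemma \ref{Eigenvalue_pertubation}/Corollary \ref{cor:eigenvalue_perturbation_of_double_eigenvalue} genuinely reduces the eigenvalue problem to the $2\times 2$ block spanned by $e_i,e_j$, so that no contributions from other indices $l \neq i,j$ with $(F_0)_{ll} = (F_0)_{ii}$ need to be included. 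This is automatic under the stated hypothesis of a double (rather than higher multiplicity) Floquet exponent, so the proof concludes.
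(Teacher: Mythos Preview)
Your proposal is correct and matches the paper's own treatment: the paper presents this lemma as an immediate specialization of Lemma \ref{lemma:linear_perturbation_of_double_eigenvalue} obtained by setting $A_1^{(0)}=0$, exactly as you argue. Your additional remarks about Definition \ref{def_resonant_freq} and the multiplicity-two assumption are fine elaborations but not strictly needed.
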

Provided that the first order modulation has no constant component, this lemma particularly implies the following. A multiple constant order Floquet exponent will only be perturbed linearly when the first order modulation $A_1$ has non-zero frequency components corresponding to the resonant frequencies of the double constant order Floquet exponent. Particularly, if the resonant frequency components are zero, then no linear perturbation occurs and the next lemma applies.

\begin{lemma}[Quadratic perturbation of double Floquet exponent]
    With the assumptions of Theorem \ref{Inductive_Identity_for_Lyapunov-Floquet_decomposition} and given a double constant order Floquet exponent $(F_0)_{ii} = (F_0)_{jj}$ for some  $i \not= j$ and supposing that the constant component $A_1^{(0)}$ and the resonant frequency components $A_1^{(n_i - n_j)}, \, A_1^{(n_j - n_i)}$ of the first order modulation $A_1$ are zero, it holds that the Floquet exponents corresponding to $(F_0)_{ii}$ are not perturbed linearly, but their quadratic perturbations are given by the eigenvalues of\footnote{Here, it would be possible to express $F_1$ in terms of $A_0, \, A_1, \ldots$ and then obtain explicit expansions of the perturbation of $(F_0)_{ii}$ up to quadratic order. Nevertheless, such expressions are rather cumbersome and are therefore omitted here.}
    \begin{align*}
        \begin{pmatrix}
                    (F_1G_iF_1 + F_2)_{ii} &(F_1G_iF_1 + F_2)_{ij}\\
                    (F_1G_iF_1 + F_2)_{ji} &(F_1G_iF_1 + F_2)_{jj}
            \end{pmatrix},
    \end{align*}
        with $G_i$ being a diagonal matrix given by
        \begin{align*}
            (G_i)_{ll} = \begin{cases} 0 &\text{ if } (F_0)_{ll} = (F_0)_{ii}, \\
            \frac{1}{(F_0)_{ii} - (F_0)_{ll}} &\text{ if } (F_0)_{ll} \not= (F_0)_{ii}.
            \end{cases}
        \end{align*}
\end{lemma}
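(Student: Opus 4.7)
The plan is to apply Corollary \ref{cor:eigenvalue_perturbation_of_double_eigenvalue} with $M_\epsilon = F_\epsilon$, $M_0 = F_0$, $M_1 = F_1$, $M_2 = F_2$, and to exploit the fact that the entire $2\times 2$ block of $F_1$ corresponding to the degenerate indices $i,j$ vanishes under our hypotheses.

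First I would invoke Lemma \ref{FloquetExponentMatrixForDoubleFloquetExponent} (or equivalently the first-order effective Hamiltonian computed in Lemma \ref{lemma:linear_perturbation_of_double_eigenvalue}) to identify
\begin{align*}
    \begin{pmatrix} (F_1)_{ii} & (F_1)_{ij} \\ (F_1)_{ji} & (F_1)_{jj} \end{pmatrix}
    = \begin{pmatrix} (A_1^{(0)})_{ii} & (A_1^{(n_i-n_j)})_{ij} \\ (A_1^{(n_j-n_i)})_{ji} & (A_1^{(0)})_{jj} \end{pmatrix}.
\end{align*}
Under the assumption that $A_1^{(0)} = 0$ and that the resonant frequency components $A_1^{(n_i-n_j)}$, $A_1^{(n_j-n_i)}$ vanish, this $2\times 2$ block of $F_1$ is identically zero.

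Second, I would substitute this vanishing into the effective Hamiltonian of Corollary \ref{cor:eigenvalue_perturbation_of_double_eigenvalue}. The first-order term $\epsilon F_1|_{\{i,j\}}$ then drops out entirely, so the effective Hamiltonian governing the pair of Floquet exponents associated with $(F_0)_{ii}$ reduces to
\begin{align*}
    (F_0)_{ii}\, \Id_{2\times 2} + \epsilon^2 \begin{pmatrix} (F_1G_iF_1 + F_2)_{ii} & (F_1G_iF_1 + F_2)_{ij} \\ (F_1G_iF_1 + F_2)_{ji} & (F_1G_iF_1 + F_2)_{jj} \end{pmatrix} + O(\epsilon^3),
\end{align*}
with $G_i$ as in the statement (which is exactly the matrix $G$ of Corollary \ref{cor:eigenvalue_perturbation_of_double_eigenvalue} for our index $i$, since $(G_i)_{ll}$ vanishes precisely on the degenerate block and equals $1/((F_0)_{ii}-(F_0)_{ll})$ elsewhere). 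The corollary then immediately gives that both Floquet exponents attached to $(F_0)_{ii}$ have no linear term in $\epsilon$ and their second-order coefficients are the eigenvalues of the stated $2\times 2$ matrix.

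The argument is essentially a direct specialization of the already-proved corollary, so there is no substantial obstacle. The only point that warrants care is checking that Lemma \ref{Eigenvalue_pertubation} is applicable here: analytic dependence of $F_\epsilon$ on $\epsilon$ is granted by Theorem \ref{Inductive_Identity_for_Lyapunov-Floquet_decomposition}, while the apparent obstruction that $F_0$ has a repeated eigenvalue is handled by the standard Rayleigh--Schr\"odinger reduction to the degenerate subspace, which is precisely what Corollary \ref{cor:eigenvalue_perturbation_of_double_eigenvalue} formalizes. Once the first-order block of $F_1|_{\{i,j\}}$ is zero, the perturbation inside the two-dimensional eigenspace becomes effectively of order $\epsilon^2$, and the eigenvalue expansion is well-defined to that order regardless of further degeneracy in the second-order block.
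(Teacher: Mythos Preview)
Your proof is correct and follows exactly the paper's approach: the paper's proof reads in full ``This is a direct consequence of Corollary \ref{cor:eigenvalue_perturbation_of_double_eigenvalue}.'' Your version simply spells out the implicit step of using Lemma \ref{FloquetExponentMatrixForDoubleFloquetExponent} to see that the $\{i,j\}$-block of $F_1$ vanishes under the hypotheses, which is precisely what makes the linear term drop out of the effective Hamiltonian.
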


\begin{proof}
    This is a direct consequence of Corollary \ref{cor:eigenvalue_perturbation_of_double_eigenvalue}.
\end{proof}

\subsubsection{Exceptional points}\label{sec:exceptional_points}
    In this subsection, conditions on the asymptotic existence of exceptional points will be elaborated. First, the definition of exceptional points will be recalled. Then, their occurrence will be analyzed in the setting of periodically modulated ODEs with constant coefficients. To illustrate the results, the existence of asymptotic exceptional points will be addressed in the setting of the modulated classical harmonic oscillator.

    \begin{definition}[Exceptional point]\label{def:EP}
        Let $A(c,t)$ be a parametrized $T$-periodic matrix-valued function $A(c,\cdot): \mathbb{R} \rightarrow \mathbb{C}^{N\times N}$. An element $c_0$ of the parameter space is called an \emph{exceptional point of the parametrized differential equation}
        $$\frac{dx}{dt} = A(c,t)x$$
        if the fundamental solution $X$ of  ${dx}/{dt} = A(c_0,t)x$ has the property that $X(T)$ is \emph{not} diagonalizable.
    \end{definition}

    There are different interpretations of exceptional points of a parametrized differential equation. One interpretation is that an exceptional point is a point $c_0$ of the parameter space where the eigenvectors of the fundamental solution $X(T)$ ``become parallel''. What is meant by this is the following. Denote by $X_c(t)$ the fundamental solution of
        $$\frac{dx}{dt} = A(c,t)x$$
    and suppose that it is possible to parametrize a basis of eigenvectors $T_c$ of $X_c(t)$ continuously. Then, when approaching an exceptional point $c_0$ one observes that $\lim_{c\rightarrow c_0} T_c$ is singular; in other words, some eigenvectors become linearly dependent in this limit.

    Another interpretation of exceptional points goes as follows. Given an exceptional point and its associated fundamental solution $X(T)$, then Theorem \ref{Floquet-Bloch_theorem} implies that
        $$X(T) = \exp(FT)$$
    is not diagonalizable. Thus $F$ is not diagonalizable, but can be written in its Jordan decomposition
        $$F = QJQ^{-1}.$$
    Taking the exponential function of a Jordan block, for example of a $2\times2$-block,
        $$\exp\begin{pmatrix}
            \sigma &1 \\
            0 & \sigma
        \end{pmatrix} = \begin{pmatrix}
            \exp(\sigma) & \sigma \\
            0 &\exp(\sigma)
        \end{pmatrix},$$
    monomial terms on the upper side of the diagonal appear. That is, exceptional points are special choices of the parameter where the solutions of the differential equation at that choice of parameter exhibit polynomial behavior. At a non-exceptional point, the solutions to the differential equation will only exhibit periodic and exponential behavior. 

    \begin{lemma}[Condition for existence of exceptional points]\label{lemma:condition_for_exceptional_point}
        Let $A(c,t)$ be a parametrized $T$-periodic matrix-valued function $A(c,\cdot): \mathbb{R} \rightarrow \mathbb{C}^{N\times N}$, then $c_0$ is an exceptional point if and only if a Floquet exponent matrix $F$ associated to the differential equation
            $$\frac{dx}{dt} = A(c_0,t)x$$
        is not diagonalizable.
    \end{lemma}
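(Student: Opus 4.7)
The plan is to reduce the statement to the elementary fact that $\exp(FT)$ is diagonalizable if and only if $F$ is. The bridge between $F$ and $X(T)$ comes from Floquet's theorem (Theorem \ref{Floquet-Bloch_theorem}): writing $X(t)=P(t)\exp(Ft)$ with $P$ being $T$-periodic and $P(0)=\Id$, we get
\[
X(T) \;=\; P(T)\exp(FT) \;=\; P(0)\exp(FT) \;=\; \exp(FT).
\]
So the definition of an exceptional point (Definition \ref{def:EP}) becomes the assertion that $\exp(FT)$ is not diagonalizable, and it suffices to prove that $\exp(FT)$ is diagonalizable if and only if $F$ is.

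The easy direction is $F$ diagonalizable $\Rightarrow$ $\exp(FT)$ diagonalizable: if $F=QDQ^{-1}$ with $D$ diagonal, then $\exp(FT)=Q\exp(DT)Q^{-1}$ is diagonal up to conjugation. For the converse, I would argue by contraposition using the Jordan decomposition. Write $F=Q(D+N)Q^{-1}$ where $D$ is diagonal, $N$ is nilpotent, and $DN=ND$, so that $\exp(FT)=Q\exp(DT)\exp(NT)Q^{-1}$. Restricting to a single Jordan block of size $k\ge 2$ at eigenvalue $\lambda$, the corresponding block of $\exp(FT)$ is
\[
e^{\lambda T}\!\left(I+TN_k+\tfrac{T^2}{2}N_k^2+\cdots+\tfrac{T^{k-1}}{(k-1)!}N_k^{k-1}\right),
\]
whose only eigenvalue is $e^{\lambda T}$ and whose deviation from $e^{\lambda T}I$ equals $e^{\lambda T}TN_k(I+\tfrac{T}{2}N_k+\cdots)$. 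The parenthesised factor is unipotent hence invertible, so this deviation has rank $k-1$. Consequently the geometric multiplicity of $e^{\lambda T}$ in this block is $1<k$, showing that $\exp(FT)$ is not diagonalizable.

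A short final remark is needed to handle the indefinite article ``\emph{a} Floquet exponent matrix''. As discussed after Theorem \ref{Floquet-Bloch_theorem}, any two such matrices $F$, $\tilde F=F-R$ differ by a matrix $R$ that commutes with $F$ and satisfies $\exp(RT)=\Id$, so $R$ is itself diagonalizable with eigenvalues in $\tfrac{2\pi\i}{T}\mathbb{Z}$. Since $R$ commutes with $F$, its eigenspaces are $F$-invariant, and the restrictions of $F$ and $\tilde F$ to each such eigenspace differ only by a scalar multiple of the identity; hence $F$ is diagonalizable if and only if $\tilde F$ is. Therefore the choice of Floquet exponent matrix does not affect the conclusion, and combining this with the equivalence proved above yields the lemma.

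I do not expect any significant obstacle: the Floquet identity $X(T)=\exp(FT)$ is the key observation, and the equivalence of diagonalizability between $F$ and $\exp(FT)$ is standard linear algebra via Jordan form. The only mildly delicate point is the choice-independence at the end, which is handled by the commutation argument above.
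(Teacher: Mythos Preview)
Your proposal is correct and follows essentially the same route as the paper: both reduce the statement to $X(T)=\exp(FT)$ via Floquet's theorem and then argue that $\exp(FT)$ is diagonalizable if and only if $F$ is, with the nontrivial direction handled by looking at the exponential of a Jordan block of size $\geq 2$. Your write-up is in fact more thorough than the paper's: you give an explicit rank/geometric-multiplicity argument for the Jordan block (the paper simply asserts that $\exp(J)$ is not diagonalizable), and you address the non-uniqueness of the Floquet exponent matrix, which the paper's proof leaves implicit.
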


    \begin{proof}
        Theorem \ref{Floquet-Bloch_theorem} insures that the fundamental solution to the differential equation 
        $$\frac{dx}{dt} = A(c_0,t)x$$
        can be decomposed as
        $$X(t) = P(t)\exp(Ft),$$
        where $P(t)$ is $T$-periodic and $F$ is a constant matrix. By definition, $c_0$ is an exceptional point if and only if $X(T)$ is not diagonalizable, which is precisely the case when $X(T) = P(T)\exp(FT) = \exp(FT)$ is not diagonalizable. This implies however that $F$ is not diagonalizable either. For the other implication, it suffices to remark that $\exp(J)$, where $J$ is a Jordan block of multiplicity greater than $1$, is also not diagonalizable. 
    \end{proof}

    In the setting of Section \ref{Chapter:Asymptotic_Floquet_theory}, it is rather difficult to obtain an equivalence similar to the above lemma. Applying directly Lemma \ref{lemma:condition_for_exceptional_point}, it follows that the system ${dx}/{dt} = A_\epsilon(t)x$ is posed at an exceptional point if and only if $F_\epsilon$ is not diagonalizable. However, $F_\epsilon$ is only defined via the limit $\sum_{n = 0}^\infty \epsilon^nF_n$ and thus this criterion turns out to be impractical. For the sake of employability, we introduce the following definition.

    \begin{definition}[$n$th order asymptotic exceptional point] \label{def:aEP}
        In the setting of Theorem \ref{Inductive_Identity_for_Lyapunov-Floquet_decomposition}, we say that the system ${dx}/{dt} = A_\epsilon(t)x$ is posed at an \emph{$n$th order asymptotic exceptional point} if 
            $$F_0 + \epsilon F_1 + \ldots + \epsilon^n F_n$$
        is not diagonalizable for all $\epsilon >0$.
    \end{definition}

	\begin{remark}
		In contrast to \Cref{def:EP}, the coefficient matrix is not parametrized by $c$ in the setting of \Cref{def:aEP}. Phrased differently, we think of $c=c_0$ as fixed, and study asymptotic exceptional points which arise when $\epsilon$ becomes nonzero. This setting is slightly different from the exceptional points observed in \cite{ammari2020time}, where the exceptional points $c=c_0(\epsilon)$ depend on $\epsilon$.
	\end{remark}

    The above definition is different from the definition of an exceptional point; the above definition handles the case where the system ${dx}/{dt} = A_\epsilon x$ is not diagonalizable \emph{to $n$th order}. However, this does not imply that $F_\epsilon$ is not diagonalizable. In fact, if $A$ is a non-diagonalizable matrix, then for almost every matrix $B$ and for almost every $\epsilon$ the characteristic polynomial $\det(A + \epsilon B - \lambda \Id)$ has distinct roots and thus $A + \epsilon B$ is diagonalizable for almost every matrix $B$ and almost every $\epsilon$.
    An answer to the question on how $n$th order asymptotic exceptional points and ``actual'' exceptional points relate in the setting of Theorem \ref{Inductive_Identity_for_Lyapunov-Floquet_decomposition} would be very useful and should be subject of future research.

    Although an $n$th order asymptotic exceptional point does not provide an exceptional point, it is nevertheless a very useful and applicable notion. Being able to construct first order exceptional points brings us one step closer to creating $n$th order asymptotic exceptional points and further to the creation of actual exceptional points. To this end the following theorem will turn out to be useful.



    \begin{theorem}[Criterion for first order exceptional point]\label{thm:criterion_for_first_order_exceptional_point}
        Assume the setting of Theorem \ref{Inductive_Identity_for_Lyapunov-Floquet_decomposition}. Then, the system ${dx}/{dt} = A_\epsilon(t)x$ with no constant first order perturbation, i.e. with $A_1^{(0)}=0$, is posed at a first order exceptional point if and only if
        \begin{enumerate}[(i)]
            \item the constant order Floquet exponent matrix $F_0$ has a double eigenvalue $(F_0)_{ii} = (F_0)_{jj}$, with $i \not= j$,
            \item and the associated entries of the first order perturbation $A_1$ satisfy that \emph{either}\footnote{Here, and throughout this work, we use the phrase ``\emph{either ... or ...}'' to refer to exclusive or.}
                \begin{align*}
                    (A_1^{(n_i - n_j)})_{ij} = 0
                \end{align*}
                \emph{or}
                \begin{align*}
                    (A_1^{(n_j - n_i)})_{ji} = 0,
                \end{align*}
                where $n_i$ and $n_j$ denote the folding numbers of $(F_0)_{ii}$ and $(F_0)_{jj}$, respectively.
        \end{enumerate} 
    \end{theorem}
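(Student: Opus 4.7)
The plan is to reduce the question to a $2 \times 2$ block analysis in the degenerate eigenspace of $F_0$, using \Cref{FloquetExponentMatrixForDoubleFloquetExponent} to express the entries of this block in terms of the prescribed Fourier coefficients of $A_1$.

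For the ``only if'' direction, I would first rule out the case that all eigenvalues of $F_0$ are simple: by \Cref{Inductive_Identity_for_Lyapunov-Floquet_decomposition}, $F_\epsilon$ depends analytically on $\epsilon$, so the eigenvalues of $F_0 + \epsilon F_1$ stay simple and vary analytically for all sufficiently small $\epsilon$, and the associated eigenprojectors produce a diagonalization. This contradicts non-diagonalizability, so $F_0$ must have a double eigenvalue, giving (i). Given $(F_0)_{ii} = (F_0)_{jj}$, \Cref{FloquetExponentMatrixForDoubleFloquetExponent} combined with $A_1^{(0)} = 0$ identifies the restriction of $F_1$ to $\Span(e_i, e_j)$ with $M = \left(\begin{smallmatrix} 0 & b \\ c & 0\end{smallmatrix}\right)$, where $b = (A_1^{(n_i - n_j)})_{ij}$ and $c = (A_1^{(n_j - n_i)})_{ji}$; its eigenvalues are $\pm\sqrt{bc}$. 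If $bc \ne 0$, \Cref{lemma:linear_perturbation_of_double_eigenvalue} shows that the Floquet exponents emanating from $(F_0)_{ii}$ split at first order, so the matrix is diagonalizable for small $\epsilon > 0$; and if $b = c = 0$, then $M = 0$ and $\Span(e_i, e_j)$ remains a scalar block, so $F_0 + \epsilon F_1$ is again diagonalizable. Both contradict non-diagonalizability, so exactly one of $b, c$ must vanish, which is (ii).

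For the ``if'' direction, assume (i) and (ii). Then on $\Span(e_i, e_j)$ the matrix $F_0 + \epsilon F_1$ acts as $(F_0)_{ii} I_2 + \epsilon M$ with $M$ a non-zero nilpotent of square zero, i.e.\ as a genuine $2 \times 2$ Jordan block for every $\epsilon > 0$. Any diagonalization of the full $N \times N$ matrix would, when tracked through the first-order degenerate perturbation calculation of \Cref{cor:eigenvalue_perturbation_of_double_eigenvalue}, induce a diagonalization of the first-order effective Hamiltonian on $\Span(e_i, e_j)$, contradicting its nontrivial Jordan structure. Hence $F_0 + \epsilon F_1$ fails to be diagonalizable in the sense of \Cref{def:aEP}.

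The main obstacle is justifying that off-diagonal entries of $F_1$ coupling $\{i,j\}$ to the complementary subspace cannot destroy the Jordan block when passed to the full matrix. I expect this to be handled by observing, via the Schur-complement / effective Hamiltonian formula in \Cref{cor:eigenvalue_perturbation_of_double_eigenvalue}, that such couplings contribute to the restricted dynamics only at order $\epsilon^2$; they therefore cannot split the doubled eigenvalue at first order in $\epsilon$, and the Jordan structure of the $2 \times 2$ restriction lifts to first-order non-diagonalizability of $F_0 + \epsilon F_1$.
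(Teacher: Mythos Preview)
Your approach is exactly the paper's: reduce to the $2\times 2$ block on the degenerate subspace via \Cref{FloquetExponentMatrixForDoubleFloquetExponent}/\Cref{lemma:linear_perturbation_of_double_eigenvalue}, then read off when that block is a nontrivial Jordan block. The paper's proof is in fact a one-liner that simply writes $F_0+\epsilon F_1$ \emph{as} that $2\times2$ matrix and concludes; it never mentions the coupling to the complementary subspace that you flag as the ``main obstacle'', so your write-up is already more careful than the original.

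That said, your proposed resolution of the obstacle does not close the gap for $N>2$. The effective-Hamiltonian/Schur-complement reasoning of \Cref{cor:eigenvalue_perturbation_of_double_eigenvalue} controls the \emph{eigenvalues} near $(F_0)_{ii}$ only to first order in $\epsilon$, but \Cref{def:aEP} asks about diagonalizability of the exact matrix $F_0+\epsilon F_1$. Order-$\epsilon$ couplings to the complement can split the degenerate pair at order $\epsilon^2$ and render the full matrix diagonalizable even when the $2\times2$ restriction is a Jordan block. Concretely, take $F_0=\diag(0,0,1)$ and
\[
F_1=\begin{pmatrix}0&1&0\\0&0&1\\0&1&0\end{pmatrix},
\]
which has zero diagonal (consistent with $A_1^{(0)}=0$) and whose $(1,2)$-block is $\bigl(\begin{smallmatrix}0&1\\0&0\end{smallmatrix}\bigr)$, so (i) and (ii) hold. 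Yet $F_0+\epsilon F_1$ has characteristic polynomial $t(t^2-t-\epsilon^2)$, hence three distinct eigenvalues $0,\ \tfrac{1\pm\sqrt{1+4\epsilon^2}}{2}$ for every $\epsilon\neq 0$, and is diagonalizable. The paper does not address this either; its argument should be read as a proof for $N=2$ (or for situations where the degenerate block genuinely decouples from the rest of $F_0+\epsilon F_1$), and your Schur-complement idea does not repair it in general.
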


    \begin{proof}
        Due to the expression for $F_0 + \epsilon F_1 + O(\epsilon^2)$ in Lemma \ref{lemma:linear_perturbation_of_double_eigenvalue} and the fact that $A_1^{(0)}= 0$, it holds
        \begin{align*}F_0 + \epsilon F_1 = 
            \begin{pmatrix}
                    (F_0)_{ii} \\
                    & (F_0)_{jj}
            \end{pmatrix}
            + \epsilon\begin{pmatrix}
                    0 & (A_1^{(n_i - n_j)})_{ij} \\
                    (A_1^{(n_j - n_i)})_{ji} & 0
            \end{pmatrix},
        \end{align*}
        and thus $F_0 + \epsilon F_1$ is non-diagonalizable if and only if either $(A_1^{(n_i - n_j)})_{ij} = 0$ or $(A_1^{(n_j - n_i)})_{ji} = 0,$ and the desired result follows.
    \end{proof}

    To illustrate the above statement and to indicate on how it can be used to create first order exceptional points, the following section is devoted to the classical harmonic oscillator and the possibility for asymptotic exceptional points to occur. 

    \subsubsection{Exceptional points in the harmonic oscillator setting}
    First order exceptional points can be created via the following steps. Assume that our system is subject to a parametrization in $c$ and a perturbation in $\epsilon$, that is, we are given a system of the form 
        $$\frac{dx}{dt} = A_\epsilon(c,t)x.$$
    Suppose that the system depends continuously on $c$, satisfies the conditions of Theorem \ref{Inductive_Identity_for_Lyapunov-Floquet_decomposition} and also satisfies $A_1^{(0)} = 0$. Then
        \begin{enumerate}[(i)]
            \item First, one needs to find choices of $c_0$ where one has $\Re((A_0(c_0))_{ii}) = \Re((A_0(c_0))_{jj})$ and where one has either $(A_1(c_0,t)^{(m)})_{ij} = 0$ or $(A_1(c_0,t)^{(-m)})_{ji} = 0$ for some distinct coordinate indices $i,j$ and some frequency $m$;
            \item Next, the modulation period $T$ needs to be chosen such that $(F_0)_{ii} = (F_0)_{jj}$ \emph{and} such that $n_i - n_j = \pm m$. That is, $T > 0$ is given by
                \begin{align}
                   \left|\frac{2\pi m}{(A_0(c_0))_{ii} - (A_0(c_0))_{jj} }\right| = T.
                \end{align}
        \end{enumerate}
    Following this scheme, the following results for the classical harmonic oscillator in the setting of Section \ref{Harmonic_oscillator} are obtained.
    First, let us recall the constant and first order coefficient matrices of the associated first order ODE. With the notation $\alpha = \sqrt{c^2-4 k}$, the constant order coefficient matrix is given by
            \begin{align*}
                A_0(c,k) &= \begin{pmatrix}-\frac{c+ \alpha}{2} \\
                &-\frac{ c - \alpha}{2}\end{pmatrix}.
            \end{align*}
        For $c,k \in [0,4]$ it holds that $$ \Re(-\frac{c+ \alpha}{2}) = \Re(-\frac{ c - \alpha}{2}) \mbox{ is always true iff } \alpha = \sqrt{c^2-4 k} \mbox{ is imaginary,}$$ that is, if and only if $k \geq \frac{c^2}{4}$. Thus, one only needs to check that for some frequency $m$ the off-diagonal values of $A_1^{(\pm m)}$, where $k \geq \frac{c^2}{4}$, fulfills the above conditions. To this end, the following are the different frequency components of $A_1(c,k,\phi)$, when $a \not= b$:
            \begin{align*}
                A^{(\pm a)}_1 &= \frac{1}{4}\begin{pmatrix}-1 - \frac{c}{\alpha} & -1 - \frac{c}{\alpha}\\
                -1 + \frac{c}{\alpha} & -1 + \frac{c}{\alpha} \end{pmatrix},\\
                A^{(\pm b)}_1 &= \frac{e^{\pm \i\phi}}{2\alpha}\begin{pmatrix}1 &\frac{(c + \alpha)^2}{ 4  k}\\
                \frac{(c - \alpha)^2}{ 4  k} &-1\end{pmatrix}.
            \end{align*}
        When $a$ and $b$ are equal,  the sum of the above coefficients gives the coefficients for the corresponding frequencies. In both cases, one can respectively deduce non-existence and existence of asymptotic exceptional points. It turns out that as long as the two modulation frequencies are distinct no creation of exceptional points is possible, whereas when the modulation frequencies $a$ and $b$ are equal, it is possible to choose the damping constant $c$, the spring constant $k$ and the phase shift $\phi$ of the spring constant modulation with respect to the damping constant modulation such that first order exceptional points can be achieved. 

    \begin{lemma}[No exceptional point with distinct modulation frequencies]
        Assume the setting of the classical harmonic oscillator of Section \ref{Harmonic_oscillator}. If the modulation frequencies $a$ and $b$ are distinct, then the associated system
            \begin{align*}
                \frac{dx}{dt} = A_\epsilon(c,k,\phi)x
            \end{align*}
        with $c,k > 0$ is never posed at a first order exceptional point.
    \end{lemma}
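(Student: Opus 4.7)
The plan is to combine \Cref{thm:criterion_for_first_order_exceptional_point} with the explicit formulas for $A_0$ and $A_1$ from \Cref{Harmonic_oscillator}. A first order exceptional point requires two things: (i) a double constant order Floquet exponent $(F_0)_{ii}=(F_0)_{jj}$, and (ii) exactly one of $(A_1^{(n_i-n_j)})_{ij}$, $(A_1^{(n_j-n_i)})_{ji}$ vanishes. I would rule out condition (i) whenever possible and then, in the remaining parameter regime, show that (ii) can never be satisfied.

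\emph{Step 1 (ruling out (i) for $k<c^2/4$).} The eigenvalues of $A_0$ are $-\tfrac{c\pm\alpha}{2}$ with $\alpha=\sqrt{c^2-4k}$. Folding modulo $2\pi\i/T$ affects only imaginary parts, so $(F_0)_{11}=(F_0)_{22}$ forces $\Re((A_0)_{11})=\Re((A_0)_{22})$, i.e.\ $\Re(\alpha)=0$. This requires $k\ge c^2/4$. The boundary $k=c^2/4$ is excluded because there $\alpha=0$ and the original matrix $\bigl(\begin{smallmatrix}0&1\\-k&-c\end{smallmatrix}\bigr)$ is non-diagonalizable, so \Cref{Inductive_Identity_for_Lyapunov-Floquet_decomposition} does not apply.

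\emph{Step 2 (analysing $k>c^2/4$).} Here $\alpha$ is purely imaginary and nonzero, so a suitable period $T$ yields a double constant order Floquet exponent with an integer resonant frequency $m=n_1-n_2$. The Fourier spectrum of $A_1$ is supported in $\{\pm a,\pm b\}$ (with $A_1^{(0)}=0$ since $a,b\neq 0$). If $|m|\notin\{a,b\}$, then both relevant entries $(A_1^{(m)})_{12}$ and $(A_1^{(-m)})_{21}$ are simultaneously zero, making the effective $2\times 2$ block of $F_0+\epsilon F_1$ diagonal and condition (ii) fails. So one must have $|m|\in\{a,b\}$, and since $a\neq b$ these cases are disjoint.

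\emph{Step 3 (the two resonant cases).} If $|m|=a$, the relevant off-diagonal entries are proportional to $-(1+c/\alpha)$ and $-(1-c/\alpha)$. Exactly one of them vanishing requires $c/\alpha=\pm 1$; but $c>0$ is real and $\alpha\in\i\R\setminus\{0\}$, so $c/\alpha$ is purely imaginary and never equal to $\pm 1$. If $|m|=b$, the relevant off-diagonal entries are proportional to $(c+\alpha)^2$ and $(c-\alpha)^2$, and exactly one vanishing requires $c=\pm\alpha$, which is again impossible for $c$ real and $\alpha$ purely imaginary. In both cases condition (ii) of \Cref{thm:criterion_for_first_order_exceptional_point} fails, so no first order exceptional point exists.

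The only arithmetic subtlety I anticipate is making sure I am using the correct formulas for $A_1^{(\pm a)}$ and $A_1^{(\pm b)}$ from \Cref{Harmonic_oscillator} and correctly pairing frequencies with entries $(i,j)=(1,2)$ versus $(2,1)$; beyond that the argument reduces to the observation that real positive $c$ cannot be related to a purely imaginary $\alpha$ by the identities $c/\alpha=\pm 1$ or $c=\pm\alpha$, which is why the distinct-frequency modulation cannot open up a first order exceptional point.
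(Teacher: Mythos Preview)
Your proof is correct and follows essentially the same approach as the paper: apply \Cref{thm:criterion_for_first_order_exceptional_point} and check that the off-diagonal entries of the relevant Fourier components of $A_1$ can never satisfy the ``exactly one vanishes'' condition. The only difference is cosmetic: the paper observes directly that $-1\pm c/\alpha=0$ or $(c\pm\alpha)^2=0$ both force $\alpha=\pm c$, hence $k=0$, while you first restrict to the regime $k>c^2/4$ (where the double constant order Floquet exponent can occur) and then argue that $\alpha$ is purely imaginary there, so $c/\alpha=\pm1$ and $c=\pm\alpha$ are impossible for real $c>0$; these are equivalent formulations of the same obstruction.
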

    \begin{proof}
        In order for the above system to be posed at an exceptional point, one possibility is to have either 
        $$-1 - \frac{c}{\alpha} = 0$$
        or 
        $$ -1 + \frac{c}{\alpha} = 0.$$
        The other possibility is to have either 
        $$\frac{e^{\pm \i\phi}}{2\alpha}\frac{(c + \alpha)^2}{ 4  k} = 0$$
        or 
        $$\frac{e^{\pm \i\phi}}{2\alpha}\frac{(c - \alpha)^2}{ 4  k} = 0.$$
        Both conditions require $k=0$, which is outside of the parameter space and thus not achievable.\footnote{In the second case, the coefficients would even not be well-defined due to the parameter $k$ in the denominator.}
    \end{proof}

    When one chooses the same modulation frequencies for the modulation of the damping and the spring constant, then one needs to have either 
        \begin{align*}
            (A^{(-b = -a)}_1)_{12} &= \frac{e^{-\i\phi}}{2\alpha}\frac{(c + \alpha)^2}{ 4  k} + \frac{1}{4}(-1 - \frac{c}{\alpha})= 0
        \end{align*}
        or
        \begin{align*}
            (A^{(b = a)}_1)_{21} &= \frac{e^{\i\phi}}{2\alpha}\frac{(c - \alpha)^2}{ 4  k} + \frac{1}{4}(-1 + \frac{c}{\alpha}) = 0.
        \end{align*}
    This leads to the following equations
        \begin{align*}
            \frac{e^{-\i\phi}}{2}(c + \alpha)^2 -k\alpha - kc= 0
        \end{align*}
        and
        \begin{align*}
            \frac{e^{\i\phi}}{2}(c - \alpha)^2 -k\alpha+ kc= 0,
        \end{align*}
        respectively. Equating everything in terms of $\exp(\pm \i\phi)$, one obtains
        \begin{align*}
            e^{-\i\phi} = 2\frac{k\alpha + kc}{(c + \alpha)^2}
        \quad \mbox{ and
 } \quad     e^{\i\phi} = 2\frac{k\alpha - kc}{(c - \alpha)^2},
        \end{align*}
        respectively, which reduces to
        \begin{align*}
            e^{-\i\phi} = \frac{2k}{\alpha + c}
         \quad \mbox{ and
 } \quad  
            e^{\i\phi} = \frac{2k}{\alpha - c},
        \end{align*}
        respectively.
        Thus one needs to ask the question whether one of the right-hand sides attains values on the unit circle in $\mathbb{C}$ and if so, one needs to insure that at those points one has
        \begin{align*}
            \frac{2k}{\alpha + c} \not = \frac{\alpha - c}{2k},
        \end{align*}
        in order to have either $(A^{(-b = -a)}_1)_{12}=0$ or $(A^{(b = a)}_1)_{21}=0$. 

        \begin{lemma}[Exceptional points for the classical harmonic oscillator]
            Assume the setting of the classical harmonic oscillator of Section \ref{Harmonic_oscillator}. If the modulation frequencies $a$ and $b$ are equal, then the associated system
            \begin{align*}
                \frac{dx}{dt} = A_\epsilon(c,k,\phi)x
            \end{align*}
        with $c,k > 0$ is posed at a first order exceptional point if and only if
            \begin{align*}
                k = 1, \, c \in (0,2) \text{ and } \phi \in \left\{\i\log\left(\frac{2k}{\alpha + c}\right), -\i\log\left(\frac{2k}{\alpha - c}\right)\right\}.
            \end{align*}.
        \end{lemma}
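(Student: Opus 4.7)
\bigskip

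\noindent\emph{Proof proposal.} The plan is to apply the first order exceptional point criterion of \Cref{thm:criterion_for_first_order_exceptional_point} to the harmonic oscillator system presented in \Cref{Harmonic_oscillator}, and to track through which values of $c,k,\phi$ make the criterion's two conditions meet. Since the first order perturbation has no constant Fourier component when $a=b\neq 0$, this criterion applies directly: we need (i) $(F_0)_{11}=(F_0)_{22}$ and (ii) exactly one of $(A_1^{(n_1-n_2)})_{12}$ and $(A_1^{(n_2-n_1)})_{21}$ to vanish.

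First I would analyze (i). Writing $(A_0)_{11}=-(c+\alpha)/2$ and $(A_0)_{22}=-(c-\alpha)/2$ with $\alpha=\sqrt{c^2-4k}$, the choice of $F_0$ in \Cref{Inductive_Identity_for_Lyapunov-Floquet_decomposition} (lifting imaginary parts to the first Brillouin zone) forces a double eigenvalue $(F_0)_{11}=(F_0)_{22}$ exactly when $\Re((A_0)_{11})=\Re((A_0)_{22})$, i.e.\ when $\alpha$ is purely imaginary, which happens iff $c^2<4k$. The modulation period $T$ must then be chosen so that $(A_0)_{11}-(A_0)_{22}=\alpha\in\frac{2\pi\i}{T}\Z$; since $A_1$ only has Fourier components of order $\pm a$ (as $a=b$), condition (ii) can only be met when the resonant frequencies equal $\pm a$, so we fix $T$ by $n_1-n_2=\pm a$.

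Next I would implement (ii) by reading off the preceding paragraph of the paper: the relevant off-diagonal entries are
\begin{align*}
(A^{(-a)}_1)_{12}&=\tfrac{e^{-\i\phi}}{2\alpha}\tfrac{(c+\alpha)^2}{4k}+\tfrac{1}{4}\bigl(-1-\tfrac{c}{\alpha}\bigr),\\
(A^{(a)}_1)_{21}&=\tfrac{e^{\i\phi}}{2\alpha}\tfrac{(c-\alpha)^2}{4k}+\tfrac{1}{4}\bigl(-1+\tfrac{c}{\alpha}\bigr).
\end{align*}
Setting either to zero and simplifying gives $e^{-\i\phi}=\tfrac{2k}{\alpha+c}$ or $e^{\i\phi}=\tfrac{2k}{\alpha-c}$, respectively. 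Since $\phi\in\R$, these are admissible only if the right-hand sides lie on the unit circle. Using $\alpha=\i\sqrt{4k-c^2}$, I would compute $|c\pm\alpha|=\sqrt{c^2+(4k-c^2)}=2\sqrt{k}$, hence $|2k/(\alpha\pm c)|=\sqrt{k}$, and the unit-modulus condition forces $k=1$. The constraint $c^2<4k=4$ with $c>0$ then gives $c\in(0,2)$, and solving $e^{\pm\i\phi}=2k/(\alpha\mp c)$ for $\phi$ yields the two claimed values.

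Finally I would verify exclusivity of the ``either\ldots or\ldots'' : if both equations held simultaneously, multiplying them yields $1=e^{-\i\phi}\cdot e^{\i\phi}=\tfrac{(2k)^2}{\alpha^2-c^2}=\tfrac{4}{-4}=-1$ at $k=1$, a contradiction. Hence for every admissible $(c,k,\phi)$ exactly one of the two factors vanishes, and the two conditions of \Cref{thm:criterion_for_first_order_exceptional_point} are simultaneously satisfied precisely on the set described in the lemma. The main obstacle I anticipate is not conceptual but bookkeeping: keeping the folding choice of $T$, the sign convention in the two alternatives, and the pass from the raw vanishing equations to their unit-circle form consistent, and verifying that no spurious solutions slip in when $\alpha$ is set to its imaginary value.
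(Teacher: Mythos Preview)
Your proposal is correct and follows essentially the same route as the paper: apply \Cref{thm:criterion_for_first_order_exceptional_point}, reduce condition~(i) to $\alpha$ purely imaginary, reduce condition~(ii) to the two unit-circle constraints $e^{\mp\i\phi}=2k/(\alpha\pm c)$, and then check exclusivity. Your algebra is in fact slightly cleaner than the paper's in two places: computing $|c\pm\alpha|=2\sqrt{k}$ directly from $\alpha=\i\sqrt{4k-c^2}$ gives $k=1$ in one line, and your exclusivity check (multiplying the two equations to obtain $1=4k^2/(\alpha^2-c^2)=-k$) is more transparent than the paper's manipulation via $2k/(\alpha+c)\neq(\alpha-c)/(2k)$. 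One minor sign: $(A_0)_{11}-(A_0)_{22}=-\alpha$, not $\alpha$, but this does not affect the argument.
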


        \begin{proof} By the reasoning above, at an exceptional point one needs to have $$\left|\frac{2k}{\alpha + c}\right|=1 \text{ or } \left|\frac{2k}{\alpha - c}\right|=1$$ and $$\frac{2k}{\alpha + c} \not= \frac{\alpha - c}{2k},$$
            where $k,c \in [0,4]$ and $k > c^2/4$, in order to have $\Re((A_0)_{11}) = \Re((A_0)_{22})$.

            Since $k,c \in [0,4]$ and $k > c^2/4$, the first identity $\abs{\frac{2k}{\alpha + c}}=1$ is equivalent to $\abs{c^2 - 4k} + c^2 = 4k^2$. Using $k > c^2/4$ again, it follows that $4k = 4k^2$ and thus $k = 1$. By the same reasoning, it follows that the second identity is also equivalent to $k = 1$.
            
            Hence, one obtains that at an exceptional point, it holds
            \begin{align*}
                k = 1, \, c \in [0,2) \text{ and } \phi \in \left\{\i\log\left(\frac{2k}{\alpha + c}\right), -\i\log\left(\frac{2k}{\alpha - c}\right)\right\}.
            \end{align*}

            On the contrary, given $k = 1, \, c \in [0,2) \text{ and } \phi \in \left\{\i\log\left(\frac{2k}{\alpha + c}\right), -\i\log\left(\frac{2k}{\alpha - c}\right)\right\}$, it holds that $\abs{\frac{2k}{\alpha + c}}=1 \text{ and } \abs{\frac{2k}{\alpha - c}}=1$. Furthermore, $\frac{2k}{\alpha + c} \not= \frac{\alpha - c}{2k},$ since this inequality is equivalent to 
                $$ 4k^2 \not= \abs{c^2 - 4k} - c^2,$$
                which reduces to 
                $$ k^2 -k + \frac{c^2}{2} \not= 0,$$
            and is thus satisfied whenever $k \not= \frac{1}{2} \pm \sqrt{\frac{1}{4} - \frac{c^2}{2}}$. Hence $c \not= 0$ and it follows that the system is posed at an exceptional point whenever $k=1$ and $c \in (0,2)$ and the phase shift $\phi$ is chosen accordingly. 
        \end{proof}

\section{Application: Floquet metamaterials}\label{ch:metamaterials}

Until now, the asymptotic Floquet exponent theory was only applied to the very simple example of the classical harmonic oscillator. In this section, it will be used to investigate Floquet metamaterials. In fact, this was our initial motivation for studying the asymptotics of Floquet exponents.

\subsection{Setting}
A Floquet metamaterial is a prototype material of the following form. It is composed of two materials: the \emph{background} and the \emph{resonator material}. While the background material fills almost the whole space $\mathbb{R}^d$, the resonator material only occupies disconnected domains $D_1, \ldots, D_N  \subset \mathbb{R}^d$. Those can be either repeated periodically in which case one speaks of an \emph{infinite periodic} structure, or they are not repeated periodically, but the resonator material is precisely given by ${D} = D_1 \cup \ldots \cup D_N$, in which case one speaks of a \emph{finite} structure. For simplicity, we consider only the finite case. The analysis can be easily extended to infinite periodic structures. Background and resonator material are characterized by their corresponding material parameters $\rho$ and $\kappa$, which correspond to the density and the bulk modulus in the setting of acoustic waves. To be more precise, the density $\rho$ and the bulk modulus $\kappa$ are defined as 
\begin{equation} \label{eq:resonatormod}
	\kappa(x,t) = \begin{cases}
		\kappa_0, & x \in \R^d \setminus \overline{D}, \\  \kappa_r \kappa_i(t), & x\in D_i,  i = 1,\ldots,N,
	\end{cases} \qquad \rho(x,t) = \begin{cases}
		\rho_0, & x \in \R^d \setminus \overline{D}, \\  \rho_r \rho_i(t), & x\in D_i,  i = 1,\ldots,N, \end{cases}
\end{equation}
where $\rho_0, \kappa_0, \rho_r, \kappa_r$ are positive constants and $\rho_i(t)$ and $\kappa_i(t)$ are $T$-periodic functions in $t$.  
That is, the density $\rho$ and the bulk modulus $\kappa$ are piecewise constant in space and also in time inside the background material. However, the material parameters are time-modulated inside the resonators.
The goal is to study  the \emph{subwavelength quasifrequencies} associated with the  wave equation 
\begin{equation}\label{eq:wave}
	\left(\frac{\p }{\p t } \frac{1}{\kappa(x,t)} \frac{\p}{\p t} - \nabla \cdot \frac{1}{\rho(x,t)} \nabla\right) u(x,t) = 0, \quad x\in \R^d, t\in \R.
\end{equation}

When the parameters of the wave equation \eqref{eq:wave} are periodic in time, one can apply the Floquet transform. This leads to a parametrized set of problems with restricted solution spaces. Indeed, one obtains
\begin{equation} \label{eq:wave_transf}
	\begin{cases}\ \ds \left(\frac{\p }{\p t } \frac{1}{\kappa(x,t)} \frac{\p}{\p t} - \nabla \cdot \frac{1}{\rho(x,t)} \nabla\right) u(x,t) = 0,\\[0.3em]
		\	u(x,t)e^{-\i \omega t} \text{ is $T$-periodic in $t$}, 
	\end{cases}
\end{equation}
where $\omega$ ranges over the elements of the Brillouin zone defined as before, $Y_t^* := \mathbb{C} / (\Omega \Z)$, with $\Omega$ being the frequency of the material parameters; $\Omega = {2\pi}/{T}$. If a (nontrivial) solution $u(x,t)$ to \eqref{eq:wave_transf} exists for an $\omega \in Y_t^*$, then $u(x,t)$ is called a \emph{Bloch solution} and $\omega$ its associated \emph{quasifrequency}.

In order to achieve \emph{subwavelength} quasifrequencies, one needs to assume that the \emph{contrast parameter} 
$$\delta := \frac{\rho_r}{\rho_0}$$
is small and consider the solutions to \eqref{eq:wave} as $\delta \rightarrow 0$.
One can regard \eqref{eq:wave} as parametrized with the contrast parameter $\delta$ and one can consider its solutions as $\delta \rightarrow 0$. This is called the \emph{high contrast regime}. In the setting where the frequency $\Omega$ is of order 
$O(\delta^{1/2})$, subwavelength frequencies are introduced as in \cite{ammari2020time} and are given by the following definition.
\begin{definition}[Subwavelength quasifrequency] \label{def:sub}
	A quasifrequency $\omega = \omega(\delta) \in Y^*_t$ of \eqref{eq:wave_transf} is said to be a \emph{subwavelength quasifrequency} if there is a corresponding Bloch solution $u(x,t)$, depending continuously on $\delta$, which can be written as
	$$u(x,t)= e^{\i \omega t}\sum_{n = -\infty}^\infty v_n(x)e^{\i n\Omega t},$$
	where 
	$$\omega(\delta) \rightarrow 0 \in Y_t^* \text{ and }  M(\delta)\Omega(\delta) \rightarrow 0  \in \mathbb{R} \text{ as }  \delta \to 0,$$
	for some integer-valued function $M=M(\delta)$ such that, as $\delta \to 0$, we have
	$$\sum_{n = -\infty}^\infty \|v_n\|_{L^2(D)} = \sum_{n = -M}^M \|v_n\|_{L^2(D)} + o(1).$$
\end{definition}

In \cite{ammari2020time}, a capacitance matrix approximation to the subwavelength quasifrequencies as $\delta \rightarrow 0$ was proven. In order to state the main result of \cite{ammari2020time}, we need the following definitions of the \emph{time-dependent contrast parameters}, \emph{wave speed} and \emph{time-dependent wave speeds}
$$\delta_i(t) = \frac{\rho_r \rho_i(t)}{\rho_0}, \quad v_0 = \sqrt{\frac{\kappa_0}{\rho_0}}, \quad v_i(t) = \sqrt{\frac{\kappa_r \kappa_i(t)}{\rho_r \rho_i(t)}},$$
respectively.

The \emph{capacitance matrix} is a way to encode the geometry of the metamaterial with a finite structure and also with an  infinite periodic structure into a square matrix. This matrix has the same dimension as the total number of resonators (or, in the case of a periodic structure, the total number of resonators inside a fundamental domain). The capacitance matrix theory in the high contrast regime of metamaterials is reviewed in \cite{ammari2021functional}. It is derived using \emph{Gohberg-Sigal theory} and \emph{layer potential techniques} for the finite structure case. Then, \emph{Floquet-Bloch theory} allows to extend the results to metamaterials with an infinite structure. 

For the purpose of this paper, we omit the precise definition of the capacitance matrix but observe that this is a Hermitian matrix of dimension $N$.

\begin{theorem} \label{thm:EriksMasterEquation}
    In dimension $d=3$ and being in the high contrast regime of \eqref{eq:wave}, assume that the material parameters $\kappa$ and $\rho$ are given by \eqref{eq:resonatormod} and that they satisfy 
    $$\frac{1}{\rho_i(t)} = \sum_{n = -M}^M r_{i,n} e^{\i n \Omega t}, \qquad \frac{1}{\kappa_i(t)} = \sum_{n = -M}^M k_{i,n} e^{\i n \Omega t},$$
    for some $M(\delta)\in \N$ satisfying $M(\delta) = O\left(\delta^{-\gamma/2}\right)$ as $\delta \rightarrow 0$ for some fixed $0<\gamma<1$. Furthermore, suppose that the associated time-dependent contrast parameters, wave speed and time-dependent wave speeds satisfy for all $t\in \R$ and $i=1,\ldots,N$,
    $$\delta_i(t) = O(\delta), \quad v = O(1), \quad v_i(t) = O(1) \quad \text{as } \hspace{0.2cm} \delta \rightarrow 0,$$
    respectively.
    Then, as $\delta \to 0$, the subwavelength quasifrequencies $\omega(\delta) \in Y^*_t$ to the wave equation \eqref{eq:wave} in the high contrast regime are, to leading order, given by the quasifrequencies of the system of ordinary differential equations in $y(t) = (y_i(t))_i$,
	\begin{equation}\label{eq:C_ODE}
		\sum_{j=1}^N C_{ij} y_j(t) = -\frac{|D_i|}{\delta_i(t)}\frac{d}{d t}\left(\frac{1}{\delta_iv_i^2}\frac{d (\delta_iy_i)}{d t}\right),
	\end{equation}
	for $i=1,\ldots,N$, where $C$ denotes the capacitance matrix associated to the finite structure (or the infinite periodic structure) of the considered metamaterial.
\end{theorem}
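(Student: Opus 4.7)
The plan is to reduce the time-dependent wave equation to a countable family of coupled Helmholtz problems via Fourier decomposition in time, then use high-contrast layer-potential asymptotics to replace those boundary value problems by an algebraic relation governed by the capacitance matrix, and finally re-sum in time to produce \eqref{eq:C_ODE}. First I would write a subwavelength Bloch solution as $u(x,t)=e^{\i\omega t}\sum_{n\in\Z} v_n(x)e^{\i n\Omega t}$ and expand $1/\rho$ and $1/\kappa$ in their finite Fourier series. Plugging these into \eqref{eq:wave} and matching Fourier modes in $t$ decouples the time derivative and yields, for each $n$, a Helmholtz-type equation for $v_n$: inside the background the equation is $(\Delta+k_n^2)v_n=0$ with $k_n=(\omega+n\Omega)/v_0$, while inside the resonators the Fourier modes are coupled through the products of Fourier coefficients of $1/\rho_i$, $1/\kappa_i$ with $v_m$.

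Next I would represent each $v_n$ outside $D$ via the single-layer potential $\mathcal{S}_D^{k_n}[\varphi_n]$, choose a suitable Ansatz inside $D$, and impose the transmission conditions on $\partial D$ coming from the continuity of $u$ and of $\rho^{-1}\partial_\nu u$. Because $\delta_i(t)=O(\delta)$ and $\Omega=O(\delta^{1/2})$, the jump of the normal derivative contains the small factor $\delta$, so in the high-contrast regime the leading-order balance forces $v_n$ to be (spatially) close to a constant $y_{i,n}$ on each resonator $D_i$, with correction $O(\delta)$. Using the standard asymptotic expansion $\mathcal{S}_D^{k}=\mathcal{S}_D^{0}+O(k^2)$ and $(\mathcal{K}_D^{k})^*=(\mathcal{K}_D^{0})^*+O(k^2)$, together with the identity that relates $\mathcal{S}_D^{0}$ applied to the equilibrium densities of $\partial D_i$ to the capacitance matrix $C$, one obtains that the leading-order balance of the Neumann jump across each $\partial D_i$ reads
\begin{equation*}
\sum_{j=1}^N C_{ij}\, y_{j,n} \;\approx\; \frac{|D_i|}{\delta_i\,v_i^2}\,(\omega+n\Omega)^2\,y_{i,n},
\end{equation*}
to leading order, with all errors uniformly of lower order in $\delta$ provided the truncation $M(\delta)=O(\delta^{-\gamma/2})$ is respected (this controls the tail of the Fourier series against the $O(k_n^2)$ corrections).

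Setting $y_i(t):=\sum_n y_{i,n}e^{\i(\omega+n\Omega)t}$ and recognising that multiplication by $(\omega+n\Omega)^2$ in Fourier variables corresponds to $-\mathrm{d}^2/\mathrm{d}t^2$, and that multiplication by the Fourier series of $1/(\delta_i v_i^2)$ and $1/\delta_i$ corresponds in the time domain to the products with these functions, I would undo the Fourier sum to recover
\begin{equation*}
\sum_{j=1}^N C_{ij}\,y_j(t) \;=\; -\frac{|D_i|}{\delta_i(t)}\frac{\mathrm d}{\mathrm d t}\!\left(\frac{1}{\delta_i v_i^2}\frac{\mathrm d(\delta_i y_i)}{\mathrm d t}\right),
\end{equation*}
which is \eqref{eq:C_ODE}. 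The hardest part, which is also where the assumption $M(\delta)=O(\delta^{-\gamma/2})$ with $0<\gamma<1$ is essential, is the uniform control of the coupled infinite Fourier system: one must show that the high-frequency modes $|n|>M$ contribute negligibly to the capacitance balance and that the replacement of $v_n$ by the constant $y_{i,n}$ on $D_i$ produces an error that remains summable and of strictly lower order than the retained terms. I would handle this by a careful a priori estimate on $\|v_n-y_{i,n}\|_{L^2(D_i)}$ in terms of $\delta$ and $|n|\Omega$, using the Definition \ref{def:sub} tail bound on $\sum\|v_n\|_{L^2(D)}$ together with the bound $k_n=O(|n|\delta^{1/2})$, and then verify that the residuals remain $o(1)$ relative to the leading capacitance term uniformly in the subwavelength regime.
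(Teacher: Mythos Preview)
The paper does not prove this theorem at all: it is quoted verbatim from \cite{ammari2020time}, as the sentence preceding the statement makes explicit. So there is nothing in the paper to compare your argument against beyond the fact that your outline is, in spirit, the approach of that reference (Fourier decomposition in $t$, coupled Helmholtz problems for the modes $v_n$, layer-potential asymptotics in the high-contrast limit, capacitance-matrix reduction).

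That said, your displayed Fourier-mode balance
\[
\sum_{j=1}^N C_{ij}\, y_{j,n} \;\approx\; \frac{|D_i|}{\delta_i\,v_i^2}\,(\omega+n\Omega)^2\,y_{i,n}
\]
is too schematic and, if taken literally, re-sums to $-\tfrac{|D_i|}{\delta_i v_i^2}\,\ddot y_i$, not to the nested structure $-\tfrac{|D_i|}{\delta_i}\tfrac{d}{dt}\bigl(\tfrac{1}{\delta_i v_i^2}\tfrac{d(\delta_i y_i)}{dt}\bigr)$ in \eqref{eq:C_ODE}. The three time-dependent factors sit where they do for specific reasons: the inner $\delta_i$ multiplying $y_i$ comes from the transmission condition $\partial_\nu u|_+ = \delta_i(t)\,\partial_\nu u|_-$ (the quantity that is approximately spatially constant and governs the exterior field is not $y_i$ but $\delta_i y_i$, up to normalisation); the middle factor $1/(\delta_i v_i^2)=\rho_0/(\kappa_r\kappa_i)$ is inherited from the $\partial_t(\kappa^{-1}\partial_t\,\cdot)$ part of the operator after integrating over $D_i$; and the outer $1/\delta_i$ appears when the interior normal derivative is converted, via the jump condition, into the exterior one that the capacitance matrix acts on. At the Fourier level these are genuine convolutions with the coefficients $r_{i,m}$ and $k_{i,m}$, not scalar multiplications, and your re-summing paragraph does not explain how the precise ordering of derivatives and multiplications emerges. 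This is the real content of the derivation; the truncation and tail estimates you flag as the hardest part are important for rigor but secondary to getting the algebraic structure of \eqref{eq:C_ODE} right.
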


One can rewrite \eqref{eq:C_ODE} into the following system of Hill equations
\begin{equation}\label{eq:hill}
	\Psi''(t)+ M(t)\Psi(t)=0,
\end{equation}
where the vector $\Psi$ and the matrix $M$ are defined as
$$\Psi(t) = \left(\frac{\sqrt{\delta_i(t)}}{v_i(t)}y_i(t)\right)_{i=1}^N, \quad M(t) = W_1(t)C W_2(t) + W_3(t),$$
with $W_1, W_2$ and $W_3$ being diagonal matrices with corresponding diagonal entries
$$\left(W_1\right)_{ii} = \frac{v_i\delta_i^{3/2}}{|D_i|}, \qquad \left(W_2\right)_{ii} =\frac{v_i}{\sqrt{\delta_i}}, \qquad \left(W_3\right)_{ii} = \frac{\sqrt{\delta_i}v_i}{2}\frac{d }{d t}\frac{1}{(\delta_iv_i^2)^{3/2}}\frac{d (\delta_iv_i^2)}{d t},$$
for $i=1,\ldots,N$.

The asymptotic Floquet theory developed in previous sections will be applied to \eqref{eq:hill} in this section. In order to provide closed form solutions, we will treat the case of two equally sized resonators (either in the whole structure or in the fundamental domain).

\subsection{Dimer of time-modulated subwavelength resonators}\label{sec:2ResCase}

This section is devoted to applying the theory of asymptotic Floquet exponents to investigate subwavelength resonances in the setting of a dimer (i.e., a system of two identical resonators) of subwavelength resonators. In this setting, either precisely two resonators $D_1$ and $D_2$ are present in a background material or they are embedded into a fundamental domain which is repeated periodically. As pointed out before,  in both settings a capacitance matrix formulation exists. In the following, we will thus not differ whether we are in the finite or the infinite periodic setting and call the corresponding capacitance matrix simply $C$.

The goal of this section is thus to apply the theory on asymptotic Floquet exponents to the setting of the system of Hill equations given by \eqref{eq:hill}.

In order to apply the theory of asymptotic Floquet exponents, we will assume that the number of resonators is equal to two, that they are equal in size, i.e. that $\abs{D_1} = \abs{D_2}$, where $\abs{\cdot}$ denotes the volume,  and that the material parameters are given by
    \begin{align*}
        \frac{1}{\kappa_i} = 1 + \epsilon \gamma_i,\quad 
        \frac{1}{\rho_i} = 1 + \epsilon \eta_i,
    \end{align*}
where $\gamma_i$ and $\eta_i$ have finite Fourier series for $i=1,2$. Moreover, for simplicity we assume that $v_r := \sqrt{\kappa_r/\rho_0} = 1$.

Transforming \eqref{eq:hill} into a first order ODE and diagonalizing the leading order coefficient in $\epsilon$, one obtains the coefficient matrices
    \begin{align*}
        \frac{d\Phi}{dt} = (A_0 + \epsilon A_1 + \epsilon^2A_2 + O(\epsilon^3))\Phi.
    \end{align*}
The leading order diagonalized coefficient matrix is given by
    \begin{align}\label{eq:A0_2Res}
        A_0 = \frac{\sqrt{\delta}\i}{\sqrt{2|D_1|}}\begin{pmatrix}
            -\sqrt{C_{11} + C_{22} + \alpha}\\
            &  \sqrt{C_{11} + C_{22} + \alpha}\\
            && -\sqrt{C_{11} + C_{22} - \alpha}\\
            &&&\sqrt{C_{11} + C_{22} - \alpha}
            \end{pmatrix},
    \end{align}
    where $$\alpha = \sqrt{(C_{11} - C_{22})^2 + 4\abs{C_{12}}^2} \quad \mbox{ and
}    \quad
        C = \begin{pmatrix}
            C_{11} &C_{12}\\
            \overline{C_{12}} &C_{22}
        \end{pmatrix}
    $$
    denotes the associated capacitance matrix; see, for instance, \cite{ammari2021functional}.

\subsubsection{Coefficient matrices for the case of modulated densities}

When only the densities are modulated and the bulk moduli are constant,  the first and second order coefficient matrices turn out to be the following. The first order coefficient matrix $A_1$ is given by
\begin{align}\label{eq:A1_rho_modulation}
    A_1 = \frac{\i\sqrt{\delta} (\eta_1-\eta_2)}{2 \sqrt{2 |D_1|}}\begin{pmatrix}
    0 & 0 & \ds -\frac{C_{11}-C_{22}- \alpha}{ \sqrt{C_{11}+C_{22}-\alpha}} & \ds \frac{C_{11}-C_{22}- \alpha}{ \sqrt{C_{11}+C_{22}-\alpha}} \\
    \nm
    0 & 0 & \ds -\frac{C_{11}-C_{22}- \alpha}{ \sqrt{C_{11}+C_{22}-\alpha}} & \ds \frac{C_{11}-C_{22}- \alpha}{ \sqrt{C_{11}+C_{22}-\alpha}} \\
    \nm \ds
    -\frac{C_{11}-C_{22}+\alpha}{ \sqrt{ C_{11}+C_{22}+\alpha}} & \ds \frac{C_{11}-C_{22}+\alpha}{ \sqrt{ C_{11}+C_{22}+\alpha}} & 0 & 0 \\
    \nm \ds
    -\frac{C_{11}-C_{22}+\alpha}{ \sqrt{ C_{11}+C_{22}+\alpha}} & \ds \frac{C_{11}-C_{22}+\alpha}{ \sqrt{ C_{11}+C_{22}+\alpha}} & 0 & 0 
    \end{pmatrix}
\end{align}
and the second order coefficient matrix $A_2$ is given by
\begin{align*}
    A_2 = \begin{pmatrix}
        A_2^{11} & A_2^{12} \\ A_2^{21} & A_2^{22}
    \end{pmatrix},
\end{align*}
where $A_2^{11}, A_2^{12}, A_2^{21}, A_2^{22}$ are $2$-by-$2$ block matrices defined by
\begin{align*}
A_2^{11} &= \frac{\i\sqrt{\delta}}{\sqrt{2 |D_1|}}\frac{\abs{C_{12}}^2}{\alpha\sqrt{C_{11}+C_{22}+\alpha}}(\eta_1-\eta_2)^2\begin{pmatrix}
    -1 &
             1 \\ %
             -1 &
             1 \\ %
\end{pmatrix},\\
A_2^{22} &= \frac{\i\sqrt{\delta}}{\sqrt{2 |D_1|}}\frac{\abs{C_{12}}^2}{\alpha\sqrt{C_{11}+C_{22}-\alpha}}(\eta_1-\eta_2)^2\begin{pmatrix}
    1 &
             -1 \\
             1 &
             -1 \\
\end{pmatrix},\\
A_2^{12} &= \frac{\i\sqrt{\delta}}{\sqrt{2 |D_1|} }\left(\frac{\abs{C_{12}}^2}{\alpha\sqrt{ C_{11}+C_{22}-\alpha}}(\eta_1^2-\eta_2^2) + \frac{(C_{11}-C_{22})(C_{11}-C_{22}-\alpha)}{ 2\alpha\sqrt{ C_{11}+C_{22}-\alpha}}(\eta_1-\eta_2)\eta_2\right)\begin{pmatrix}
    -1 &
             1 \\
             -1 &
             1 \\
\end{pmatrix},\\
A_2^{21} &= \frac{\i\sqrt{\delta}}{\sqrt{2 } |D_1|}\left(\frac{\abs{C_{12}}^2}{\alpha\sqrt{ C_{11}+C_{22}+\alpha}}(\eta_1^2-\eta_2^2) + \frac{(C_{11}-C_{22})(C_{11}-C_{22}+\alpha)}{ 2\alpha\sqrt{ C_{11}+C_{22}+\alpha}}(\eta_1-\eta_2)\eta_2\right)\begin{pmatrix}
    1 &
             -1 \\ %
             1 &
             -1 \\ %
\end{pmatrix}.
\end{align*}

\subsubsection{Coefficient matrices for the case of modulated bulk moduli}
The situation is more intricate when instead of modulating the densities, the bulk moduli are modulated. Here, only the first order coefficient matrix $A_1$ is presented and the second order coefficient matrix $A_2$ is omitted. The first order coefficient matrix $A_1$ is given by

\begin{align}\label{eq:A1+B1_kappa_modulation}
    A_1 = \begin{pmatrix}
        A_{11} & A_{12}\\
        A_{21} & A_{22}
    \end{pmatrix} + 
    \begin{pmatrix}
        B_{11} & B_{12}\\
        B_{21} & B_{22}
    \end{pmatrix},
\end{align}
where $A_{11}, A_{12}, A_{21}, A_{22}$ are given by
\begin{align}\label{eq:A1_kappa_modulation}
    A_{11} &=  &\frac{\i\sqrt{\delta}}{4 \sqrt{2 |D_1| } }\frac{\sqrt{ C_{11}+C_{22}+\alpha}}{\alpha} &\left(\gamma_1(\alpha+C_{11}-C_{22}) + \gamma_2(\alpha-C_{11}+C_{22})\right)\begin{pmatrix} 1 &
    -1 \\
   1 &  
    -1 \\  
\end{pmatrix},\\
    A_{22} &= &\frac{\i\sqrt{\delta}}{4 \sqrt{2 |D_1| } }\frac{\sqrt{C_{11}+C_{22}-\alpha}}{\alpha} &\left(\gamma_2(\alpha+C_{11}-C_{22}) + \gamma_1(\alpha-C_{11}+C_{22})\right)\begin{pmatrix}
        1 &
        -1 \\      
        1 &      
        -1 \\      
    \end{pmatrix},\\
    A_{12} &=   &\frac{\i\sqrt{\delta}}{4\sqrt{2|D_1|}} \frac{\alpha-(C_{11}-C_{22})}{\alpha \sqrt{C_{11}+C_{22}-\alpha}}&
    (C_{11}+C_{22})(\gamma_1-\gamma_2)\begin{pmatrix}
        -1  &
        1 \\  
        -1  &  
        1 \\  
    \end{pmatrix},\\
    A_{21} &= &\frac{\i\sqrt{\delta}}{ 4 \sqrt{2 |D_1| } }\frac{\alpha+C_{11}-C_{22}}{ \alpha \sqrt{\left(\alpha+C_{11}+C_{22}\right)}}&
    (C_{11}+C_{22})(\gamma_1-\gamma_2)\begin{pmatrix}
        -1 &
         1 \\
        -1 &
         1 \\
    \end{pmatrix} 
\end{align}
and $B_{11}, B_{12}, B_{21}, B_{22}$ correspond to the summand $W_3$ and are given by

\begin{equation}\label{eq:B1_kappa_modulation}
\begin{aligned}
    B_{11} &= \frac{\i}{8}\sqrt{\frac{|D_1|}{2\delta}} \frac{\sqrt{C_{11}+C_{22}+\alpha}}{\alpha(C_{11} C_{22}-\abs{C_{12}}^2)}\Big[(C_{22} (\alpha + C_{11}-C_{22})-2 \abs{C_{12}}^2) \frac{d^2\gamma_1}{dt^2}(t)\begin{pmatrix}
        1 & -1 \\
        1 & -1 
    \end{pmatrix} \\
    &+(C_{11} (\alpha-(C_{11}-C_{22}))-2 \abs{C_{12}}^2) \frac{d^2\gamma_2}{dt^2}(t)    
    \begin{pmatrix}
        1 & -1 \\
        1 & -1        
    \end{pmatrix}\Big],\\
    B_{22} &= \frac{\i}{8}\sqrt{\frac{|D_1|}{2\delta}} \frac{\sqrt{C_{11}+C_{22}-\alpha}}{(C_{11}C_{22} - \abs{C_{12}}^2)\alpha} \Big[(C_{22}(\alpha-(C_{11}-C_{22}))+2 \abs{C_{12}}^2)\frac{d^2\gamma_1}{dt^2}(t)    \begin{pmatrix}
        1 & -1 \\
        1 & -1 
    \end{pmatrix}\\
    &+(C_{11} (\alpha+C_{11}-C_{22})+2 \abs{C_{12}}^2) \frac{d^2\gamma_2}{dt^2}(t)
    \begin{pmatrix}
        1 & -1 \\
        1 & -1 
    \end{pmatrix}\Big],\\
    B_{12} &=\frac{\i}{8}\sqrt{\frac{|D_1|}{2\delta}} \frac{C_{11}-C_{22}-\alpha}{\alpha\sqrt{C_{11}+C_{22}-\alpha}} \left(\frac{d^2\gamma_1}{dt^2}(t)-\frac{d^2\gamma_2}{dt^2}(t)\right)\begin{pmatrix}
        1 & -1 \\
        1 & -1 
    \end{pmatrix},\\
    B_{21} &= -\frac{\i}{8}\sqrt{\frac{|D_1|}{2\delta}} \frac{C_{11}-C_{22}+\alpha}{\alpha\sqrt{C_{11}+C_{22}+\alpha}}\left(\frac{d^2\gamma_1}{dt^2}(t)-\frac{d^2\gamma_2}{dt^2}(t)\right)\begin{pmatrix}
        1 & -1 \\
        1 & -1 
    \end{pmatrix}. 
\end{aligned}
\end{equation}

\subsection{Floquet exponents and their perturbations}
    Having computed the coefficient matrices for the case of two equally sized resonators, it is possible to apply the theory of asymptotic Floquet exponents of Section \ref{asymptotic_analysis_of_floquet_exponents}. This leads to the following results. 

        \begin{lemma}[When the perturbation is non-constant then $F_1$ is off-diagonal]
            In the setting of Theorem \ref{thm:EriksMasterEquation} and supposing that the number of resonators is equal to two and that they are equally sized, it holds that if the perturbations of $1/\kappa_i$ for $i=1,2$ have mean zero over one modulation period, then the first order Floquet exponent matrix $F_1$ is off-diagonal.            
        \end{lemma}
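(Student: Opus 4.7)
The plan is to invoke \Cref{lemma:diagonal_entries_of_F1}, which tells us that
\[
(F_1)_{kk} = \left(\frac{1}{T}\int_0^T A_1(t)\,dt\right)_{kk},
\]
so that $F_1$ is off-diagonal precisely when every diagonal entry of $A_1$ has zero mean over one period. Thus the whole task reduces to inspecting the diagonal entries of $A_1$ constructed in the previous subsection and showing that their time-averages vanish under the hypothesis that each $\gamma_i$ has zero mean.

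First, I would read off the diagonal entries of $A_1$ from \eqref{eq:A1+B1_kappa_modulation}. Writing $A_1=A+B$ with the blocks given in \eqref{eq:A1_kappa_modulation} and \eqref{eq:B1_kappa_modulation}, the diagonal entries of $A_1$ are the diagonal entries of the four blocks $A_{11},A_{22},B_{11},B_{22}$. Each of these four blocks has the outer matrix factor $\bigl(\begin{smallmatrix}1&-1\\1&-1\end{smallmatrix}\bigr)$, so the diagonal entries of $A_{jj}$ are $\pm$(geometry-dependent constants)$\,\cdot\bigl(c_1^{(j)}\gamma_1(t)+c_2^{(j)}\gamma_2(t)\bigr)$, while those of $B_{jj}$ are $\pm$(geometry-dependent constants)$\,\cdot\bigl(d_1^{(j)}\ddot\gamma_1(t)+d_2^{(j)}\ddot\gamma_2(t)\bigr)$.

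Next I would compute the time averages of these two types of terms separately. The average of $\ddot\gamma_i$ over one period is automatic: since $\gamma_i$ is $T$-periodic and smooth, $\int_0^T \ddot\gamma_i(t)\,dt = [\dot\gamma_i]_0^T = 0$, so the contribution of each $B_{jj}$ to $(A_1^{(0)})_{kk}$ vanishes without any hypothesis. The average of $\gamma_i$ itself equals its mean, which by assumption is zero; therefore each $A_{jj}$ diagonal entry also contributes $0$ to the time average. Summing, $(A_1^{(0)})_{kk}=0$ for $k=1,2,3,4$, and applying \Cref{lemma:diagonal_entries_of_F1} yields $(F_1)_{kk}=0$ for all $k$, so $F_1$ is off-diagonal.

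There is no serious obstacle here; the argument is essentially bookkeeping. The only mild point requiring care is noticing that the $B$-blocks (arising from $W_3$) automatically average to zero by periodicity, so that the hypothesis on $\gamma_i$ is needed only to kill the $A$-blocks. Once this observation is made, the conclusion is immediate from the explicit formulas and \Cref{lemma:diagonal_entries_of_F1}.
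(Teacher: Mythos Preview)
Your approach is essentially identical to the paper's: both reduce the claim to showing $(A_1^{(0)})_{kk}=0$ via \Cref{lemma:diagonal_entries_of_F1} (equivalently, Theorem~\ref{First_order_Floquet_matrix_and_Lyapunov_transformation}), then split the diagonal entries of $A_1$ into the $A$-part (linear in $\gamma_i$, killed by the zero-mean hypothesis) and the $B$-part (linear in $\ddot\gamma_i$, killed automatically by periodicity). The one omission is that you only treat the $\kappa$-modulation contribution to $A_1$; in the general setting of the lemma both $\rho$ and $\kappa$ may be modulated, and the paper's proof also notes that the $\rho$-contribution from \eqref{eq:A1_rho_modulation} has identically zero diagonal blocks, so no hypothesis on $\eta_i$ is needed. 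This is a one-line addition and does not affect your argument.
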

        Here, it is important to note that the first order Floquet exponent matrix will be off-diagonal even if $1/\rho_i$ for $i=1,2$ is perturbed by a constant.
        \begin{proof}
            Theorem \ref{First_order_Floquet_matrix_and_Lyapunov_transformation} states that the diagonal entries of $F_1$ are given by the diagonal entries of $A_1^{(0)}$. In the setting of Theorem \ref{thm:EriksMasterEquation} and assuming that the number of resonators is equal to two, it follows that diagonal entries of the first order coefficient matrix $A_1$ corresponding to the perturbation of $1/\rho_i$ for $i = 1,2$ are zero, as may be deduced from  \eqref{eq:A1_rho_modulation}. The first order coefficient matrix $A_1$ corresponding to the modulation of $1/\kappa_i$ is not always off-diagonal, but, using the notation of \eqref{eq:A1+B1_kappa_modulation}, its diagonal entries are given by
            \begin{align*}
                (A_{ii})_{jj} + (B_{ii})_{jj}.
            \end{align*}
            Considering \eqref{eq:A1_kappa_modulation}, it follows that the diagonal entries are linear in 
            \begin{align*}
                \gamma_1(\alpha+C_{11}-C_{22}) &+ \gamma_2(\alpha-C_{11}+C_{22})\\
                &\text{or }\\
                \gamma_2(\alpha+C_{11}-C_{22}) &+ \gamma_1(\alpha-C_{11}+C_{22}),
            \end{align*}
            and thus it follows that $A_{ii}$ has no constant component whenever $\gamma_i$ for $i=1,2$ has no constant component. Considering $B_{11}$ and $B_{22}$, it follows from  \eqref{eq:B1_kappa_modulation}, that the diagonal entries of $B_{11}$ and $B_{22}$ are linear in 
            \begin{align*}
                \left(C_{22} (\alpha + C_{11}-C_{22})-2 \abs{C_{12}}^2\right)\frac{d^2\gamma_1}{dt^2}(t)&+\left(C_{11} (\alpha-(C_{11}-C_{22}))-2 \abs{C_{12}}^2\right)\frac{d^2\gamma_2}{dt^2}(t) \\
                &\text{and }\\
                \left(C_{22}(\alpha-(C_{11}-C_{22}))+2 \abs{C_{12}}^2\right)\frac{d^2\gamma_1}{dt^2}(t)&+\left(C_{11} (\alpha+C_{11}-C_{22})+2 \abs{C_{12}}^2\right)\frac{d^2\gamma_2}{dt^2}(t),
            \end{align*}
            respectively.
            Since only the second order derivatives of $\gamma_i$ for $i=1,2$ appear, it follows that the constant component of the diagonal entries of $B_{11}$ and $B_{22}$ are always zero. Hence, the first order Floquet exponent matrix $F_1$ is off-diagonal.
        \end{proof}
        This result, Lemma \ref{lem:single_Floquet_exponents_are_perturbed_quadratically} and its proof immediately imply the following corollary. It states that generically, if the number of resonators is equal to two, if they are equally sized and if the assumptions of Theorem \ref{thm:EriksMasterEquation} are fulfilled, then the constant order Floquet exponents are perturbed quadratically in $\epsilon$.

        \begin{cor}[Simple Floquet exponents are perturbed quadratically]
            In the setting of Theorem \ref{thm:EriksMasterEquation} and supposing that the number of resonators is equal to two, it holds that if the perturbation of $1/\kappa_i$ has mean zero over one modulation period and if the constant order Floquet exponents are distinct, then the constant order Floquet exponents are perturbed quadratically in $\epsilon$.            
        \end{cor}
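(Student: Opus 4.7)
The plan is simply to chain the preceding off-diagonality lemma with the general non-degenerate eigenvalue perturbation result, specialized to the dimer. Fix a simple constant order Floquet exponent $(F_0)_{kk}$ of the dimer system $dx/dt=A_\epsilon(t)x$ produced by Theorem~\ref{thm:EriksMasterEquation}. Apply Corollary~\ref{cor:eigenvalue_perturbation_for_single_eigenvalue} to $F_\epsilon=F_0+\epsilon F_1+\epsilon^2 F_2+O(\epsilon^3)$ to obtain the asymptotic expansion $f_k(\epsilon)=(F_0)_{kk}+\epsilon(F_1)_{kk}+O(\epsilon^2)$ for the eigenvalue associated with index $k$. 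The corollary is available because the simplicity hypothesis in the statement rules out any resonance between the entries of $F_0$, so the effective Hamiltonian is one-dimensional. It then remains only to show $(F_1)_{kk}=0$, which is exactly the content of the preceding lemma and whose proof transfers verbatim.

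For the vanishing of $(F_1)_{kk}$, invoke Lemma~\ref{lemma:diagonal_entries_of_F1}, which identifies $(F_1)_{kk}$ with the diagonal of the Fourier zero-mode $A_1^{(0)}$. In the density-modulated case the matrix $A_1$ of \eqref{eq:A1_rho_modulation} has zero diagonal at every time $t$, so its zero Fourier mode has zero diagonal trivially. In the bulk-modulus case one decomposes $A_1$ as in \eqref{eq:A1+B1_kappa_modulation}: the diagonal of the $A_{ii}$-blocks of \eqref{eq:A1_kappa_modulation} is a linear combination of $\gamma_1$ and $\gamma_2$, whose mean vanishes by the mean-zero hypothesis on $1/\kappa_i$, while the diagonal of the $B_{ii}$-blocks of \eqref{eq:B1_kappa_modulation} is a linear combination of second derivatives $d^2\gamma_i/dt^2$, which automatically have zero mean over one period. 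Summing, $(A_1^{(0)})_{kk}=0$, hence $(F_1)_{kk}=0$, and the linear term in $f_k(\epsilon)$ drops out.

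The statement of the corollary then follows immediately: each simple $(F_0)_{kk}$ satisfies $f_k(\epsilon)=(F_0)_{kk}+O(\epsilon^2)$, which is precisely what it means to be perturbed quadratically. I do not expect any real obstacle: the entire argument is a direct composition of the preceding lemma (whose proof is already given) with Lemma~\ref{lem:single_Floquet_exponents_are_perturbed_quadratically}, and the authors themselves state that the corollary follows ``immediately'' from these. The only point that requires any care is to verify that the simplicity of the constant order Floquet exponents in the dimer setting is exactly the hypothesis needed to apply Corollary~\ref{cor:eigenvalue_perturbation_for_single_eigenvalue}; here simplicity is inherited directly from the diagonal structure of $A_0$ in \eqref{eq:A0_2Res} together with the explicit choice of $F_0$ in Theorem~\ref{Inductive_Identity_for_Lyapunov-Floquet_decomposition}.
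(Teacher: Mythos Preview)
Your proposal is correct and follows essentially the same route the paper indicates: combine the preceding off-diagonality lemma (giving $(F_1)_{kk}=0$) with the proof of Lemma~\ref{lem:single_Floquet_exponents_are_perturbed_quadratically} via Corollary~\ref{cor:eigenvalue_perturbation_for_single_eigenvalue}. The paper does not spell this out but states the corollary follows immediately from those two ingredients, which is exactly what you have unpacked.
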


        In order to achieve linear perturbation of the constant order Floquet exponents, it is necessary to produce \emph{double} constant order Floquet exponents. This can be achieved, as already explained in Subsection \ref{F1_when_F0_has_double_eigenvalue}, via folding of the constant order Floquet exponents with particular choices of the modulation frequency $\Omega$. The following lemma classifies different possible double constant order Floquet exponents in the dimer setting of Theorem \ref{thm:EriksMasterEquation}.

        \begin{lemma}[Double constant order Floquet exponents]\label{lem:double_constant_order_Floquet_exponent}
            In the setting of Theorem \ref{thm:EriksMasterEquation}, supposing that the number of resonators is equal to two and that they are equally sized, and with the notation of  \eqref{eq:A0_2Res}, if
            \begin{equation} \label{fact5.5} \frac{\sqrt{C_{11} + C_{22} + \alpha}-\sqrt{C_{11} + C_{22} - \alpha}}{\sqrt{C_{11} + C_{22} + \alpha}+\sqrt{C_{11} + C_{22} - \alpha}} \not\in \mathbb{Q},\end{equation}
            then the following are \emph{exclusive} and are the only possible double constant order Floquet exponent constellations:
            \begin{enumerate}[(i)]
                \item The constant order Floquet exponents $(F_0)_{11}$ and $(F_0)_{22}$ coincide;
                \item The constant order Floquet exponents $(F_0)_{33}$ and $(F_0)_{44}$ coincide;
                \item  The constant order Floquet exponents $(F_0)_{11}$ and $(F_0)_{33}$ coincide and (automatically) the constant order Floquet exponents $(F_0)_{22}$ and $(F_0)_{44}$ also coincide. In this case, the resonant frequencies of both double constant order Floquet exponents are equal;
                \item  The constant order Floquet exponents $(F_0)_{11}$ and $(F_0)_{44}$ coincide and (automatically) the constant order Floquet exponents $(F_0)_{22}$ and $(F_0)_{33}$ also coincide. In this case, the resonant frequencies of both double constant order Floquet exponents are equal.
            \end{enumerate} 
        \end{lemma}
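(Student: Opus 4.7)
The plan is to recast the question in terms of the two positive numbers $\beta_{\pm} := \sqrt{C_{11}+C_{22}\pm\alpha}$ (positivity of $\beta_-$ follows from the positive-definiteness $C_{11}C_{22}>|C_{12}|^2$ of the capacitance matrix) together with $\mu := \sqrt{\delta/(2|D_1|)}$ and $\Omega := 2\pi/T$. With this notation, the four diagonal entries of $A_0$ in \eqref{eq:A0_2Res} read $-\i\mu\beta_+,\ \i\mu\beta_+,\ -\i\mu\beta_-,\ \i\mu\beta_-$, all lying sign-symmetrically on the imaginary axis. Two such entries induce the same constant order Floquet exponent precisely when their difference lies in $\i\Omega\mathbb{Z}$. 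Enumerating the six possible pairings of the four diagonal entries collapses to exactly four independent coincidence conditions: $2\mu\beta_+\in\Omega\mathbb{Z}$ (forcing case (i)), $2\mu\beta_-\in\Omega\mathbb{Z}$ (case (ii)), $\mu(\beta_+-\beta_-)\in\Omega\mathbb{Z}$ (case (iii)), and $\mu(\beta_++\beta_-)\in\Omega\mathbb{Z}$ (case (iv)). The last two conditions automatically yield two simultaneous coincidences each, because of the sign symmetry $(A_0)_{22}=-(A_0)_{11}$ and $(A_0)_{44}=-(A_0)_{33}$; this explains the ``and'' parts of (iii) and (iv). Since every double Floquet exponent must arise from one of these four conditions, this gives the ``only possible'' part.

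Exclusivity then follows from a single linear-algebra observation. If any two of the four conditions hold simultaneously, their $\mathbb{Z}$-linear span in $(\mu\beta_+,\mu\beta_-)$-coordinates has rank two, and so both $\mu\beta_+/\Omega$ and $\mu\beta_-/\Omega$ lie in $\mathbb{Q}$. In particular $\beta_+/\beta_-\in\mathbb{Q}$, and since the M\"obius map $q\mapsto(q-1)/(q+1)$ restricts to a bijection of $\mathbb{Q}$, this is equivalent to $(\beta_+-\beta_-)/(\beta_++\beta_-)\in\mathbb{Q}$, contradicting hypothesis \eqref{fact5.5}. Hence at most one of the four cases can occur.

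Finally, I would check that the two simultaneous double Floquet exponents in cases (iii) and (iv) share the same resonant frequencies. Whenever $(F_0)_{kk}=(F_0)_{ll}$, the folding-number difference satisfies $n_k-n_l = ((A_0)_{kk}-(A_0)_{ll})/(\i\Omega)$. In case (iii) the identity $(A_0)_{11}-(A_0)_{33} = \i\mu(\beta_--\beta_+) = -((A_0)_{22}-(A_0)_{44})$ yields $n_1-n_3 = -(n_2-n_4)$, so that both resonant frequency sets $\{\pm(n_1-n_3)\}$ and $\{\pm(n_2-n_4)\}$ coincide; the parallel identity $(A_0)_{11}-(A_0)_{44} = -((A_0)_{22}-(A_0)_{33})$ settles case (iv). The main obstacle is purely bookkeeping: verifying that the six diagonal pairings really collapse to exactly the four independent conditions listed, and running the exclusivity argument through all $\binom{4}{2}=6$ pairwise overlaps of conditions without sign errors.
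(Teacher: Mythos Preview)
Your proof is correct and follows the same approach as the paper's own argument, which is a two-sentence sketch stating that the cases are read off from the formula for $A_0$ and that exclusiveness follows from the irrationality hypothesis \eqref{fact5.5}. You have simply supplied the details the paper omits: the explicit enumeration of the six pairings, the rank-two argument for exclusivity, and the verification that the resonant frequency sets coincide in cases (iii) and (iv).
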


        \begin{proof}
            This can be read of the formula \eqref{eq:A0_2Res} for the constant order coefficient matrix $A_0$. Exclusiveness is due to the fact that \eqref{fact5.5} holds, in which case all diagonal entries of $A_0$ cannot be folded onto each other.
        \end{proof}

        The phenomenon of simultaneous band crossings can be observed in Figure \ref{fig:f0_with_f1_correction}. There the crossings of the first and second constant order Floquet exponents are isolated, as are the crossings of the third and fourth constant order Floquet exponents. However, the crossings of the first and third constant order Floquet exponents are simultaneous with the crossing of the second and fourth constant order Floquet exponents, as stated in Lemma \ref{lem:double_constant_order_Floquet_exponent}.


        The following subsection elucidates some further behaviors of the constant order Floquet exponent perturbation in the setting where only $1/\rho_i$ for $i=1,2$ is modulated and $1/\kappa_i$ for $i=1,2$ is not modulated. There it will become apparent that certain double Floquet exponents cannot be perturbed linearly when $1/\kappa_i$ is not modulated. This is due to the zero blocks on the diagonal of $A_1$ associated to the modulation of $1/\rho_i$ for $i=1,2$; see  \eqref{eq:A1_rho_modulation}.
        This is different in the setting where $1/\kappa_i$ for $i=1,2$ is modulated. The associated first order coefficient matrix $A_1$ has no zero blocks and thus linear perturbation of all double Floquet exponents can be achieved through a modulation of $1/\kappa_i$ for $i =1,2$. This will be the subject of Subsection \ref{subsec:Floquet_exp_pert_kappa}.

    \subsubsection{Floquet exponent perturbation for modulated densities}
     In this subsection, only the densities are modulated (the bulk moduli are not). In the dimer setting, this leads to the following phenomenon.
            
            \begin{lemma}\label{lem:no_quad_pert_for_non-paired_FloExp_rho-setting}
                In the setting of Theorem \ref{thm:EriksMasterEquation}, supposing that the number of resonators is equal to two, that they are equally sized, and with the notation of  \eqref{eq:A0_2Res}, it holds that, when only $1/\rho_i$ with $i=1,2$ is modulated and $1/\kappa_i$ for $i=1,2$ is not modulated, then a double Floquet exponent of the form $(F_0)_{11} = (F_0)_{22}$ or of the form $(F_0)_{33} = (F_0)_{44}$ is never perturbed linearly, but at least quadratically.
            \end{lemma}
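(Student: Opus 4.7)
The plan is to apply \Cref{lemma:linear_perturbation_of_double_eigenvalue} directly to the double Floquet exponents $(F_0)_{11} = (F_0)_{22}$ (and analogously $(F_0)_{33} = (F_0)_{44}$) and show that every relevant entry of the first order effective Hamiltonian vanishes.

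First, I would examine the explicit formula \eqref{eq:A1_rho_modulation} for the first order coefficient matrix $A_1$ in the density-only modulation setting. The key structural observation is that $A_1$ factorizes as a scalar time-dependent function $(\eta_1(t) - \eta_2(t))$ multiplied by a constant $4\times 4$ matrix $M$, where $M$ has the block form
\begin{equation*}
    M = \begin{pmatrix} 0 & * \\ * & 0 \end{pmatrix}
\end{equation*}
with $2\times 2$ zero blocks on the diagonal. Consequently, expanding $\eta_1 - \eta_2$ into its Fourier series, every Fourier coefficient $A_1^{(m)}$ inherits exactly the same zero-block structure. In particular, for all $m \in \mathbb{Z}$ and all indices $i,j \in \{1,2\}$ (respectively $i,j \in \{3,4\}$),
\begin{equation*}
    (A_1^{(m)})_{ij} = 0.
\end{equation*}

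Next I would invoke \Cref{lemma:linear_perturbation_of_double_eigenvalue}. In the case $(F_0)_{11} = (F_0)_{22}$, that lemma shows that the first order effective Hamiltonian associated to this double eigenvalue is
\begin{equation*}
    \begin{pmatrix} (F_0)_{11} & 0 \\ 0 & (F_0)_{22} \end{pmatrix} + \epsilon \begin{pmatrix} (A_1^{(0)})_{11} & (A_1^{(n_1-n_2)})_{12} \\ (A_1^{(n_2-n_1)})_{21} & (A_1^{(0)})_{22} \end{pmatrix}.
\end{equation*}
By the observation above, all four entries of the $\epsilon$-coefficient matrix vanish, so the first order effective Hamiltonian coincides with its leading order. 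Hence the two perturbed Floquet exponents agree with $(F_0)_{11}$ up to $O(\epsilon^2)$, i.e.\ there is no linear perturbation. The argument for $(F_0)_{33} = (F_0)_{44}$ is identical, using the lower-right $2\times 2$ zero block of $M$.

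Finally, to conclude that the perturbation is at least quadratic I would simply note that the formulas in Theorems \ref{First_order_Floquet_matrix_and_Lyapunov_transformation} and \ref{fisttheorem} yield nonvanishing entries of $F_2$ in general (this is the role of the block $A_2^{11}$ and $A_2^{22}$ computed in the excerpt, together with the $F_1 G_i F_1$ contributions), so the Floquet exponents indeed start deviating from $(F_0)_{11}$ at order $\epsilon^2$. No obstacle arises beyond recognizing the block-off-diagonal structure of $A_1$; the main point is the structural fact that density-only modulation couples the $\{1,2\}$ block to the $\{3,4\}$ block and nothing within each block, which is precisely why pairings of type (i) and (ii) in \Cref{lem:double_constant_order_Floquet_exponent} are immune to linear splitting.
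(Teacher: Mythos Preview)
Your proposal is correct and follows essentially the same route as the paper: observe from \eqref{eq:A1_rho_modulation} that the diagonal $2\times 2$ blocks of $A_1$ (and hence of every Fourier coefficient $A_1^{(m)}$) vanish, then apply \Cref{lemma:linear_perturbation_of_double_eigenvalue} to conclude the first-order effective Hamiltonian for the pairs $\{1,2\}$ and $\{3,4\}$ has no $\epsilon$-contribution. Your final paragraph is unnecessary---``at least quadratically'' here just means the perturbation is $O(\epsilon^2)$, which is immediate once the linear term vanishes; no argument about nontriviality of $F_2$ is required.
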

            \begin{proof}
                Recalling the structure of the first order coefficient matrix $A_1$ in the setting of modulated $1/\rho_i$ and non-modulated $1/\kappa_i$ from \eqref{eq:A1_rho_modulation}: the diagonal blocks of $A_1$ are zero, and thus $(A_1)_{12}=(A_1)_{21}=(A_1)_{34}=(A_1)_{43}=0$. By Lemma \ref{lemma:linear_perturbation_of_double_eigenvalue}, it holds that precisely those entries of $A_1$ determine the linear perturbation of a double constant order Floquet exponent of the form $(F_0)_{11} = (F_0)_{22}$ or of the form $(F_0)_{33} = (F_0)_{44}$. It thus follows that double constant order Floquet exponents of that particular form are never perturbed linearly, when $1/\rho_i$ for $i=1,2$ is modulated only.
            \end{proof}

            For the double constant order Floquet exponents of the form $(F_0)_{11}=(F_0)_{44}$  and $(F_0)_{22}=(F_0)_{33}$ or of the form $(F_0)_{11}=(F_0)_{33}$  and $(F_0)_{22}=(F_0)_{44}$, linear perturbation is achievable, since the corresponding entries of the first order coefficient matrix $A_1$ are non-zero, when $1/\rho_i$ for $i=1,2$ is modulated with the resonant frequencies of the corresponding double constant order Floquet exponent. In this setting, the same procedure for producing linear perturbation as proposed in Subsection \ref{F1_when_F0_has_double_eigenvalue} applies. Since in both settings the resonant frequencies of the two different double constant order Floquet exponents are equal (see Lemma \ref{lem:double_constant_order_Floquet_exponent}), it follows that both double constant order Floquet exponents are always perturbed linearly and simultaneously and linear perturbation of only one of the double constant order Floquet exponents cannot be achieved. For example, in the setting where $(F_0)_{11}=(F_0)_{44}$  and $(F_0)_{22}=(F_0)_{33}$, and if $(F_0)_{11}=(F_0)_{44}$ is perturbed linearly, then so is $(F_0)_{22}=(F_0)_{33}$, since $(F_0)_{11}=(F_0)_{44}$  and $(F_0)_{22}=(F_0)_{33}$ share the same resonant frequencies.
            This phenomenon can be observed in Figure \ref{fig:rho_modulation}, where $1/\rho_i$ is modulated and $1/\kappa_i$ is not modulated for $i=1,2$. There, one can see in Subfigure \ref{fig:f0_plus_eps2_f2_with_f1_correction} that the double Floquet exponent $(F_0)_{11}=(F_0)_{33}$  and $(F_0)_{22}=(F_0)_{44}$ are perturbed symmetrically and linearly. In contrast, due to the zero-blocks of $A_1$, all other double constant order Floquet exponents are perturbed quadratically.

                \begin{figure}
                    \centering
                        \includegraphics[width=\textwidth]{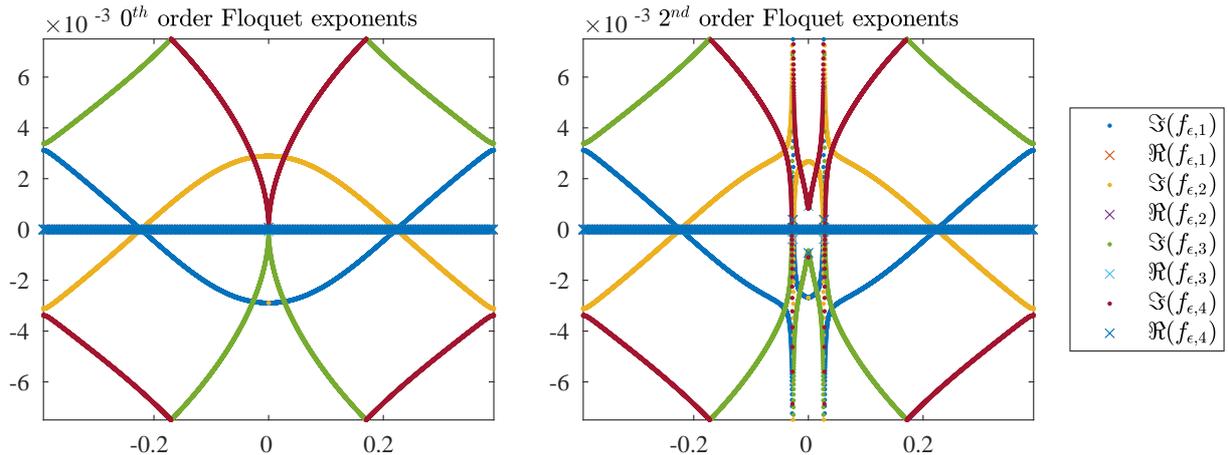}
                        \caption{Displayed are the constant order and second order Floquet exponents in the case of a dimer of subwavelength resonators. On the left figure, one can see that bands 1 and 3, 2 and 4 cross simultaneously, whereas bands 1 and 2 and bands 2 and 3 cross isolatedly. On the right figure, the Floquet exponents up to second order with $\epsilon=0.1$ are displayed. One can see the pole at the band crossing (1,2 and 3,4). This corresponds to a linear perturbation of the Floquet exponents. The pole is due to the non-uniformity of the asymptotic analysis.}
                        \label{fig:f0_with_f1_correction}
                        \label{fig:f0_plus_eps2_f2_with_f1_correction}
                       \label{fig:rho_modulation}
                \end{figure}

        
        

    \subsubsection{Floquet exponent perturbation for modulated bulk moduli}\label{subsec:Floquet_exp_pert_kappa}
        While in the setting where only the densities were modulated, the behavior of the linear perturbation of double constant order Floquet exponents was rather restricted, the behavior when the bulk moduli are modulated is very wide due to the absence of zero entries in the corresponding first order coefficient matrix $A_1$. That is, when modulating $1/\kappa_i$ for $i = 1,2$ at the resonance frequencies of a double constant order Floquet exponent, linear perturbation can generically be achieved. When the double constant order Floquet exponents occur in pairs of the type $(F_0)_{11}=(F_0)_{33}$  and $(F_0)_{22}=(F_0)_{44}$ or of the type $(F_0)_{11}=(F_0)_{44}$  and $(F_0)_{22}=(F_0)_{33}$, the same behavior is present as in the case where only $1/\rho_i$ was modulated. Namely, both double constant order Floquet exponents cannot be perturbed linearly and separately, but their linear perturbation will always occur simultaneously. This also follows from Lemma \ref{conjugate}, which states that Floquet exponents of an ODE with real-valued coefficients always occur in conjugate pairs.

\subsection{Classification of exceptional points for a dimer of subwavelength resonators}
    This section is devoted to the classification of first order asymptotic exceptional points in the setting of Theorem \ref{thm:EriksMasterEquation} for a dimer of subwavelength resonators. In order to classify those exceptional points, the criterion stated in Theorem \ref{thm:criterion_for_first_order_exceptional_point} will be repeatedly applied, particularly its criterion on the first order coefficient matrix $A_1$. Furthermore, the same approach as in the subsection on exceptional points in the classical harmonic oscillator setting of Section \ref{sec:exceptional_points} will be applied.

    It will become apparent that in the setting of Lemma \ref{lem:double_constant_order_Floquet_exponent}, exceptional points through a modulation of either $\rho$ or $\kappa$ are only possible when the associated capacitance matrix $C$ is diagonal, that is, when $C_{12}=0$. The behavior is fundamentally different when both $\rho$ and $\kappa$ are modulated simultaneously. Indeed, the condition on the capacitance matrix is no longer necessary and an uncountable number of exceptional points are possible.
    
\subsubsection{Exceptional points when only the densities are modulated}
   
   When either the densities or the bulk moduli are modulated, then exceptional points are only possible, when the capacitance matrix $C$  is diagonal and has distinct diagonal entries. However, when the densities and the bulk moduli  are modulated simultaneously, (asymptotic) exceptional points without any conditions on $C$ can be created. That is, given any dimer of subwavelength resonators, it is possible to create asymptotic exceptional points.

    \begin{lemma}\label{lem:rho-modulation_exceptional_pt_C12}
        In the setting of Theorem \ref{thm:EriksMasterEquation} and Lemma \ref{lem:double_constant_order_Floquet_exponent}, suppose that only $1/\rho_i$ with $i=1,2$ is modulated and $1/\kappa_i$ for $i=1,2$ is not modulated. Then,  it holds that if the system ${d\Phi}/{dt} = A_\epsilon(t) \Phi$ is posed at a first order asymptotic exceptional point, then $C_{12}=0$.
    \end{lemma}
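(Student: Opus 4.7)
The plan is to apply the criterion of Theorem~\ref{thm:criterion_for_first_order_exceptional_point} to each of the four double constant order Floquet exponent configurations listed in Lemma~\ref{lem:double_constant_order_Floquet_exponent}, using the explicit form of $A_1$ in \eqref{eq:A1_rho_modulation}, and to show that only $C_{12}=0$ is compatible with the ``exclusive-or'' vanishing condition.

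The starting observation is that in the density-only modulation setting one has $A_1(t)=(\eta_1(t)-\eta_2(t))\,M$, where $M\in\CC^{4\times 4}$ is a constant matrix whose two diagonal $2\times 2$ blocks (rows and columns in $\{1,2\}$ and in $\{3,4\}$) vanish identically, and whose two off-diagonal $2\times 2$ blocks are rank one with all entries proportional to either
\[
X:=-\frac{C_{11}-C_{22}-\alpha}{\sqrt{C_{11}+C_{22}-\alpha}} \quad \text{or} \quad Y:=-\frac{C_{11}-C_{22}+\alpha}{\sqrt{C_{11}+C_{22}+\alpha}}.
\]
In particular, every Fourier coefficient $(A_1^{(m)})_{kl}$ equals $M_{kl}$ times the Fourier coefficient $c_m$ of $\eta_1-\eta_2$; and since $\eta_1,\eta_2$ are real-valued (they modulate physical densities), one has $c_{-m}=\overline{c_m}$, so $c_m$ and $c_{-m}$ vanish together.

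For cases (i) and (ii) of Lemma~\ref{lem:double_constant_order_Floquet_exponent} the coinciding indices $i,j$ lie in the same pair $\{1,2\}$ or $\{3,4\}$, so $M_{ij}=M_{ji}=0$ and both $(A_1^{(n_i-n_j)})_{ij}$ and $(A_1^{(n_j-n_i)})_{ji}$ vanish at every frequency. The exclusive-or criterion of Theorem~\ref{thm:criterion_for_first_order_exceptional_point} thus fails, ruling out a first order asymptotic exceptional point in these configurations (consistent with Lemma~\ref{lem:no_quad_pert_for_non-paired_FloExp_rho-setting}). For cases (iii) and (iv), the two relevant entries are
\[
(A_1^{(n_i-n_j)})_{ij}=\pm X\,c_{n_i-n_j},\qquad (A_1^{(n_j-n_i)})_{ji}=\pm Y\,c_{n_j-n_i},
\]
and by the reality of $\eta_1-\eta_2$ the two Fourier factors vanish together. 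The required exclusivity of the vanishing must therefore come from the geometric factors: exactly one of $X,Y$ must vanish, and $\alpha=\pm(C_{11}-C_{22})$ inserted into $\alpha^2=(C_{11}-C_{22})^2+4|C_{12}|^2$ yields $|C_{12}|^2=0$, so $C_{12}=0$.

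The main subtlety, and the only step requiring care, is the ``exclusive'' part of the condition: one must rule out that $X$ and $Y$ vanish simultaneously, which would make the restricted first-order block identically zero and hence trivially diagonalizable rather than non-diagonalizable. Simultaneous vanishing forces both $\alpha=C_{11}-C_{22}$ and $\alpha=C_{22}-C_{11}$, i.e.\ $\alpha=0$; but then the ratio in \eqref{fact5.5} equals $0\in\mathbb{Q}$, contradicting the standing hypothesis of Lemma~\ref{lem:double_constant_order_Floquet_exponent}. Hence at most one of $X,Y$ vanishes, and $C_{12}=0$ is necessary whenever the system is posed at a first order asymptotic exceptional point.
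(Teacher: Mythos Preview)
Your argument is correct and follows essentially the same route as the paper: rule out the diagonal-block cases (i) and (ii) of Lemma~\ref{lem:double_constant_order_Floquet_exponent} using the vanishing $2\times 2$ blocks of $A_1$, then in cases (iii) and (iv) apply the criterion of Theorem~\ref{thm:criterion_for_first_order_exceptional_point} to force one of the geometric factors $X,Y$ to vanish, whence $\alpha=\pm(C_{11}-C_{22})$ and $C_{12}=0$. Your write-up is in fact more careful than the paper's own proof on two points the paper leaves implicit: you explain, via the reality of $\eta_1-\eta_2$, why the vanishing in Theorem~\ref{thm:criterion_for_first_order_exceptional_point} must occur at the geometric factor rather than at the Fourier coefficient, and you explicitly exclude the degenerate case $X=Y=0$ (i.e.\ $\alpha=0$) using the irrationality hypothesis \eqref{fact5.5}.
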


    \begin{proof} 
        Suppose that the system ${d\Phi}/{dt} = A_\epsilon(t) \Phi$ is posed at a first order asymptotic exceptional point.
        Due to the zero blocks on the diagonal of the first order coefficient matrix $A_1$ (equation \eqref{eq:A1_rho_modulation}), it follows that the exceptional point is due to a double constant order Floquet exponent of the form $(F_0)_{11}=(F_0)_{33}$  and $(F_0)_{22}=(F_0)_{44}$ or of the form $(F_0)_{11}=(F_0)_{44}$  and $(F_0)_{22}=(F_0)_{33}$. Thus, by Theorem \ref{thm:criterion_for_first_order_exceptional_point} at least one of the following conditions must hold:
        \begin{enumerate}[(i)]
            \item Either $\ds -\frac{C_{11}-C_{22}- \alpha}{ \sqrt{C_{11}+C_{22}-\alpha}} = 0$ or $\ds -\frac{C_{11}-C_{22}+\alpha}{ \sqrt{ C_{11}+C_{22}+\alpha}} = 0$;\label{list:cond1}
            \item Either $\,\,\,\,\,\ds \frac{C_{11}-C_{22}- \alpha}{ \sqrt{C_{11}+C_{22}-\alpha}} = 0$ or $\ds -\frac{C_{11}-C_{22}+\alpha}{ \sqrt{ C_{11}+C_{22}+\alpha}} = 0$;
            \label{list:cond2}
            \item Either $\ds -\frac{C_{11}-C_{22}- \alpha}{ \sqrt{C_{11}+C_{22}-\alpha}} = 0$ or $\,\,\,\,\,\ds \frac{C_{11}-C_{22}+\alpha}{ \sqrt{ C_{11}+C_{22}+\alpha}} = 0$;\label{list:cond3}
            \item Either $\,\,\,\,\,\ds \frac{C_{11}-C_{22}- \alpha}{ \sqrt{C_{11}+C_{22}-\alpha}} = 0$ or $\,\,\,\,\,\ds \frac{C_{11}-C_{22}+\alpha}{ \sqrt{ C_{11}+C_{22}+\alpha}} = 0$.\label{list:cond4}
        \end{enumerate}
        Conditions \eqref{list:cond1} to \eqref{list:cond4} are equivalent. They hold if and only if either $C_{11}-C_{22}- \alpha = 0 $ or $C_{11}-C_{22}+\alpha =0$. Since $C_{11}$ and $C_{22}$ are always real and recalling that $\alpha = \sqrt{(C_{11} - C_{22})^2 + 4\abs{C_{12}}^2}$, it follows that at least one of the conditions holds if and only if
            $$(C_{11}-C_{22})^2 =  (C_{11} - C_{22})^2 + 4\abs{C_{12}}^2,$$
        and thus one of the conditions holds if and only if $C_{12}=0$.
    \end{proof}

    \begin{lemma}\label{lem:rho_exceptional_pt_properties}
        In the setting of Theorem \ref{thm:EriksMasterEquation} and Lemma \ref{lem:double_constant_order_Floquet_exponent}, suppose that only $1/\rho_i$ with $i=1,2$ is modulated and that $1/\kappa_i$ for $i=1,2$ is not modulated. If the system ${d \Phi}/{dt} = A_\epsilon(t) \Phi$ is posed at a first order asymptotic exceptional point, then the following results hold:\begin{enumerate}[(i)]
            \item The capacitance matrix is given by $$C = \begin{pmatrix}
                C_{11} &0\\
                0 &C_{22}
            \end{pmatrix}.$$\label{cor:statement1}
            \item The constant order coefficient matrix $A_0$ takes the form: $$A_0 = \begin{cases}
                \i\sqrt{\frac{\delta}{|D_1|}}\left(\begin{smallmatrix}
                -\sqrt{C_{11}}\\
                &  \sqrt{C_{11}}\\
                && -\sqrt{C_{22}}\\
                &&&\sqrt{C_{22}}
                \end{smallmatrix}\right) &\text{if } C_{11} \geq C_{22},\\ 
                \nm
                \i\sqrt{\frac{\delta}{|D_1|}}\left(\begin{smallmatrix}
                    -\sqrt{C_{22}}\\
                    &  \sqrt{C_{22}}\\
                    && -\sqrt{C_{11}}\\
                    &&&\sqrt{C_{11}}
                    \end{smallmatrix}\right) &\text{otherwise}.\end{cases}$$ \label{cor:statement2}
            \end{enumerate}
    \end{lemma}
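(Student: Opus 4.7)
The proof will follow almost immediately from Lemma \ref{lem:rho-modulation_exceptional_pt_C12} together with a direct substitution into the formula \eqref{eq:A0_2Res} for $A_0$. Statement (i) is nothing more than the conclusion $C_{12}=0$ of the previous lemma: since $C_{12}$ is the only off-diagonal entry of the $2\times 2$ Hermitian capacitance matrix $C$, the condition $C_{12}=0$ forces $C$ to be diagonal.

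For statement (ii), I would plug $C_{12}=0$ into the definition $\alpha=\sqrt{(C_{11}-C_{22})^2+4|C_{12}|^2}$, which collapses to $\alpha=|C_{11}-C_{22}|$. Then I would split into the two cases $C_{11}\ge C_{22}$ and $C_{11}<C_{22}$. In the first case $\alpha=C_{11}-C_{22}$, so
\[
C_{11}+C_{22}+\alpha=2C_{11},\qquad C_{11}+C_{22}-\alpha=2C_{22},
\]
while in the second case the two simplifications swap. Substituting into \eqref{eq:A0_2Res} and pulling the factor $\sqrt{2}$ out of each square root cancels the $\sqrt{2}$ in the denominator $\sqrt{2|D_1|}$, which yields exactly the two matrix expressions claimed in (ii).

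I do not expect any genuine obstacle here; the lemma is essentially a bookkeeping corollary of Lemma \ref{lem:rho-modulation_exceptional_pt_C12} and the explicit formula \eqref{eq:A0_2Res}. The only mild subtlety is to remember that the diagonal entries of $A_0$ appear in the prescribed order coming from the diagonalization leading to \eqref{eq:A0_2Res}, so that the slots labelled $(1,1),(2,2)$ carry the eigenvalue $\pm\sqrt{C_{11}+C_{22}+\alpha}$ and slots $(3,3),(4,4)$ carry $\pm\sqrt{C_{11}+C_{22}-\alpha}$; this is what produces the case distinction in (ii) depending on whether it is $C_{11}$ or $C_{22}$ which ends up in the first two diagonal slots.
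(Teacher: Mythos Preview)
Your proposal is correct and follows essentially the same approach as the paper: statement (i) is cited directly from Lemma \ref{lem:rho-modulation_exceptional_pt_C12}, and statement (ii) is obtained by substituting $C_{12}=0$ (hence $\alpha=|C_{11}-C_{22}|$) into \eqref{eq:A0_2Res}. The paper's proof is terser but identical in content.
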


    \begin{proof} The first statement is a direct consequence of Lemma \ref{lem:rho-modulation_exceptional_pt_C12}. Using the formula for the constant order coefficient matrix $A_0$ given by  \eqref{eq:A0_2Res}, the second statement is then a consequence of the first one.
    \end{proof}

    \begin{lemma}
        In the setting of Theorem \ref{thm:EriksMasterEquation} and Lemma \ref{lem:double_constant_order_Floquet_exponent}, suppose that only $1/\rho_i$ with $i=1,2$ is modulated and that $1/\kappa_i$ for $i=1,2$ is not modulated. Then, if the system ${d\Phi}/{dt} = A_\epsilon(t) \Phi$ is posed at a first order asymptotic exceptional point, we have $C_{11} \neq C_{22}$.
    \end{lemma}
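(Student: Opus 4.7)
My plan is to argue by contradiction, combining Lemma \ref{lem:rho-modulation_exceptional_pt_C12} with a direct inspection of the formula \eqref{eq:A1_rho_modulation} for $A_1$. Suppose the system is posed at a first order asymptotic exceptional point but that $C_{11}=C_{22}$. By Lemma \ref{lem:rho-modulation_exceptional_pt_C12} we already know $C_{12}=0$, so $\alpha = \sqrt{(C_{11}-C_{22})^2 + 4|C_{12}|^2} = 0$. I would then substitute $\alpha = 0$ and $C_{11}-C_{22}=0$ into \eqref{eq:A1_rho_modulation}: the two diagonal $2\times 2$ blocks of $A_1$ are already zero by inspection, while every off-diagonal block carries the factor $C_{11}-C_{22}\pm\alpha=0$ in its numerator, with strictly positive denominator $\sqrt{C_{11}+C_{22}\mp\alpha}=\sqrt{2C_{11}}>0$. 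Hence $A_1\equiv 0$ as a matrix-valued function of $t$.

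Once $A_1\equiv 0$, the inductive formulas of Theorem \ref{First_order_Floquet_matrix_and_Lyapunov_transformation} force $F_1=0$, since both the diagonal and off-diagonal entries of $F_1$ are linear in the Fourier coefficients of $A_1$. Consequently $F_0+\epsilon F_1 = F_0$, and $F_0$ is diagonal by the construction in Theorem \ref{Inductive_Identity_for_Lyapunov-Floquet_decomposition}. A diagonal matrix is trivially diagonalizable, so $F_0+\epsilon F_1$ is diagonalizable for every $\epsilon>0$, contradicting Definition \ref{def:aEP} and the criterion of Theorem \ref{thm:criterion_for_first_order_exceptional_point}. This would complete the argument.

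I do not anticipate a genuine obstacle; the only routine check is that the denominators in \eqref{eq:A1_rho_modulation} do not themselves degenerate when $\alpha=0$, which is immediate as long as $C_{11}+C_{22}>0$, a consequence of the positive-definiteness of the capacitance matrix. A slightly slicker alternative would be to observe that $C_{12}=0$ together with $C_{11}=C_{22}$ forces the quantity on the left-hand side of the irrationality hypothesis \eqref{fact5.5} of Lemma \ref{lem:double_constant_order_Floquet_exponent} to vanish; since $0\in\mathbb{Q}$, this already contradicts the standing assumption and gives the conclusion without ever touching $A_1$.
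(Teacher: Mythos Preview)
Your main argument is correct and matches the paper's proof: assume $C_{11}=C_{22}$, use the preceding lemma to get $C_{12}=0$ (the paper leaves this implicit), observe that $A_1\equiv 0$ by \eqref{eq:A1_rho_modulation}, and conclude that no first order exceptional point can arise. Your alternative route via the irrationality hypothesis \eqref{fact5.5} is a valid shortcut the paper does not use; it is slicker but relies on the standing assumption of Lemma~\ref{lem:double_constant_order_Floquet_exponent}, whereas the $A_1\equiv 0$ argument shows more directly why the exceptional-point structure collapses.
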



    \begin{proof} 
    	Suppose on the contrary that $C_{11} = C_{22}$, in other words that the unmodulated system has degenerate Floquet exponents. However, in that case the first order coefficient matrix $A_1$ is identically zero by \eqref{eq:A1_rho_modulation} and thus the system cannot be posed at a first order exceptional point.
    \end{proof}

    \begin{remark} Strictly speaking the formula for $A_2$ does not apply in the case where $C_{11} = C_{22}$ and $C_{12}=0$, since then $\alpha = 0$ and it appears in the denominators of certain coefficients. Nevertheless, analogous formulas are straight-forward to compute in this case.
    \end{remark}

    \begin{lemma}\label{lem:except_pt_rho-setting_F0_cases}
        In the setting of Theorem \ref{thm:EriksMasterEquation} and Lemma \ref{lem:double_constant_order_Floquet_exponent}, suppose that only $1/\rho_i$ with $i=1,2$ is modulated and that $1/\kappa_i$ for $i=1,2$ is not modulated. If the system ${dx}/{dt} = A_\epsilon(t) x$ is posed at an exceptional point, then the constant order Floquet exponent matrix $F_0$ is either of the form $(F_0)_{11}=(F_0)_{44}$  and $(F_0)_{22}=(F_0)_{33}$ or of the form $(F_0)_{11}=(F_0)_{33}$  and $(F_0)_{22}=(F_0)_{44}$.
    \end{lemma}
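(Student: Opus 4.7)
The plan is to combine the classification of possible double constant order Floquet exponent constellations provided by Lemma \ref{lem:double_constant_order_Floquet_exponent} with the criterion for first order exceptional points in Theorem \ref{thm:criterion_for_first_order_exceptional_point}. Since the system is posed at a first order asymptotic exceptional point, Theorem \ref{thm:criterion_for_first_order_exceptional_point} forces $F_0$ to have a double eigenvalue $(F_0)_{ii} = (F_0)_{jj}$ with $i \neq j$, and moreover demands that exactly one of $(A_1^{(n_i-n_j)})_{ij}$ and $(A_1^{(n_j-n_i)})_{ji}$ vanishes.

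Invoking the irrationality hypothesis \eqref{fact5.5} needed to apply Lemma \ref{lem:double_constant_order_Floquet_exponent}, the double-eigenvalue constellation falls into exactly one of the four cases (i)--(iv) of that lemma. My strategy is to rule out case (i), i.e.\ $(F_0)_{11}=(F_0)_{22}$, and case (ii), i.e.\ $(F_0)_{33}=(F_0)_{44}$. Inspecting the first order coefficient matrix $A_1$ in the $\rho$-only modulation setting, given by \eqref{eq:A1_rho_modulation}, I observe that the upper-left and lower-right $2\times 2$ diagonal blocks of $A_1(t)$ are identically zero as matrix-valued functions of time. Consequently, for every Fourier mode $m \in \mathbb{Z}$, the entries $(A_1^{(m)})_{12}$, $(A_1^{(m)})_{21}$, $(A_1^{(m)})_{34}$, and $(A_1^{(m)})_{43}$ all vanish. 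This is, in essence, the same structural input that underlies Lemma \ref{lem:no_quad_pert_for_non-paired_FloExp_rho-setting}, now repurposed toward ruling out first order exceptional behaviour.

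Therefore, in case (i) both $(A_1^{(n_1-n_2)})_{12}$ and $(A_1^{(n_2-n_1)})_{21}$ vanish, which violates the exclusive-or condition in Theorem \ref{thm:criterion_for_first_order_exceptional_point}; the same reasoning at positions $(3,4)$ and $(4,3)$ rules out case (ii). Hence only cases (iii) and (iv) of Lemma \ref{lem:double_constant_order_Floquet_exponent} remain, which is exactly the claimed alternative for $F_0$. I do not anticipate any serious obstacle: the only subtlety is ensuring that the exclusive-or reading is correctly applied, which is what excludes the trivial situation where both off-diagonal entries vanish (in that situation the relevant $2\times 2$ block of $F_1$ is already diagonal, and no first order exceptional behaviour can arise from that configuration).
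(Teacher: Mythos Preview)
Your proposal is correct and follows essentially the same approach as the paper. The paper's proof invokes Lemma~\ref{lem:no_quad_pert_for_non-paired_FloExp_rho-setting} to rule out the constellations $(F_0)_{11}=(F_0)_{22}$ and $(F_0)_{33}=(F_0)_{44}$, whereas you extract the underlying structural input (the vanishing diagonal $2\times2$ blocks of $A_1$ in \eqref{eq:A1_rho_modulation}) directly and feed it into the exclusive-or criterion of Theorem~\ref{thm:criterion_for_first_order_exceptional_point}; these are the same argument in slightly different packaging.
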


    \begin{proof} At a first order asymptotic exceptional point the constant order Floquet exponent matrix needs to have a double eigenvalue.
        By Lemma \ref{lem:double_constant_order_Floquet_exponent}, the only cases are either $(F_0)_{11}=(F_0)_{22}$ or $(F_0)_{33}=(F_0)_{44}$ or the cases stated above. However, by Lemma \ref{lem:no_quad_pert_for_non-paired_FloExp_rho-setting} double constant order Floquet exponents of the form $(F_0)_{11}=(F_0)_{22}$ or $(F_0)_{33}=(F_0)_{44}$ cannot be perturbed linearly in the setting where only $\rho$ is modulated. Thus, they cannot lead to an exceptional point and at an exceptional point the Floquet exponent matrix always needs to be the form $(F_0)_{11}=(F_0)_{44}$  and $(F_0)_{22}=(F_0)_{33}$ or of the form $(F_0)_{11}=(F_0)_{33}$  and $(F_0)_{22}=(F_0)_{44}$.
    \end{proof}

    \begin{theorem}[Classification of first order asymptotic exceptional points when $\rho$ is modulated only]\label{thm:classification_1st_ord_exc_pt_rho-setting}
        In the setting of Theorem \ref{thm:EriksMasterEquation} and Lemma \ref{lem:double_constant_order_Floquet_exponent}, suppose that only $1/\rho_i$ with $i=1,2$ is modulated and that $1/\kappa_i$ for $i=1,2$ is not modulated. Then the following equivalent statements hold:
        \begin{enumerate}[(i)]
            \item The system ${dx}/{dt} = A_\epsilon(t) x$ is posed at a first order asymptotic exceptional point.\label{thm:statement_art_1}
            \item The capacitance matrix $C$ takes the form
            \begin{align*}
                C = \begin{pmatrix}
                    C_{11} &0\\
                    0 &C_{22}
                \end{pmatrix}
            \end{align*} with $C_{11} \not= C_{22}$ and one of the following cases applies:
            \begin{enumerate}[(a)]
                \item The constant order Floquet exponents take the form $(F_0)_{11} = (F_0)_{33}$ and $(F_0)_{22} = (F_0)_{44}$  
                    and the modulation frequency $\Omega$ is given by \begin{align}
                        \Omega = \frac{1}{n}\sqrt{\frac{\delta}{|D_1|}}\left|\sqrt{C_{11}}-\sqrt{C_{22}}\right| &\text{ for some } n \in \mathbb{N}-\{0\}.
                        \end{align} Furthermore, the modulation $\eta_i$ with $i = 1,2$ has to satisfy $\eta^{(n)}_1 \not= \eta^{(n)}_2$ or $\eta^{(-n)}_1 \not= \eta^{(-n)}_2$.
                \item The constant order Floquet exponents take the form $(F_0)_{11} = (F_0)_{44}$ and $(F_0)_{22} = (F_0)_{33}$ 
                    and the modulation frequency $\Omega$ is given by \begin{align}
                        \Omega = \frac{1}{n}\sqrt{\frac{\delta}{|D_1|}}\left(\sqrt{C_{11}}+\sqrt{C_{22}}\right) &\text{ for some } n \in \mathbb{N}-\{0\}.
                        \end{align} Furthermore, the modulation $\eta_i$ with $i = 1,2$ has to satisfy $\eta^{(n)}_1 \not= \eta^{(n)}_2$ or $\eta^{(-n)}_1 \not= \eta^{(-n)}_2$.
            \end{enumerate}
        \end{enumerate}
    \end{theorem}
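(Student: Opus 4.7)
The plan is to prove the equivalence by establishing each direction separately, relying heavily on the structural lemmas just established.

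For the forward implication, I would first invoke Lemma~\ref{lem:rho_exceptional_pt_properties} together with the subsequent non-degeneracy lemma to conclude that $C_{12}=0$, $C_{11}\ne C_{22}$, and that $A_0$ has the diagonal form displayed there. Lemma~\ref{lem:except_pt_rho-setting_F0_cases} then dichotomises the coincidence pattern of the constant-order Floquet exponents into the two cases \emph{(a)} and \emph{(b)}. To extract $\Omega$ in each case, I would impose the folding condition $(A_0)_{kk}-(A_0)_{ll}=\i n\Omega$ for some $n\in\Z\setminus\{0\}$ on the relevant coincident pair, substituting the entries of $A_0$ with $\alpha=|C_{11}-C_{22}|$. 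This yields $\Omega=\tfrac{1}{n}\sqrt{\delta/|D_1|}\,|\sqrt{C_{11}}-\sqrt{C_{22}}|$ in case \emph{(a)} (folding across the same sign of square root) and $\Omega=\tfrac{1}{n}\sqrt{\delta/|D_1|}(\sqrt{C_{11}}+\sqrt{C_{22}})$ in case \emph{(b)} (folding across opposite signs). Finally, the EP criterion of Theorem~\ref{thm:criterion_for_first_order_exceptional_point}, applied to a coincident pair $(F_0)_{kk}=(F_0)_{ll}$, demands that not both of $(A_1^{(n_k-n_l)})_{kl}$ and $(A_1^{(n_l-n_k)})_{lk}$ vanish. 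Inspection of \eqref{eq:A1_rho_modulation} under $C_{12}=0$ (assuming without loss of generality $C_{11}>C_{22}$) shows that the coefficient $C_{11}-C_{22}-\alpha$ is zero and hence the first two rows of $A_1$ vanish identically; the required non-vanishing of the transposed entry therefore translates directly into the stated Fourier condition on $\eta_1,\eta_2$.

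For the reverse implication, assume the capacitance matrix and modulation frequency are as stated. With $C_{12}=0$ and $\alpha=|C_{11}-C_{22}|$, formula \eqref{eq:A0_2Res} reduces $A_0$ to the diagonal form in Lemma~\ref{lem:rho_exceptional_pt_properties}\emph{(ii)}, and the prescribed $\Omega$ manifestly realises, in case \emph{(a)}, the simultaneous coincidences $(F_0)_{11}=(F_0)_{33}$ and $(F_0)_{22}=(F_0)_{44}$, both with resonant frequency $\pm n$. The first two rows of $A_1$ in \eqref{eq:A1_rho_modulation} vanish identically (again taking $C_{11}>C_{22}$ WLOG), so $(A_1^{(m)})_{13}$ and $(A_1^{(m)})_{24}$ are identically zero, while $(A_1^{(m)})_{31}$ and $(A_1^{(m)})_{42}$ are non-zero scalar multiples of $\eta_1^{(m)}-\eta_2^{(m)}$. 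By Theorem~\ref{First_order_Floquet_matrix_and_Lyapunov_transformation} the corresponding entries of $F_1$ inherit this pattern at the folded indices $\pm n$, so the hypothesis $\eta_1^{(n)}\ne\eta_2^{(n)}$ or $\eta_1^{(-n)}\ne\eta_2^{(-n)}$ renders at least one of the two $2\times 2$ diagonal blocks of $F_0+\epsilon F_1$ a non-zero strictly lower-triangular matrix; hence $F_0+\epsilon F_1$ is non-diagonalisable for every $\epsilon>0$, which is exactly Definition~\ref{def:aEP} of a first order asymptotic exceptional point. Case \emph{(b)} follows by the identical argument with the index pairs $(1,4),(2,3)$ replacing $(1,3),(2,4)$.

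The main technical obstacle I anticipate is the careful bookkeeping of folding numbers and signs: the $4\times 4$ structure of $A_0$ and the two possible orderings $C_{11}\gtrless C_{22}$ make it easy to mis-identify which Fourier index of $\eta_1-\eta_2$ controls a given entry of $F_1$. A secondary subtlety is that for real-valued modulations the two disjuncts $\eta_1^{(\pm n)}\ne\eta_2^{(\pm n)}$ are equivalent by complex-conjugation symmetry; nonetheless writing the condition as a disjunction is natural because each disjunct corresponds to the non-degeneracy of a \emph{different} $2\times 2$ block of $F_0+\epsilon F_1$, and it is the correct formulation once one allows complex-valued modulations.
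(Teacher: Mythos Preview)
Your proposal is correct and follows essentially the same route as the paper's proof: invoke Lemmas~\ref{lem:rho_exceptional_pt_properties} and~\ref{lem:except_pt_rho-setting_F0_cases} to reduce to the two coincidence patterns, read off $\Omega$ from the folding condition, and then apply the criterion of Theorem~\ref{thm:criterion_for_first_order_exceptional_point} together with the block structure of \eqref{eq:A1_rho_modulation} under $C_{12}=0$ to translate the exclusive-or condition into the Fourier inequality on $\eta_1-\eta_2$. Your reverse direction is in fact spelled out more explicitly than the paper's (which simply cites Theorem~\ref{thm:criterion_for_first_order_exceptional_point}); the only minor imprecision is your phrase ``not both \ldots\ vanish'' for the EP criterion---the criterion is an exclusive or---but since you immediately observe that one entry vanishes automatically, the logic is unaffected.
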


    \begin{proof} Assume first that the system is posed at a first order exceptional point. Then, using Lemma \ref{lem:rho_exceptional_pt_properties}, it follows that the capacitance matrix is diagonal and that $C_{11}\not=C_{22}$. By Lemma \ref{lem:except_pt_rho-setting_F0_cases}, it follows that the constant order Floquet exponent matrix $F_0$ is either of the form $(F_0)_{11}=(F_0)_{33}$  and $(F_0)_{22}=(F_0)_{44}$ or of the form $(F_0)_{11}=(F_0)_{44}$  and $(F_0)_{22}=(F_0)_{33}$. In the first case, that is, in the case where $(F_0)_{11}=(F_0)_{33}$  and $(F_0)_{22}=(F_0)_{44}$, it holds that
        \begin{align}
            \abs{(A_0)_{11}-(A_0)_{33}} = \abs{(A_0)_{22}-(A_0)_{44}} = n \Omega \text{ for some } n \in \mathbb{N}
        \end{align}
        and thus $\Omega$ is given by
            $$\Omega = \frac{1}{n}\sqrt{\frac{\delta}{|D_1|}}\left|\sqrt{C_{11}}-\sqrt{C_{22}}\right|.$$
        Depending on whether $C_{11}> C_{22}$ or $C_{11} < C_{22}$, the first order coefficient matrix $A_1$ is given by 
        \begin{align*}
            A_1 = \begin{cases}
                \ds \frac{\i\sqrt{\delta} (\eta_1-\eta_2)}{2 \sqrt{|D_1|} }\begin{pmatrix}
    0 & 0 & 0 & 0 \\
    0 & 0 & 0 & 0 \\
    -\frac{C_{11}-C_{22}}{ \sqrt{ C_{11}}} & \frac{C_{11}-C_{22}}{ \sqrt{ C_{11}}} & 0 & 0 \\
    -\frac{C_{11}-C_{22}}{ \sqrt{ C_{11}}} & \frac{C_{11}-C_{22}}{ \sqrt{ C_{11}}} & 0 & 0 
    \end{pmatrix} &\text{ if } C_{11}> C_{22},\\
    \\
   \ds  \frac{\i\sqrt{\delta} (\eta_1-\eta_2)}{2 \sqrt{|D_1|}}\begin{pmatrix}
        0 & 0 & -\frac{C_{22}-C_{11}}{ \sqrt{C_{11}}} & \frac{C_{22}-C_{11}}{ \sqrt{C_{11}}} \\
        \nm 
        0 & 0 &  -\frac{C_{22}-C_{11}}{ \sqrt{C_{11}}} & \frac{C_{22}-C_{11}}{ \sqrt{C_{11}}} \\
        \nm
        0 & 0 & 0 & 0 \\
        0 & 0 & 0 & 0 
        \end{pmatrix} &\text{ if } C_{11}< C_{22}.
            \end{cases}
        \end{align*}
        By Theorem \ref{thm:criterion_for_first_order_exceptional_point}, one needs to compute $(A_1^{(n_i -n_j)})_{ij}$, where $i$ and $j$ are the coordinates of the corresponding double Floquet exponent and $\pm(n_i- n_j)$ the corresponding resonant frequencies. When $C_{11}>C_{22}$, then $n_1 - n_3 = -n$ and $n_2 - n_4 = n$. Thus, the corresponding entries of the first order Floquet exponent matrix are given by
        \begin{align*}
            &\begin{pmatrix}
                (F_0)_{11}&0\\
                0 &(F_0)_{33}
            \end{pmatrix}
                \,-\,\epsilon\frac{\i\sqrt{\delta} (\eta^{(n)}_1-\eta^{(n)}_2)}{2 \sqrt{|D_1|} }\frac{C_{11}-C_{22}}{ \sqrt{ C_{11}}}\begin{pmatrix}
                0 &0\\
                1 &0\\
            \end{pmatrix},
            \end{align*}
            and 
            \begin{align*}
            &\begin{pmatrix}
                (F_0)_{22}&0\\
                0 &(F_0)_{44}
            \end{pmatrix}
                \,+\,\epsilon\frac{\i\sqrt{\delta} (\eta^{(-n)}_1-\eta^{(-n)}_2)}{2 \sqrt{|D_1|} }\frac{C_{11}-C_{22}}{ \sqrt{ C_{11}}}\begin{pmatrix}
                0 &0\\
                1 &0\\
            \end{pmatrix}.
        \end{align*}
        Hence, in order to have a first order asymptotic exceptional point, one needs either $\eta^{(n)}_1 \not = \eta^{(n)}_2$ or $\eta^{(-n)}_1 \not = \eta^{(-n)}_2$. The other cases are handled by the same procedure.
        On the contrary, suppose that $\Omega,\,\eta_1$ and $\eta_2$ satisfy one of the above cases, then Theorem \ref{thm:criterion_for_first_order_exceptional_point} states that the system is posed at a first order asymptotic exceptional point.
    \end{proof}

    Analyzing further the two cases in Theorem \ref{thm:classification_1st_ord_exc_pt_rho-setting} for first order asymptotic exceptional points in the case where only $\rho$ is modulated and taking into account the formula for the corresponding first order coefficient matrix $A_1$ given by  \eqref{eq:A1_rho_modulation}, one obtains the following asymptotics. Assuming first that $C_{11} > C_{22}$, then $\alpha = C_{11}-C_{22}$. In the case where $F_0$ takes the form $(F_0)_{11} = (F_0)_{33}$ and $(F_0)_{22} = (F_0)_{44}$ and in the case where $F_0$ takes the form $(F_0)_{11} = (F_0)_{44}$ and $(F_0)_{22} = (F_0)_{33}$, one has
    \begin{align*}
        &\begin{pmatrix}
            (F_0)_{11} &0\\
            0 &(F_0)_{11}
        \end{pmatrix} -
        \epsilon\frac{\i\sqrt{\delta} (\eta_1^{(n)}-\eta_2^{(n)})}{2 \sqrt{ |D_1|} }\frac{C_{11}-C_{22}}{ \sqrt{ C_{11}}}\begin{pmatrix}
            0 & 0\\
            1 &0
        \end{pmatrix},\\
        &\begin{pmatrix}
            (F_0)_{22} &0\\
            0 &(F_0)_{22}
        \end{pmatrix} +
        \epsilon\frac{\i\sqrt{\delta} (\eta_1^{(-n)}-\eta_2^{(-n)})}{2 \sqrt{ |D_1|} }\frac{C_{11}-C_{22}}{ \sqrt{ C_{11}}}\begin{pmatrix}
            0 & 0\\
            1 &0
        \end{pmatrix}.
    \end{align*}
    When $C_{22} > C_{11}$, then $\alpha = C_{22}-C_{11}$ and the first order asymptotic exceptional points take the form
    \begin{align*}
        &\begin{pmatrix}
            (F_0)_{11} &0\\
            0 &(F_0)_{33}
        \end{pmatrix} -
        \epsilon\frac{\i\sqrt{\delta} (\eta_1^{(-n)}-\eta_2^{(-n)})}{2 \sqrt{ |D_1|}}\frac{C_{11}-C_{22}}{ \sqrt{ C_{22}}}\begin{pmatrix}
            0 &1 \\
            0 &0
        \end{pmatrix},\\
        &\begin{pmatrix}
            (F_0)_{22} &0\\
            0 &(F_0)_{44}
        \end{pmatrix} +
        \epsilon\frac{\i\sqrt{\delta} (\eta_1^{(n)}-\eta_2^{(n)})}{2 \sqrt{ |D_1|} }\frac{C_{11}-C_{22}}{ \sqrt{ C_{22}}}\begin{pmatrix}
            0 &1 \\
            0 &0
        \end{pmatrix}
    \end{align*}
    and
    \begin{align*}
        &\begin{pmatrix}
            (F_0)_{11} &0\\
            0 &(F_0)_{44}
        \end{pmatrix} +
        \epsilon\frac{\i\sqrt{\delta} (\eta_1^{(-n)}-\eta_2^{(-n)})}{2 \sqrt{ |D_1|} }\frac{C_{11}-C_{22}}{ \sqrt{ C_{22}}}\begin{pmatrix}
            0 &1 \\
            0 &0
        \end{pmatrix},\\
        &\begin{pmatrix}
            (F_0)_{22} &0\\
            0 &(F_0)_{33}
        \end{pmatrix} -
        \epsilon\frac{\i\sqrt{\delta} (\eta_1^{(n)}-\eta_2^{(n)})}{2 \sqrt{ |D_1|} }\frac{C_{11}-C_{22}}{ \sqrt{ C_{22}}}\begin{pmatrix}
            0 &1 \\
            0 &0
        \end{pmatrix},
    \end{align*}
    when $F_0$ takes the form $(F_0)_{11} = (F_0)_{33}$ and $(F_0)_{22} = (F_0)_{44}$ or when $F_0$ takes the form $(F_0)_{11} = (F_0)_{44}$ and $(F_0)_{22} = (F_0)_{33}$, respectively.

\subsubsection{Exceptional points when only the bulk moduli are modulated}
   
The case where only the bulk moduli  are modulated is similar to the case where only the densities are modulated. The following lemma holds.  

    \begin{lemma}\label{lem:except_pt_kappa-setting_F0_cases}
        In the setting of Theorem \ref{thm:EriksMasterEquation} and Lemma \ref{lem:double_constant_order_Floquet_exponent}, suppose that only $1/\kappa_i$ with $i=1,2$ is modulated and that $1/\rho_i$ for $i=1,2$ is not modulated. If ${dx}/{dt} = A_\epsilon(t) x$ is posed at an exceptional point, then the constant order Floquet exponent matrix $F_0$ is either of the form $(F_0)_{11}=(F_0)_{44}$  and $(F_0)_{22}=(F_0)_{33}$ or of the form $(F_0)_{11}=(F_0)_{33}$  and $(F_0)_{22}=(F_0)_{44}$.
    \end{lemma}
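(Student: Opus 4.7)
The plan is to use Lemma~\ref{lem:double_constant_order_Floquet_exponent} to reduce to the four possible double-eigenvalue configurations of $F_0$, namely (i) $(F_0)_{11}=(F_0)_{22}$, (ii) $(F_0)_{33}=(F_0)_{44}$, (iii) $(F_0)_{11}=(F_0)_{33}$ together with $(F_0)_{22}=(F_0)_{44}$, and (iv) $(F_0)_{11}=(F_0)_{44}$ together with $(F_0)_{22}=(F_0)_{33}$. The lemma then amounts to ruling out cases (i) and (ii), which I would do by invoking Theorem~\ref{thm:criterion_for_first_order_exceptional_point} under the (physically natural) assumption that each $\gamma_i$ has vanishing mean, so that $A_1^{(0)}=0$.

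First I would examine the upper-left $2\times 2$ block of $A_1$ relevant to case~(i). By~\eqref{eq:A1+B1_kappa_modulation}--\eqref{eq:B1_kappa_modulation} this block equals $A_{11}(t)+B_{11}(t)$, and both $A_{11}$ and $B_{11}$ carry the same rank-one matrix pattern $\bigl(\begin{smallmatrix} 1 & -1\\ 1 & -1\end{smallmatrix}\bigr)$ multiplied by scalar functions of $t$ that are linear in $\gamma_1,\gamma_2$ and $\ddot{\gamma}_1,\ddot{\gamma}_2$, respectively. Hence one can write
\[
A_{11}(t)+B_{11}(t)=s(t)\begin{pmatrix} 1 & -1\\ 1 & -1\end{pmatrix},
\]
so that $(A_1^{(m)})_{12}=-s^{(m)}$ and $(A_1^{(m)})_{21}=s^{(m)}$ for every Fourier index $m$.

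Next I would observe that each prefactor appearing in~\eqref{eq:A1_kappa_modulation} and~\eqref{eq:B1_kappa_modulation} is of the form $\i\cdot(\text{real})$. Combined with the reality of the physical modulations $\gamma_i$, this forces $s(t)=\i r(t)$ for a real-valued $r$, so that the reality relation $s^{(-m)}=-\overline{s^{(m)}}$ holds for every $m$. Writing $\pm n=\pm(n_1-n_2)$ for the resonant frequencies of the double Floquet exponent, Theorem~\ref{thm:criterion_for_first_order_exceptional_point} requires exactly one of $(A_1^{(-n)})_{12}=-s^{(-n)}$ or $(A_1^{(n)})_{21}=s^{(n)}$ to vanish. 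The reality relation forces $s^{(-n)}=0$ if and only if $s^{(n)}=0$, so the exclusive-or criterion can never be satisfied, ruling out case~(i). Case~(ii) is handled identically using the lower-right block $A_{22}+B_{22}$, which has the same rank-one structure and the same purely imaginary prefactors.

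The main obstacle I anticipate is recognizing that both contributions $A_{11}$ and $B_{11}$ share the common rank-one pattern (so their sum retains that pattern) and then carefully verifying that every coupling prefactor is purely imaginary; once this bookkeeping is done, the simultaneous vanishing of $s^{(\pm n)}$ is immediate from the reality of the modulations, and the lemma drops out of Theorem~\ref{thm:criterion_for_first_order_exceptional_point}.
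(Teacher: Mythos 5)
Your proof is correct and takes essentially the same route as the paper: reduce to the four configurations of Lemma~\ref{lem:double_constant_order_Floquet_exponent}, then rule out $(F_0)_{11}=(F_0)_{22}$ and $(F_0)_{33}=(F_0)_{44}$ by applying Theorem~\ref{thm:criterion_for_first_order_exceptional_point} to the diagonal $2\times 2$ blocks $A_{ii}+B_{ii}$. The paper's own proof is extremely terse here, only asserting that "its off-diagonal entries can never be equal to zero independently"; you have filled in precisely the two facts that make this assertion work --- the common rank-one pattern $\left(\begin{smallmatrix}1&-1\\1&-1\end{smallmatrix}\right)$ with a purely imaginary scalar prefactor $s(t)=\i r(t)$, $r$ real --- and correctly used the reality of $\gamma_1,\gamma_2$ to get the conjugation relation $s^{(-m)}=-\overline{s^{(m)}}$, so that $(A_1^{(n_1-n_2)})_{12}$ and $(A_1^{(n_2-n_1)})_{21}$ vanish simultaneously, defeating the exclusive-or criterion. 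This reality argument is indeed the one the paper relies on implicitly (it is made explicit only in the proof of Lemma~\ref{lem:kappa-modulation_exceptional_pt_C12}), and your observation that the argument runs through the two Fourier coefficients at \emph{opposite} frequencies $\pm(n_1-n_2)$, rather than the time-domain entries, is exactly the right point to be careful about.
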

    
    \begin{proof}
        As in the proof of Lemma \ref{lem:except_pt_rho-setting_F0_cases}, the only possible cases for the eigenvalues of $F_0$ are either $(F_0)_{11}=(F_0)_{22}$ or $(F_0)_{33}=(F_0)_{44}$ or the cases stated above. However, the structure of the first order coefficient matrix $A_1 + B_1$ in the case where only $1/\kappa_i$ for $i =1,2$ is modulated yields that its off-diagonal entries can never be equal to zero independently. Thus, in the case of an exceptional point, $F_0$ has to be of the form  $(F_0)_{11}=(F_0)_{44}$  and $(F_0)_{22}=(F_0)_{33}$ or of the form $(F_0)_{11}=(F_0)_{33}$  and $(F_0)_{22}=(F_0)_{44}$.
    \end{proof}

    \begin{lemma}\label{lem:kappa-modulation_exceptional_pt_C12}
        In the setting of Theorem \ref{thm:EriksMasterEquation} and Lemma \ref{lem:double_constant_order_Floquet_exponent}, supposing that only $1/\kappa_i$ with $i=1,2$ is modulated and that $1/\rho_i$ for $i=1,2$ is not modulated, then it holds that if ${d\Phi}/{dt} = A_\epsilon(t) \Phi$ is posed at a first order asymptotic exceptional point, then $C_{12}=0$.        
    \end{lemma}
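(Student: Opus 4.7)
The plan is to apply Theorem \ref{thm:criterion_for_first_order_exceptional_point} to the total first order coefficient matrix $A_1 + B_1$ given by \eqref{eq:A1+B1_kappa_modulation}, exactly as in the proof of Lemma \ref{lem:rho-modulation_exceptional_pt_C12}. By Lemma \ref{lem:except_pt_kappa-setting_F0_cases}, a first order asymptotic exceptional point in this setting can only arise from a double constant order Floquet exponent of the form (iii) or (iv) of Lemma \ref{lem:double_constant_order_Floquet_exponent}. The criterion then demands that, at the resonant frequency $m_{\mathrm{res}}$ associated with such a pair $(i,j)$, exactly one of $(A_1+B_1)_{ij}^{(n_i-n_j)}$ and $(A_1+B_1)_{ji}^{(n_j-n_i)}$ vanishes.

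The first step is to establish a factorization of every relevant off-diagonal entry. Exploiting the rank-one pattern of the four blocks $A_{12}, A_{21}, B_{12}, B_{21}$, together with the identity
\[
\frac{C_{11}-C_{22}\mp\alpha}{\alpha\sqrt{C_{11}+C_{22}\mp\alpha}} = \mp\,\frac{\alpha\mp(C_{11}-C_{22})}{\alpha\sqrt{C_{11}+C_{22}\mp\alpha}},
\]
one checks that every such entry takes the form
\[
(A_1+B_1)_{ij}^{(m)} = \pm\,\i\,G_{ij}\,(\gamma_1^{(m)}-\gamma_2^{(m)})\,K(m),
\]
where $G_{ij}$ is proportional to $\alpha - (C_{11}-C_{22})$ when the entry lies in $A_{12}$ or $B_{12}$ and to $\alpha + (C_{11}-C_{22})$ otherwise, and where the kinematic factor $K(m) = -\frac{\sqrt{\delta}(C_{11}+C_{22})}{4\sqrt{2|D_1|}} + \frac{m^2\Omega^2}{8}\sqrt{\frac{|D_1|}{2\delta}}$ depends only on $m^2$.

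The second step is to verify that $K(m_{\mathrm{res}}) \neq 0$. From the condition $(F_0)_{ii} = (F_0)_{jj}$ and the identity $(C_{11}+C_{22})^2 - \alpha^2 = 4\det C$, one computes $m_{\mathrm{res}}^2\Omega^2 = \frac{\delta}{|D_1|}[(C_{11}+C_{22}) \mp 2\sqrt{\det C}]$ in cases (iii) and (iv) respectively. Substituting shows that $K(m_{\mathrm{res}})$ is proportional to $-(C_{11}+C_{22}) \mp 2\sqrt{\det C}$, which is nonzero outside the degenerate regime $C_{11}=C_{22}$, $C_{12}=0$ by AM-GM. Since the physical modulations $\gamma_i$ are real-valued, the Fourier coefficients satisfy $(\gamma_1-\gamma_2)^{(-m)} = \overline{(\gamma_1-\gamma_2)^{(m)}}$, so the Fourier factors appearing in the two relevant entries vanish simultaneously. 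The exclusive-or in Theorem \ref{thm:criterion_for_first_order_exceptional_point} thus forces exactly one of the geometric factors $\alpha \mp (C_{11}-C_{22})$ to vanish, which combined with $\alpha^2 = (C_{11}-C_{22})^2 + 4|C_{12}|^2$ yields $|C_{12}|^2 = 0$, as desired.

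The main obstacle is careful sign bookkeeping across the eight pattern positions in the four blocks in order to verify the claimed factorization uniformly; once this is done the rest of the argument is a short algebraic computation. The degenerate regime $C_{11}=C_{22}$, $C_{12}=0$ must be handled separately, but there the relevant off-diagonal entries of $A_1+B_1$ already vanish identically, so no first order asymptotic exceptional point can arise from that regime.
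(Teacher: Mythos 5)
Your proposal is correct and follows essentially the same route as the paper: the decisive observation in both is that, because $\gamma_1,\gamma_2$ are real-valued, $(\gamma_1-\gamma_2)^{(-m)}=\overline{(\gamma_1-\gamma_2)^{(m)}}$, so the Fourier factors in the two relevant entries of $A_1+B_1$ vanish simultaneously and cannot by themselves realize the exclusive-or of Theorem~\ref{thm:criterion_for_first_order_exceptional_point}. The paper phrases this as a contradiction argument — assume $C_{12}\neq 0$, cancel the nonzero geometric prefactors, and observe that the two resulting Fourier-side conditions are conjugate and hence hold together or not at all — whereas you argue directly that the XOR therefore forces exactly one of the geometric factors $\alpha\mp(C_{11}-C_{22})$ to vanish, which yields $\alpha^2=(C_{11}-C_{22})^2$ and hence $C_{12}=0$. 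Your explicit verification that the shared kinematic factor $K(m_{\mathrm{res}})\propto -(C_{11}+C_{22})\mp 2\sqrt{\det C}$ is nonzero is accurate, but is logically redundant: if $K(m_{\mathrm{res}})=0$ both entries vanish and the XOR fails already, so being at an asymptotic exceptional point automatically enforces $K(m_{\mathrm{res}})\neq 0$ (and likewise a nonzero Fourier factor). Your closing remark about handling the degenerate regime $C_{11}=C_{22},\,C_{12}=0$ separately is also unnecessary, since in that case $\alpha=0$ and the irrationality hypothesis~\eqref{fact5.5} of Lemma~\ref{lem:double_constant_order_Floquet_exponent} already fails, so that regime lies outside the lemma's hypotheses.
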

    Thus the only possibility to create first order asymptotic exceptional points in the setting where $C_{12}\not=0$ and under the assumptions of Lemma \ref{lem:double_constant_order_Floquet_exponent}, that is, under the assumption
    $$\frac{\sqrt{C_{11} + C_{22} + \alpha}-\sqrt{C_{11} + C_{22} - \alpha}}{\sqrt{C_{11} + C_{22} + \alpha}+\sqrt{C_{11} + C_{22} - \alpha}} \not\in \mathbb{Q},$$
     is to modulate both the bulk modulus $\kappa$ \emph{and} the density $\rho$ simultaneously.

    \begin{proof}
        Suppose by contradiction that $C_{12}\not=0$. Then, in the case of a first order asymptotic exceptional point with resonant frequency $n = n_1 -n_2$, $n = n_1 - n_3$, $n = n_2 -n_3$ or $n = n_2 -n_4$, one needs to have either $(A_{12}+B_{12})^{(n)}= 0$, that is,
        \begin{align*}
         \ds   \frac{\i\sqrt{\delta}}{4\sqrt{2|D_1|}} \frac{\alpha-(C_{11}-C_{22})}{\alpha \sqrt{C_{11}+C_{22}-\alpha}}
        (C_{11}+C_{22})\left(\gamma^{(n)}_1-\gamma^{(n)}_2\right) \\ \nm \qquad \ds = \i \sqrt{\frac{|D_1|}{2}} \frac{1}{4\sqrt{\delta}} \frac{C_{11}-C_{22}-\alpha}{\alpha\sqrt{C_{11}+C_{22}-\alpha}}\left(\Bigl(\frac{d^2\gamma_1}{dt^2}\Bigr)^{(n)}-\Bigl(\frac{d^2\gamma_2}{dt^2}\Bigr)^{(n)}\right),
        \end{align*}
        or $(A_{21}+B_{21})^{(-n)}= 0$, that is,
        \begin{align*}
           \ds \frac{\i\sqrt{\delta}}{ 4 \sqrt{2 |D_1| } }\frac{\alpha+C_{11}-C_{22}}{ \alpha \sqrt{\alpha+C_{11}+C_{22}}}
        (C_{11}+C_{22})\left(\gamma^{(-n)}_1-\gamma^{(-n)}_2\right) \\ 
        \nm \qquad \ds = -\i\sqrt{\frac{|D_1|}{2}} \frac{1}{4\sqrt{\delta}} \frac{C_{11}-C_{22}+\alpha}{\alpha\sqrt{C_{11}+C_{22}+\alpha}}\left(\Bigl(\frac{d^2\gamma_1}{dt^2}\Bigr)^{(-n)}-\Bigl(\frac{d^2\gamma_2}{dt^2}\Bigr)^{(-n)}\right).
        \end{align*}
        In the case where $C_{12}\not=0$,  the two equations reduce to either
        \begin{align*}
            -\frac{\delta}{|D_1|}
        (C_{11}+C_{22})\left(\gamma^{(n)}_1-\gamma^{(n)}_2\right) = \Bigl(\frac{d^2\gamma_1}{dt^2}\Bigr)^{(n)}-\Bigl(\frac{d^2\gamma_2}{dt^2}\Bigr)^{(n)}
        \end{align*}
        or
        \begin{align*}
            -\frac{\delta}{|D_1|}
        (C_{11}+C_{22})\left(\gamma^{(-n)}_1-\gamma^{(-n)}_2\right) = \Bigl(\frac{d^2\gamma_1}{dt^2}\Bigr)^{(-n)}-\Bigl(\frac{d^2\gamma_2}{dt^2}\Bigr)^{(-n)},
        \end{align*}
        respectively. However, in the case of a real-valued perturbation $\gamma_i$ with $i = 1,2$, the Fourier series coefficients $\gamma_i^{(n)}$ and $\gamma_i^{(-n)}$ need to be conjugate to each other and thus the above equations cannot be fulfilled independently. This is a contradiction and hence we conclude that $C_{12}=0$.
    \end{proof}

    \begin{lemma}
        In the setting of Theorem \ref{thm:EriksMasterEquation}, supposing that the number of resonators is equal to two and that only $1/\kappa_i$ with $i=1,2$ is modulated and $1/\rho_i$ for $i=1,2$ is not modulated, then it holds that if $C_{12}=0$ and $C_{11} \not= C_{22}$, the off-diagonal blocks $A_{12} + B_{12}$ and $A_{21} + B_{21}$ of the first order coefficient matrix of the system ${d\Phi}/{dt} = A_\epsilon(t) \Phi$ are given by 
        \begin{align*}
            A_{12} + B_{12} &= \begin{cases}
                \begin{pmatrix}
                    0 &0\\
                    0 &0
                \end{pmatrix} \text{ if } C_{11} > C_{22},\\
                \\ \ds
                \frac{\i}{4}\left[\frac{\sqrt{\delta}}{\sqrt{|D_1|}} \frac{C_{11}+C_{22}}{\sqrt{C_{11}}}
                (\gamma_1-\gamma_2) + \frac{\sqrt{|D_1|}}{\sqrt{\delta}} \frac{1}{\sqrt{C_{11}}}\left(\frac{d^2\gamma_1}{dt^2}-\frac{d^2\gamma_2}{dt^2}\right)\right]\begin{pmatrix} 
                    -1 &1\\
                    -1 &1
                    \end{pmatrix} \\ 
                    \qquad \text{ if } C_{22} > C_{11},
                \end{cases}\\
                \\
            A_{21} + B_{21} &= \begin{cases} \ds
                \frac{\i}{4}\left[\frac{\sqrt{\delta}}{\sqrt{|D_1|}}\frac{C_{11}+C_{22}}{\sqrt{C_{11}}}(\gamma_1-\gamma_2) + \frac{\sqrt{|D_1|}}{\sqrt{\delta}}\frac{1}{\sqrt{C_{11}}}\left(\frac{d^2\gamma_1}{dt^2}-\frac{d^2\gamma_2}{dt^2}\right)\right]\begin{pmatrix} 
                    -1 & 1\\
                    -1 &1
                    \end{pmatrix} \\
                    \qquad \text{ if } C_{11} > C_{22},\\
                    \\
                    \begin{pmatrix}
                        0 &0\\
                        0 &0
                    \end{pmatrix} \text{ if } C_{22} > C_{11}.
                \end{cases}
        \end{align*}
    \end{lemma}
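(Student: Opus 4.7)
The plan is to deduce the formulas by direct algebraic substitution into \eqref{eq:A1_kappa_modulation} and \eqref{eq:B1_kappa_modulation}. Under the hypothesis $C_{12}=0$, the auxiliary quantity collapses to
\[\alpha = \sqrt{(C_{11}-C_{22})^2 + 4\abs{C_{12}}^2} = \abs{C_{11}-C_{22}},\]
so the sign of $C_{11}-C_{22}$ controls which numerators in the prefactors of \eqref{eq:A1_kappa_modulation}--\eqref{eq:B1_kappa_modulation} vanish. I would therefore split into the two exhaustive cases $C_{11}>C_{22}$ and $C_{22}>C_{11}$ (allowed because $C_{11}\not=C_{22}$ is assumed) and treat each by direct computation.

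In the case $C_{11}>C_{22}$ we have $\alpha = C_{11}-C_{22}$, whence the numerator $\alpha-(C_{11}-C_{22})$ of $A_{12}$ and the numerator $C_{11}-C_{22}-\alpha$ of $B_{12}$ both vanish, giving $A_{12}+B_{12}=0$. For the $(2,1)$-block I would substitute $\alpha+(C_{11}-C_{22})=2(C_{11}-C_{22})$ and $C_{11}+C_{22}+\alpha=2C_{11}$, cancel the common factor $(C_{11}-C_{22})$, and then rewrite the matrix $\begin{pmatrix}1&-1\\1&-1\end{pmatrix}$ appearing in $B_{21}$ as $-\begin{pmatrix}-1&1\\-1&1\end{pmatrix}$ so that the overall minus sign in front of $B_{21}$ is absorbed. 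The $A$- and $B$-contributions then combine coherently and reproduce the displayed expression for $A_{21}+B_{21}$.

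The case $C_{22}>C_{11}$ is entirely symmetric: now $\alpha=-(C_{11}-C_{22})$, so the numerators $\alpha+(C_{11}-C_{22})$ of $A_{21}$ and $C_{11}-C_{22}+\alpha$ of $B_{21}$ both vanish, yielding $A_{21}+B_{21}=0$; while the nonzero block $A_{12}+B_{12}$ is obtained by the analogous simplifications $\alpha-(C_{11}-C_{22})=2\alpha$ and $C_{11}+C_{22}-\alpha=2C_{11}$, followed by the same absorption of the constant sign matrix.

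No part of the argument is conceptually difficult once one notices that $\alpha^2=(C_{11}-C_{22})^2$ automatically forces one member of each pair of prefactors to be identically zero. The only obstacle is careful sign-and-square-root bookkeeping, in particular keeping track of the two distinct constant sign-matrices $\begin{pmatrix}1&-1\\1&-1\end{pmatrix}$ and $\begin{pmatrix}-1&1\\-1&1\end{pmatrix}$ so that the $A$- and $B$-contributions add with the correct relative sign rather than cancel.
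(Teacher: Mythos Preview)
Your proposal is correct and follows exactly the paper's own approach, which is simply direct substitution of $C_{12}=0$ (hence $\alpha=\lvert C_{11}-C_{22}\rvert$) into equations \eqref{eq:A1_kappa_modulation} and \eqref{eq:B1_kappa_modulation}. You have supplied more algebraic detail than the paper, whose proof is the single sentence ``This is obtained from equations \eqref{eq:A1_kappa_modulation} and \eqref{eq:B1_kappa_modulation},'' but the route is identical.
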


    \begin{proof}
        This is obtained from equations \eqref{eq:A1_kappa_modulation} and \eqref{eq:B1_kappa_modulation}.
    \end{proof}

    \begin{theorem}[Classification of first order asymptotic exceptional points when only $\kappa$ is modulated]\label{thm:classification_1st_ord_exc_pt_kappa-setting}
        In the setting of Theorem \ref{thm:EriksMasterEquation} and Lemma \ref{lem:double_constant_order_Floquet_exponent}, suppose that only $1/\kappa_i$ with $i=1,2$ is modulated with no constant component and that $1/\rho_i$ for $i=1,2$ is not modulated. Then, the following equivalent statements hold:
        \begin{enumerate}[(i)]
            \item The system ${dx}/{dt} = A_\epsilon(t) x$ is posed at a first order asymptotic exceptional point.
            \item The capacitance matrix $C$ takes the form
            \begin{align*}
                C = \begin{pmatrix}
                    C_{11} &0\\
                    0 &C_{22}
                \end{pmatrix}
            \end{align*} with $C_{11} \not= C_{22}$ and one of the following cases applies:
            \begin{enumerate}[(a)]
                \item The constant order Floquet exponents take the form $(F_0)_{11} = (F_0)_{33}$ and $(F_0)_{22} = (F_0)_{44}$  
                    and the modulation frequency $\Omega$ is given by \begin{align}
                        \Omega = \frac{1}{n}\sqrt{\frac{\delta}{|D_1|}}\left|\sqrt{C_{11}}-\sqrt{C_{22}}\right| &\text{ for some } n \in \mathbb{N}-\{0\}.
                        \end{align} Furthermore, the modulation $\eta_i$ with $i = 1,2$ has to satisfy either $\eta^{(n)}_1 \not= \eta^{(n)}_2$ or $\eta^{(-n)}_1 \not= \eta^{(-n)}_2$ and the resonant frequency $n$ is such that $n \not= \sqrt{\frac{\delta}{|D_1|}
                        (C_{11}+C_{22})}$.
                \item The constant order Floquet exponents take the form $(F_0)_{11} = (F_0)_{44}$ and $(F_0)_{22} = (F_0)_{33}$ 
                    and the modulation frequency $\Omega$ is given by \begin{align}
                        \Omega = \frac{1}{n}\sqrt{\frac{\delta}{|D_1|}}(\sqrt{C_{11}}+\sqrt{C_{22}}) &\text{ for some } n \in \mathbb{N}-\{0\}.
                        \end{align} Furthermore, the modulation $\eta_i$ with $i = 1,2$ has to satisfy either $\eta^{(n)}_1 \not= \eta^{(n)}_2$ or $\eta^{(-n)}_1 \not= \eta^{(-n)}_2$ and the resonant frequency $n$ is such that $n \not= \sqrt{\frac{\delta}{|D_1|}
                        (C_{11}+C_{22})}$.
            \end{enumerate}
        \end{enumerate}
    \end{theorem}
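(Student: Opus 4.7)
The proof is going to mirror that of \Cref{thm:classification_1st_ord_exc_pt_rho-setting}, the principal difference being that here the first order coefficient matrix is $A_1+B_1$ from \eqref{eq:A1+B1_kappa_modulation}, which combines the geometric contribution from $W_1CW_2$ with the dispersive contribution from $W_3$. The plan is to first deduce the structural constraints on $C$, $F_0$ and $\Omega$ (necessity) by sequentially invoking the criterion of \Cref{thm:criterion_for_first_order_exceptional_point} together with \Cref{lem:kappa-modulation_exceptional_pt_C12} and \Cref{lem:except_pt_kappa-setting_F0_cases}, and then read off the condition on the Fourier coefficients of $\gamma_i$ from the explicit expressions for $A_{12}+B_{12}$ and $A_{21}+B_{21}$ provided by the lemma immediately preceding the theorem.

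For the forward direction, assume that ${d\Phi}/{dt} = A_\epsilon(t)\Phi$ is posed at a first order asymptotic exceptional point. \Cref{lem:kappa-modulation_exceptional_pt_C12} forces $C_{12}=0$. One must also have $C_{11}\neq C_{22}$: indeed, when $C_{12}=0$ and $C_{11}=C_{22}$, a direct inspection of \eqref{eq:A1_kappa_modulation}--\eqref{eq:B1_kappa_modulation} shows that every block of $A_1+B_1$ vanishes, so no linear perturbation of the degenerate leading order exponents is possible. \Cref{lem:except_pt_kappa-setting_F0_cases} then restricts $F_0$ to either the pairing in case (a) or the pairing in case (b). In each case, the folding identity $(A_0)_{ii}-(A_0)_{jj}\in \i\Omega\mathbb{Z}$ applied to the matching pair, using the explicit form of $A_0$ in \eqref{eq:A0_2Res} specialized to $C_{12}=0$, yields precisely the two stated formulas for $\Omega$.

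To extract the condition on the Fourier coefficients $\gamma_i^{(\pm n)}$, one uses the explicit formulas for the off-diagonal blocks when $C_{12}=0$ and $C_{11}\neq C_{22}$ from the preceding lemma. Suppose first $C_{11}>C_{22}$; then $A_{12}+B_{12}\equiv 0$, so one of the two critical entries appearing in \Cref{thm:criterion_for_first_order_exceptional_point} is automatically zero, and the exceptional-point criterion reduces to the requirement that the corresponding entry of $(A_{21}+B_{21})^{(\pm n)}$ does \emph{not} vanish. Using $(d^2\gamma_i/dt^2)^{(m)}=-m^2\Omega^2\gamma_i^{(m)}$, this entry factors as
\begin{align*}
(A_{21}+B_{21})^{(\pm n)} \;\propto\; \left(\frac{\sqrt{\delta}\,(C_{11}+C_{22})}{\sqrt{|D_1|}}-\frac{\sqrt{|D_1|}\,n^{2}\Omega^{2}}{\sqrt{\delta}}\right)\bigl(\gamma_1^{(\pm n)}-\gamma_2^{(\pm n)}\bigr),
\end{align*}
which is nonzero precisely when $\gamma_1^{(\pm n)}\neq \gamma_2^{(\pm n)}$ \emph{and} $n^{2}\neq \delta(C_{11}+C_{22})/|D_1|$. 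The complementary subcase $C_{11}<C_{22}$ is entirely symmetric, with the two off-diagonal blocks swapping roles. The converse direction is then immediate: under the listed conditions, exactly one of the two critical entries of $(A_1+B_1)^{(\pm n)}$ vanishes while the other does not, so the restriction of $F_0+\epsilon F_1$ to the two-dimensional eigenspace of the double constant order Floquet exponent is a nontrivial Jordan block, and \Cref{thm:criterion_for_first_order_exceptional_point} concludes.

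The main obstacle, and the reason the statement is strictly stronger than its analogue for the density modulated case, is the exclusion $n\neq \sqrt{\delta(C_{11}+C_{22})/|D_1|}$. In the density modulated setting the first order coefficient matrix depends only on $\gamma$-like terms with no second derivatives, so no such dispersion correction arises; here however $A_1$ and $B_1$ contribute additively to the same Fourier mode but with opposite signs in the bracket above, and can conspire to cancel exactly on the indicated critical curve. Tracking this cancellation carefully, and being mindful that the subcase distinction $C_{11}>C_{22}$ versus $C_{11}<C_{22}$ flips which of the two blocks is identically zero, are the only places where the argument departs from that of \Cref{thm:classification_1st_ord_exc_pt_rho-setting}.
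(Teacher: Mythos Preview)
Your proposal is correct and follows essentially the same route as the paper, whose proof is the single line ``The proof is analogous to the proof of \Cref{thm:classification_1st_ord_exc_pt_rho-setting}.'' You have spelled out that analogy faithfully, correctly identifying the one new feature: the $W_3$ contribution $B_1$ introduces the second-derivative term, whose $m$th Fourier coefficient carries a factor $-m^2\Omega^2$, which is what produces the additional exclusion on $n$ absent from the density-modulated case.
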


    \begin{proof}
        The proof is analogous to the proof of Theorem \ref{thm:classification_1st_ord_exc_pt_rho-setting}.
    \end{proof}

    \subsubsection{Exceptional points when both the densities and the bulk moduli are modulated}
    In this section, we seek to classify first order asymptotic exceptional points in the case where both the densities and the bulk moduli are modulated. This case is fundamentally different to the case where either the density or the bulk modulus is modulated. In fact, the capacitance matrix needs no longer to be off-diagonal, that is, the condition $C_{12}\not = 0$ is no longer necessary. Furthermore, the possibility of a first order asymptotic exceptional point depends mainly on the proportion of the modulation $\gamma$ of the bulk moduli to the modulation $\eta$ of the densities. Indeed, in the case of a first order asymptotic exceptional point, their ratio is determined by the capacitance matrix entries, that is, by the geometry of the corresponding dimer of subwavelength resonators. This will become clearer in Theorem \ref{thm:classificiation_EP_rho_kappa}. The following lemma will be of use in the proof of Theorem \ref{thm:classificiation_EP_rho_kappa}. 

    \begin{lemma}\label{lem:except_pt_kappa-rho-setting_F0_cases}
        In the setting of Theorem \ref{thm:EriksMasterEquation} and Lemma \ref{lem:double_constant_order_Floquet_exponent}, if the linear system of ODEs ${dx}/{dt} = A_\epsilon(t) x$ is posed at a first order asymptotic exceptional point, then the constant order Floquet exponent matrix $F_0$ is either of the form $(F_0)_{11}=(F_0)_{44}$  and $(F_0)_{22}=(F_0)_{33}$ or of the form $(F_0)_{11}=(F_0)_{33}$  and $(F_0)_{22}=(F_0)_{44}$.
    \end{lemma}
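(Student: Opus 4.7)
The plan is to invoke Lemma \ref{lem:double_constant_order_Floquet_exponent}, which restricts the double constant order Floquet exponents to four possible constellations, and rule out the two ``unpaired'' ones $(F_0)_{11}=(F_0)_{22}$ and $(F_0)_{33}=(F_0)_{44}$, leaving precisely the two cases in the statement.

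I will treat only the case $(F_0)_{11}=(F_0)_{22}$; the case $(F_0)_{33}=(F_0)_{44}$ is identical, with the upper-left $2\times 2$ block of $A_1$ replaced by the lower-right one. Assuming, as elsewhere in the section, that the modulations have zero mean so that $A_1^{(0)}=0$, the first-order effective matrix for this double eigenvalue furnished by Lemma \ref{lemma:linear_perturbation_of_double_eigenvalue} reduces to $\bigl(\begin{smallmatrix} (F_0)_{11} & a \\ b & (F_0)_{11} \end{smallmatrix}\bigr)$, where $a=(A_1^{(m)})_{12}$ and $b=(A_1^{(-m)})_{21}$ with $m=n_1-n_2$ the resonant frequency. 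It then suffices to show that this $2\times 2$ effective matrix is always diagonalizable.

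The heart of the argument is to unpack $a$ and $b$. By \eqref{eq:A1_rho_modulation}, the $\rho$-modulation contributes zero to both $(A_1)_{12}$ and $(A_1)_{21}$, since its upper-left $2\times 2$ block vanishes. So $a$ and $b$ come solely from the $\kappa$-modulation block $A_{11}+B_{11}$. Reading off \eqref{eq:A1_kappa_modulation} and \eqref{eq:B1_kappa_modulation} and using that the geometric prefactors involving $C_{ij},\alpha$ are real and the modulations $\gamma_1,\gamma_2$ are real-valued, one sees that $A_{11}(t)+B_{11}(t) = \i\,\phi(t)\bigl(\begin{smallmatrix}1 & -1 \\ 1 & -1\end{smallmatrix}\bigr)$ for a real-valued function $\phi$ built as a real-coefficient linear combination of $\gamma_1,\gamma_2,\ddot\gamma_1,\ddot\gamma_2$. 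Hence $(A_1)_{12}(t)=-\i\phi(t)$ and $(A_1)_{21}(t)=\i\phi(t)$, and reality of $\phi$ gives $\phi^{(-m)}=\overline{\phi^{(m)}}$, so $a=-\i\phi^{(m)}$ and $b=\i\overline{\phi^{(m)}}$, whence $ab=|\phi^{(m)}|^2\ge 0$.

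The eigenvalues of the effective matrix are thus $(F_0)_{11}\pm\sqrt{ab}=(F_0)_{11}\pm|\phi^{(m)}|$: either two distinct eigenvalues (if $\phi^{(m)}\neq 0$) or a scalar matrix $(F_0)_{11}\,\Id$ (if $\phi^{(m)}=0$). In both cases $F_0+\epsilon F_1$ is diagonalizable to first order, so no first order asymptotic exceptional point can occur in this constellation. The main obstacle, such as it is, lies in observing that the $\bigl(\begin{smallmatrix}1&-1\\1&-1\end{smallmatrix}\bigr)$ structure of $A_{11}+B_{11}$ combined with reality of the modulations forces $ab\ge 0$ rather than leaving it a generic complex number; once this is noted, ruling out the two unpaired constellations is routine bookkeeping.
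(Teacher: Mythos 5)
Your proposal is correct and follows essentially the same route as the paper: invoke Lemma~\ref{lem:double_constant_order_Floquet_exponent} to restrict to four mutually exclusive constellations, and rule out $(F_0)_{11}=(F_0)_{22}$ and $(F_0)_{33}=(F_0)_{44}$ by showing that the diagonal blocks of $A_1$ (which come solely from the $\kappa$-modulation, since the $\rho$-modulation contributes zero there) have the structure $\i\,\phi(t)\bigl(\begin{smallmatrix}1&-1\\1&-1\end{smallmatrix}\bigr)$ with $\phi$ real, forcing the two cross entries $(A_1^{(m)})_{12}$ and $(A_1^{(-m)})_{21}$ to vanish only simultaneously. The paper's own proof just states this as ``its off-diagonal entries can never be equal to zero independently'' (citing the proof of Lemma~\ref{lem:except_pt_kappa-setting_F0_cases}); you unpack that observation explicitly, which is a helpful but not substantively different exposition.
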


    \begin{proof}
        The proof immediately follows from the proof of Lemma \ref{lem:except_pt_kappa-setting_F0_cases}, since the diagonal blocks of the first order coefficient matrix $A_1$ coincide with the diagonal blocks of the first order coefficient matrix $A_1$ of the setting of Lemma \ref{lem:except_pt_kappa-setting_F0_cases}. Thus, the proof of Lemma \ref{lem:except_pt_kappa-setting_F0_cases} also applies in the setting where $\rho$ and $\kappa$ are modulated and hence, Lemma \ref{lem:except_pt_kappa-rho-setting_F0_cases} follows.
    \end{proof}

    \begin{theorem}[Classification of exceptional points when $\rho$ and $\kappa$ 
    are modulated]\label{thm:classificiation_EP_rho_kappa}
        In the setting of Theorem \ref{thm:EriksMasterEquation} and Lemma \ref{lem:double_constant_order_Floquet_exponent}, using the notation of Section \ref{sec:2ResCase} and supposing that $C_{12} \not= 0$, then the following statements are equivalent:
        \begin{enumerate}[(i)]
            \item The system ${dx}/{dt} = A_\epsilon(t) x$ is posed at a first order asymptotic exceptional point.
            \item One of the following cases is satisfied:
                \begin{enumerate}[(a)]
                    \item The constant order Floquet exponent matrix $F_0$ is of the form $(F_0)_{11}=(F_0)_{33}$  and $(F_0)_{22}=(F_0)_{44}$, that is, $\Omega$ is given by 
                        \begin{align*}
                            \Omega = \frac{1}{n}\frac{\sqrt{\delta}}{\sqrt{2|D_1|}}\left(\sqrt{C_{11} + C_{22} + \alpha} - \sqrt{C_{11} + C_{22} - \alpha}\right) \text{ with } n \in \mathbb{N}
                        \end{align*}
                        and the modulation of $\kappa$ and $\rho$ satisfy either
                        \begin{align*}
                            -\frac{C_{11}+C_{22}-4\sqrt{C_{11}C_{22}-\abs{C_{12}}^2}}{2\sqrt{(C_{11}-C_{22})^2+4\abs{C_{12}}}}&(\gamma_1-\gamma_2)^{(n)} = (\eta_1-\eta_2)^{(n)}\\
                            \text{ or }\\
                            \frac{C_{11}+C_{22}-4\sqrt{C_{11}C_{22}-\abs{C_{12}}^2}}{2\sqrt{(C_{11}-C_{22})^2+4\abs{C_{12}}}}&(\gamma_1-\gamma_2)^{(n)} =(\eta_1-\eta_2)^{(n)}.
                        \end{align*}
                    \item The constant order Floquet exponent matrix $F_0$ is of the form $(F_0)_{11}=(F_0)_{44}$  and $(F_0)_{22}=(F_0)_{33}$, that is, $\Omega$ is given by 
                    \begin{align*}
                        \Omega = \frac{1}{n}\frac{\sqrt{\delta}}{\sqrt{2|D_1|}}\left(\sqrt{C_{11} + C_{22} + \alpha} + \sqrt{C_{11} + C_{22} - \alpha}\right) \text{ with } n \in \mathbb{N}
                    \end{align*}
                        and the modulation of $\kappa$ and $\rho$ satisfy either
                        \begin{align*}
                            -\frac{C_{11}+C_{22}+4\sqrt{C_{11}C_{22}-\abs{C_{12}}^2}}{2\sqrt{(C_{11}-C_{22})^2+4\abs{C_{12}}}}&(\gamma_1-\gamma_2)^{(n)} = (\eta_1-\eta_2)^{(n)}\\
                            \text{ or }\\
                            \frac{C_{11}+C_{22}+4\sqrt{C_{11}C_{22}-\abs{C_{12}}^2}}{2\sqrt{(C_{11}-C_{22})^2+4\abs{C_{12}}}}&(\gamma_1-\gamma_2)^{(n)} =(\eta_1-\eta_2)^{(n)}.
                        \end{align*}
                \end{enumerate}
        \end{enumerate}
    \end{theorem}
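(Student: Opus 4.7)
The plan is to combine Theorem~\ref{thm:criterion_for_first_order_exceptional_point} with the explicit expressions for $A_0$ and $A_1$ derived in Section~\ref{sec:2ResCase}. When both $\rho$ and $\kappa$ are modulated, $A_1$ is the sum of the density contribution~\eqref{eq:A1_rho_modulation} and the bulk modulus contribution~\eqref{eq:A1+B1_kappa_modulation}; the new feature compared with Theorems~\ref{thm:classification_1st_ord_exc_pt_rho-setting} and~\ref{thm:classification_1st_ord_exc_pt_kappa-setting} is that these two contributions can cancel in the relevant off-diagonal entry even when $C_{12}\neq 0$, which is precisely what the claimed ratios encode.

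First, Lemma~\ref{lem:except_pt_kappa-rho-setting_F0_cases} restricts the admissible structures of $F_0$ at a first order asymptotic exceptional point to the two cases (a) and (b) of the statement. In each case the folding condition $(A_0)_{ii}-(A_0)_{jj}=n\i\Omega$ read off from~\eqref{eq:A0_2Res} determines $\Omega$ to the prescribed value. Since the ODE has real coefficients, Lemma~\ref{conjugate} ensures that the paired double Floquet exponent inherits the same asymptotic structure, so the vanishing criterion only has to be imposed on the off-diagonal block corresponding to one of the two double eigenvalues.

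Next, Theorem~\ref{thm:criterion_for_first_order_exceptional_point} reduces the problem in case (a) to the vanishing of either $(A_1^{(n)})_{13}$ or $(A_1^{(-n)})_{31}$. Combining \eqref{eq:A1_rho_modulation} with $(A_{12}+B_{12})_{11}$ from \eqref{eq:A1_kappa_modulation}--\eqref{eq:B1_kappa_modulation}, and using the Fourier identity $(\partial_t^2 \gamma_j)^{(n)}=-(\Omega n)^2\gamma_j^{(n)}$, I would write $(A_1^{(n)})_{13}$ as an explicit linear combination of $(\eta_1-\eta_2)^{(n)}$ and $(\gamma_1-\gamma_2)^{(n)}$. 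The central algebraic step is to use $(C_{11}+C_{22})^2-\alpha^2=4(C_{11}C_{22}-|C_{12}|^2)$ together with the value of $\Omega$ to rewrite
\begin{align*}
(\Omega n)^2 = \frac{\delta}{|D_1|}\Bigl(C_{11}+C_{22}\mp 2\sqrt{C_{11}C_{22}-|C_{12}|^2}\Bigr)
\end{align*}
in cases (a) and (b) respectively. This aligns the $B$-contribution (which carries $1/\sqrt{\delta}$) with the $A$- and density contributions (which carry $\sqrt{\delta}$) at the same order in $\delta$. Setting the combined coefficient to zero then produces a linear relation of the form $c_{\pm}(\gamma_1-\gamma_2)^{(n)}=(\eta_1-\eta_2)^{(n)}$, the two signs corresponding respectively to the vanishing of the $(1,3)$ and the $(3,1)$ entry. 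Case (b) is obtained by flipping the sign of $\alpha$ throughout, and the reverse implication follows by reading the same computation backwards and invoking Theorem~\ref{thm:criterion_for_first_order_exceptional_point}.

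The main obstacle will be the algebraic bookkeeping: collecting prefactors of the form $(C_{11}-C_{22}\pm\alpha)/(\alpha\sqrt{C_{11}+C_{22}\mp\alpha})$ coming from the four pieces $A_{12}$, $B_{12}$, $A_{21}$, $B_{21}$ and the density block, and then reducing them, via repeated use of $\alpha^2=(C_{11}-C_{22})^2+4|C_{12}|^2$ and $\sqrt{(C_{11}+C_{22})^2-\alpha^2}=2\sqrt{C_{11}C_{22}-|C_{12}|^2}$, to the symmetric expression $(C_{11}+C_{22}\mp 4\sqrt{C_{11}C_{22}-|C_{12}|^2})/(2\alpha)$ that appears in the stated relations. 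Once this simplification is carried out, the equivalence of the two statements is immediate from Theorem~\ref{thm:criterion_for_first_order_exceptional_point}.
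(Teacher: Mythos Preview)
Your proposal is correct and follows essentially the same approach as the paper's own proof: invoke Lemma~\ref{lem:except_pt_kappa-rho-setting_F0_cases} to reduce to the two folding configurations, apply the vanishing criterion of Theorem~\ref{thm:criterion_for_first_order_exceptional_point} to the relevant off-diagonal entries of the combined $A_1$, substitute the value of $\Omega$, and simplify. Your emphasis on the identity $(\Omega n)^2=\tfrac{\delta}{|D_1|}\bigl(C_{11}+C_{22}\mp 2\sqrt{C_{11}C_{22}-|C_{12}|^2}\bigr)$ and on the alignment of the $\sqrt{\delta}$ and $1/\sqrt{\delta}$ contributions makes explicit the key algebraic step that the paper records only as ``reducing the equations further''; the appeal to Lemma~\ref{conjugate} is a reasonable shortcut for the symmetry the paper handles by writing out both the upper and lower block equations (which are complex conjugates up to a sign).
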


    \begin{proof}
        When the system is posed at a first order asymptotic exceptional point and since the modulation of $\kappa$ has no constant component, one of the off-diagonal blocks of the first order coefficient matrix $A_1$ needs to be equal to zero. This precisely corresponds to the following equations:
        \begin{align*}
            \frac{1}{2\alpha}\bigg(\frac{\sqrt{\delta}}{\sqrt{|D_1|}}
    (C_{11}+C_{22})(\gamma_1-\gamma_2)^{(n)}&+\frac{\sqrt{|D_1|}}{\sqrt{\delta}}\Bigl(\frac{d^2\gamma_1}{dt^2}-\frac{d^2\gamma_2}{dt^2}\Bigr)^{(n)} \bigg) = \frac{\sqrt{\delta}}{\sqrt{|D_1|}}(\eta_1-\eta_2)^{(n)}\\
    &\text{or }\\
    \frac{1}{2\alpha}\bigg(\frac{\sqrt{\delta}}{\sqrt{|D_1|}}
    (C_{11}+C_{22})(\gamma_1-\gamma_2)^{(-n)}&+\frac{\sqrt{|D_1|}}{\sqrt{\delta}}\Bigl(\frac{d^2\gamma_1}{dt^2}-\frac{d^2\gamma_2}{dt^2}\Bigr)^{(-n)}\bigg) = -\frac{\sqrt{\delta}}{\sqrt{|D_1|}}(\eta_1-\eta_2)^{(-n)},
        \end{align*}
        where the first equation corresponds to the upper diagonal block and the lower equation corresponds to the lower diagonal block. Denoting $(\gamma_1-\gamma_2)^{(n)}$ by $g$ and $(\eta_1-\eta_2)^{(n)}$ by $h$, one obtains the following equations:
        \begin{align*}
            \frac{1}{2\alpha}\bigg(\frac{\sqrt{\delta}}{\sqrt{|D_1|}}
    (C_{11}+C_{22})&-\frac{\sqrt{|D_1|}}{\sqrt{\delta}}n^2\Omega^2\bigg)g = \frac{\sqrt{\delta}}{\sqrt{|D_1|}}h\\
    &\text{or }\\
    \frac{1}{2\alpha}\bigg(\frac{\sqrt{\delta}}{\sqrt{|D_1|}}
    (C_{11}+C_{22})&-\frac{\sqrt{|D_1|}}{\sqrt{\delta}}n^2\Omega^2\bigg)\bar{g} = -\frac{\sqrt{\delta}}{\sqrt{|D_1|}}\bar{h},
        \end{align*}
        where $\bar{g}$ and $\bar{h}$ denote the complex conjugates of $g$ and $h$, respectively.
        Lemma \ref{lem:except_pt_kappa-rho-setting_F0_cases} implies that $\Omega$ needs to be of the form
        \begin{align*}
            \Omega = \frac{1}{n}\frac{\sqrt{\delta}}{\sqrt{2|D_1|}}\begin{cases}
                \sqrt{C_{11} + C_{22} + \alpha} &-\hspace{0.2cm} \sqrt{C_{11} + C_{22} - \alpha} \\ &\text{or}\\
                \sqrt{C_{11} + C_{22} + \alpha} &+\hspace{0.2cm} \sqrt{C_{11} + C_{22} - \alpha}.
            \end{cases}
        \end{align*}
        The first case corresponds to the setting where $(F_0)_{11}=(F_0)_{33}$  and $(F_0)_{22}=(F_0)_{44}$, while the second case corresponds to the setting where $(F_0)_{11}=(F_0)_{44}$  and $(F_0)_{22}=(F_0)_{33}$. Thus, one obtains for the equations corresponding to the upper block
        \begin{align*}
            \frac{1}{2\alpha}\Big(
    (C_{11}+C_{22})&-(\sqrt{C_{11} + C_{22} + \alpha} - \sqrt{C_{11} + C_{22} - \alpha})^2\Big)g = h \\
            \text{ and }\\
            \frac{1}{2\alpha}\Big(
            (C_{11}+C_{22})&-(\sqrt{C_{11} + C_{22} + \alpha} + \sqrt{C_{11} + C_{22} - \alpha})^2\Big)g = h
        \end{align*}
        and for the equations corresponding to the lower block, one correspondingly obtains
        \begin{align*}
            \frac{1}{2\alpha}\Big(
            (C_{11}+C_{22})&-(\sqrt{C_{11} + C_{22} + \alpha} - \sqrt{C_{11} + C_{22} - \alpha})^2\Big)\bar{g} = -\bar{h} \\
            \text{ and }\\
            \frac{1}{2\alpha}\Big(
            (C_{11}+C_{22})&-(\sqrt{C_{11} + C_{22} + \alpha} + \sqrt{C_{11} + C_{22} - \alpha})^2\Big)\bar{g} = -\bar{h}.
        \end{align*}
        Reducing the equations further, one obtains that in order to have an asymptotic exceptional point, the following two cases are possible.
        In the first case, when $F_0$ has the form $(F_0)_{11}=(F_0)_{33}$  and $(F_0)_{22}=(F_0)_{44}$, one needs either
        \begin{align*}
            -\frac{C_{11}+C_{22}-4\sqrt{C_{11}C_{22}-\abs{C_{12}}^2}}{2\sqrt{(C_{11}-C_{22})^2+4\abs{C_{12}}}}g &= h &\text{ for a zero upper block }\\
            \text{ or }\\
            \frac{C_{11}+C_{22}-4\sqrt{C_{11}C_{22}-\abs{C_{12}}^2}}{2\sqrt{(C_{11}-C_{22})^2+4\abs{C_{12}}}}g &=h &\text{ for a zero lower block.}
        \end{align*}
        In the second case, when $F_0$ has the form $(F_0)_{11}=(F_0)_{44}$  and $(F_0)_{22}=(F_0)_{33}$, one needs either
        \begin{align*}
            -\frac{C_{11}+C_{22}+4\sqrt{C_{11}C_{22}-\abs{C_{12}}^2}}{2\sqrt{(C_{11}-C_{22})^2+4\abs{C_{12}}}}g &= h &\text{ for a zero upper block }\\
            \text{ or }\\
            \frac{C_{11}+C_{22}+4\sqrt{C_{11}C_{22}-\abs{C_{12}}^2}}{2\sqrt{(C_{11}-C_{22})^2+4\abs{C_{12}}}}g &=h &\text{ for a zero lower block.}
        \end{align*}
        This proves one direction of the theorem. The other direction is an application of Theorem \ref{thm:criterion_for_first_order_exceptional_point}.
    \end{proof}

\section{Concluding remarks} \label{concluding_remarks}

This paper presented a new analysis to obtain an asymptotic approximation of  Floquet's exponent matrix and Floquet exponents associated to an ODE of the form $$\frac{dx}{dt} = A_\epsilon(t)x,$$ where 
$A_\epsilon(t) =  A_0 + \epsilon A_1(t) + \epsilon^2A_2(t) + \ldots$
is an analytic function of $\epsilon$ in a neighborhood of $0$ with $A_0$ being diagonal and constant in time, and $A_n(t)$ having a finite Fourier series for all $n \geq 1$. This analysis for an asymptotic approximation of  Floquet's exponent matrix was presented in Theorem \ref{Inductive_Identity_for_Lyapunov-Floquet_decomposition} and provides a method to asymptotically approximate the fundamental solution of an ODE of the above form (Section \ref{sec:asymp_exp_of_fund_sol}). Furthermore, it laid the groundwork for an asymptotic analysis of Floquet exponents and exceptional points. It was proven that, generically, Floquet exponents are perturbed quadratically when the first order coefficient matrix $A_1$ has no constant component in its Fourier series (Lemma \ref{lem:single_Floquet_exponents_are_perturbed_quadratically}). Linear perturbation is only possible when $A_1$ has a constant component or at a double constant order Floquet exponent. However, a double constant order Floquet exponent is not sufficient for a linear perturbation, but the system needs to be perturbed with the so-called resonant frequency of the double constant order Floquet exponent (Lemma \ref{lemma:linear_perturbation_of_double_eigenvalue}). Thus, in order to obtain linear perturbation of constant order Floquet exponents, a double constant order Floquet exponent needs to be created via folding of the system through certain choices of the modulation period $T$ and then via the modulation in the resonant frequency. The precise procedure was presented in Section \ref{F1_when_F0_has_double_eigenvalue}. There, the procedure was also illustrated with the example of the classical harmonic oscillator, which was introduced in Section \ref{Harmonic_oscillator}.

The study of exceptional points in the case of Floquet metamaterials was the main motivation for the study of asymptotic Floquet theory. 
Indeed, the derived asymptotic expansions allowed for a characterization of first order asymptotic exceptional points.
 This characterization lead to a deep analysis of the occurrence of asymptotic exceptional points in the setting of the classical harmonic oscillator and in the setting of  a dimer of time-modulated subwavelength resonators. 
 In the case of time-modulated subwavelength resonators, two fundamentally different behaviors were observed.
 When either the density $\rho$ of the resonators or the bulk modulus $\kappa$ of the resonators are perturbed,  first order exceptional points only occur if the associated capacitance matrix of the dimer is diagonal, see respectively Theorem \ref{thm:classification_1st_ord_exc_pt_rho-setting} and Theorem \ref{thm:classification_1st_ord_exc_pt_kappa-setting}.
 However, in the case where the density $\rho$ of the resonators \emph{and} the bulk modulus $\kappa$ are perturbed simultaneously, this restriction is no longer present.
 Indeed, Theorem \ref{thm:classificiation_EP_rho_kappa} states that exceptional points are obtained if and only if certain frequency components of the perturbations of $\rho$ and of $\kappa$ fulfill a certain ratio, which is determined by the capacitance matrix, that is, by the geometry of the system.  The asymptotic exceptional points observed here have a rather different origin compared to the ones observed in \cite{ammari2020time}, where the exceptional points emerge in a neighborhood of a degenerate point rather than the degenerate point itself.
 
From these results mentioned above, it follows that time-modulation of the resonators provides an uncountable number of first order asymptotic exceptional points for any dimer of subwavelength resonators.
This result can be considered as a first step in the classification of first order asymptotic exceptional points for Floquet metamaterials. The formulas for the Floquet's exponent matrix provide a tool to also analyze the occurrence of higher order asymptotic exceptional points. Furthermore, the procedures used in this paper lay a groundwork for the classification of first order exceptional points in the setting of  an \emph{arbitrary} finite number of time-modulated subwavelength resonators and are key for studying topological properties of Floquet metamaterials.

\bibliographystyle{abbrv}
\bibliography{references}
\end{document}